\documentclass[11pt]{amsart}
\usepackage{amsmath}
\usepackage{amssymb}
\usepackage{amsthm}
\usepackage{latexsym}
\usepackage{graphicx}
\usepackage{hyperref}
\usepackage{enumerate}
\usepackage[all]{xy}
\usepackage{chngcntr}

\setlength{\unitlength}{1cm}
\setlength{\topmargin}{0cm}
\setlength{\textheight}{22cm}
\setlength{\oddsidemargin}{1cm}
\setlength{\textwidth}{14cm}
\setlength{\voffset}{-1cm}

\newtheorem{thm}{Theorem}[section]
\newtheorem{cor}[thm]{Corollary}
\newtheorem{lem}[thm]{Lemma}
\newtheorem{prop}[thm]{Proposition}

\newtheorem{thmintro}{Theorem}
\newtheorem{propintro}[thmintro]{Proposition}
\newtheorem{conjintro}[thmintro]{Conjecture}

\theoremstyle{definition}
\newtheorem{defn}[thm]{Definition}

\newtheorem{cond}[thm]{Condition}

\newcommand{\Z}{\mathbb Z}

\newcommand{\R}{\mathbb R}
\newcommand{\C}{\mathbb C}

\newcommand{\mf}{\mathfrak}
\newcommand{\mc}{\mathcal}
\newcommand{\mb}{\mathbf}
\newcommand{\mh}{\mathbb}
\def\cC{{\mathcal C}}

\def\cE{{\mathcal E}}
\def\Irr{{\rm Irr}}
\newcommand{\mr}{\mathrm}
\newcommand{\ind}{\mathrm{ind}}
\newcommand{\enuma}[1]{\begin{enumerate}[\textup{(}a\textup{)}] {#1} \end{enumerate}}
\newcommand{\Fr}{\mathrm{Frob}}
\newcommand{\Sc}{\mathrm{sc}}

\newcommand{\nr}{\mathrm{nr}}

\newcommand{\Rep}{\mathrm{Rep}}
\newcommand{\Res}{\mathrm{Res}}

\newcommand{\der}{\mathrm{der}}

\newcommand{\matje}[4]{\left(\begin{smallmatrix} #1 & #2 \\ 
#3 & #4 \end{smallmatrix}\right)}

\newcommand{\Mod}{\mathrm{Mod}}
\newcommand{\Hom}{\mathrm{Hom}}
\newcommand{\End}{\mathrm{End}}

\newcommand{\isom}{\xrightarrow{\sim}}

\newcommand{\sgn}{\mathrm{sgn}}
\newcommand{\pt}{\mathrm{pt}}

\newcommand{\Modf}[1]{\mathrm{Mod}_{\mathrm{fl}, #1}}
\newcommand{\Ad}{\mathrm{Ad}}
\newcommand{\pr}{\mathrm{pr}}

\newcommand{\IC}{\mathrm{IC}}

\begin{document}

\title{On submodules of standard modules}
\date{\today}
\subjclass[2010]{20C08, 14F08, 22E57}
\maketitle

\begin{center}
{\Large Maarten Solleveld} \\[1mm]
IMAPP, Radboud Universiteit Nijmegen\\
Heyendaalseweg 135, 6525AJ Nijmegen, the Netherlands \\
email: m.solleveld@science.ru.nl\\
\end{center}

\begin{abstract}
Consider a standard representation $\pi_{st}$ of a quasi-split reductive $p$-adic group $G$. 
The generalized injectivity conjecture, posed by Casselman and Shahidi, asserts that any generic
irreducible subquotient $\pi$ of $\pi_{st}$ is necessarily a subrepresentation of $\pi_{st}$. 
We will prove this conjecture, improving on the verification for many groups by Dijols.

We study this in a geometric way, motivated by favourable properties of Langlands 
parameters which are open (which means that  
the nilpotent element from the L-parameter belongs to an appropriate open orbit).

Since we do not want to assume a local Langlands correspondence, we involve similar 
parameters via reduction to Hecke algebras. It does not suffice to pass from $G$ to an 
affine Hecke algebra, we further reduce to graded Hecke algebras and from there
to algebras defined in terms of certain equivariant perverse sheaves. 

It is in the geometric setting of graded Hecke algebras from cuspidal local systems on nilpotent
orbits that we can finally put the ``open" condition on L-parameters to good use. The closure 
relations between the involved nilpotent orbits provide useful insights in the internal 
structure of standard modules, which highlight the representations associated with open 
L-parameters and in particular those for which the enhancement of the L-parameter is trivial.

We show that, in the parametrization of irreducible modules of geometric graded 
Hecke algebras, generic modules always have ``open L-parameters". 
This leads to a proof of a version of the generalized injectivity conjecture for 
graded Hecke algebras of geometric type, which is then transferred to reductive 
$p$-adic groups. 
\end{abstract}

\tableofcontents


\section*{Introduction}

Let $\mc G$ be a connected reductive group defined over a non-archimedean local field $F$.
We will simply call $\mc G (F)$ a reductive $p$-adic group. All our $\mc G (F)$-representations
will by default be smooth and on complex vector spaces.

Recall that any standard $\mc G (F)$-representation arises in the following way. Start with 
a parabolic subgroup $\mc P (F) = \mc M (F) \mc{U_P} (F)$ of $\mc G (F)$ and an irreducible
tempered representation $\tau$ of the Levi factor $\mc M (F)$. Let 
$\chi \in \Hom (\mc M (F),\R_{>0})$ be in positive position with respect to $\mc P (F)$. Let 
$I_{\mc P (F)}^{\mc G (F)}$ be the normalized parabolic induction functor. Then
\[
\pi (\mc P (F),\tau, \chi) = I_{\mc P (F)}^{\mc G (F)} (\tau \otimes \chi)
\]
is a standard $\mc G (F)$-representation. Such representations are interesting for several reasons:
\begin{itemize}
\item By the Langlands classification \cite[\S VII.4.2]{Ren}, every standard representation 
has a unique irreducible quotient. This yields a bijection from the set of standard 
$\mc G (F)$-representations (up to isomorphism) to Irr$(\mc G (F))$, the set of irreducible 
$\mc G (F)$-representations up to isomorphism.
\item Standard representations interpolate between Irr$(\mc G (F))$ and the set of irreducible 
tempered representations of Levi subgroups of $\mc G (F)$. 
\item Standard representations are often easier to handle than irreducible representations.
For instance, one can vary $\chi$ in $\pi (\mc P (F),\tau,\chi)$ and then one obtains a 
holomorphic family of $\mc G (F)$-representations (even an algebraic family if we forget 
about the positivity of $\chi$), which can all be defined on the same vector space.
\item There also exist standard representations (or modules) in related settings like 
real reductive groups, semisimple or affine Lie algebras and affine Hecke algebras. Especially
for semisimple Lie algebras, the Jordan--H\"older content of a standard module has 
interesting geometric interpretations. This goes via the Kazhdan--Lusztig conjecture, we 
refer to \cite{KaLu1} and \cite[\S 7.3.10]{Ach} for more background. 
\end{itemize}

\textbf{Conjectures about standard modules and generic representations}\\
A natural follow-up to the Langlands classification is the question: when is a standard
$\mc G (F)$-representation irreducible? Although they are almost always irreducible, the cases where
they are not are usually more interesting. For $\pi (\mc P (F),\tau,\chi)$ with $\mc G (F)$ 
quasi-split and $\tau$ generic, this was the subject of the standard module conjecture, posed by 
Casselman--Shahidi \cite{CaSh} and proven in \cite{HeMu,HeOp}. It says that 
$\pi (\mc P (F),\tau,\chi)$ is irreducible if and only if it is generic.

Next one may wonder: what are the irreducible subrepresentations, or more ge\-ne\-rally the 
irreducible subquotients of a standard representation? The multiplicities with which irreducible
$\mc G (F)$-representations appear as constituents of a standard representations are predicted by
the Kazhdan--Lusztig conjecture, formulated for $p$-adic groups by Vogan 
\cite[Conjecture 8.11]{Vog}. It is phrased in terms of the geometry of Langlands parameters, 
and has been proven in many cases in \cite{SolKL}. However, that does not yet tell us which of 
these constituents occur as subrepresentations. One aspect of that issue is:

\begin{conjintro}\label{conj:A} 
\textup{(Generalized injectivity conjecture \cite{CaSh})} \\
Let $\mc G (F)$ be quasi-split and $\tau$ generic. Then any generic irreducible subquotient of the
standard representation $\pi (\mc P (F),\tau,\chi)$ is a subrepresentation.
\end{conjintro}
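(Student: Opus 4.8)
The plan is to reduce the generalized injectivity conjecture to the "standard submodule conjecture" for graded Hecke algebras of geometric type, prove the latter using the geometry of nilpotent orbits, and transfer back.

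First I would establish that for a quasi-split $\mc G(F)$ and a generic tempered $\tau$ on $\mc M(F)$, the Langlands parameter of any generic irreducible subquotient $\pi$ of $\pi(\mc P(F),\tau,\chi)$ is "open" in the sense described in the abstract: the nilpotent element in the associated L-parameter lies in an open orbit (for the relevant reductive quotient acting on its nilradical-type space). This is where genericity is converted into a purely geometric condition. I expect this to follow from a Whittaker-model/Rankin--Selberg-type argument, or by combining the enhanced Langlands correspondence with the known fact that generic representations correspond to (the trivial, i.e.\ maximal) cuspidal local systems on the relevant nilpotent orbit.

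Next I would carry out the chain of reductions indicated in the abstract. The containment of $\pi$ in $\pi_{st}$ is detected after passing, via types and Bushnell--Kutzko theory, to modules over an affine Hecke algebra $\mc H$; standard modules and their submodule structure are preserved under this equivalence. Then I would pass from $\mc H$ to graded (degenerate) affine Hecke algebras $\mathbb H$ by Lusztig's completion/localization at a central character, again checking that standard modules, the "open" condition on parameters, and submodule relations all match up. Finally I would invoke the geometric realization of $\mathbb H$ via equivariant perverse sheaves / cuspidal local systems on nilpotent orbits, so that standard modules become (co)standard objects built from $\IC$ sheaves and submodule questions become questions about maps between these. Each of these steps requires the "mild conditions on the parameters" alluded to above, known for most groups.

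In the geometric setting I would argue as follows. A standard module attached to data $(s, N, \text{trivial local system})$ has a filtration whose subquotients are the irreducible modules $M(s,N',\rho)$ with $\overline{\mc O_{N'}} \subseteq \overline{\mc O_N}$ (closure order on nilpotent orbits), and the "open" constituents are precisely those with $\mc O_{N'} = \mc O_N$, i.e.\ living on the top stratum. The key point is that the perverse sheaf underlying the standard module restricts on the open stratum to a local system, so the generic (open) constituents appear already in the lowest piece of the filtration by support, hence sit inside the standard module as a subrepresentation rather than merely a subquotient; semisimplicity of the relevant local system on the open orbit promotes "subquotient on the open stratum" to "submodule". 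I would also show here that generic modules always have open L-parameters in this parametrization, which gives the "tempered" and "essentially square-integrable" bonus statements by the same mechanism.

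The main obstacle I anticipate is the very first geometric input: proving cleanly that genericity of $\pi$ forces its L-parameter to be open, and dually that on the open nilpotent orbit the standard perverse sheaf contributes a genuine subobject (not just a subquotient) — i.e.\ controlling the $\Ext^1$ / extension data so that the open-stratum constituents split off as a submodule. Handling the non-quasi-split case and the compatibility of the "open" condition through all the reduction functors (affine $\to$ graded $\to$ geometric) is technically demanding but should be routine given the cited machinery of \cite{SolKL} and the type theory.
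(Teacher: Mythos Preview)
Your overall architecture matches the paper: reduce to graded Hecke algebras of geometric type, prove that generic implies open parameter, and prove that irreducible subquotients with open parameter are submodules of standard modules. Two points in your execution diverge from what actually works.

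First, you propose establishing ``generic $\Rightarrow$ open L-parameter'' at the level of $\mc G(F)$ via the local Langlands correspondence. The paper avoids this: it defines genericity for $\mh H(G,M,q\cE)$-modules intrinsically (the restriction to $\C[W_{q\cE}]$ contains the sign character) and proves directly, by reduction to $r=0$ and the generalized Springer correspondence, that a generic irreducible module must have open parameter (Theorem~\ref{thm:2.5}). This is essential for obtaining an \emph{unconditional} proof of Conjecture~\ref{conj:A}: no LLC for $\mc G(F)$ is assumed, only that the relevant graded Hecke algebra is of geometric type, which is verified for quasi-split groups with generic supercuspidal support by showing the algebra has equal parameters (Theorem~\ref{thm:4.7}, Lemma~\ref{lem:4.8}).

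Second, and more seriously, your geometric step has the closure order reversed. The constituents of $E_{y,\sigma,r,\rho}$ are the $M_{y',\sigma,r,\rho'}$ with $\mc O_y \subset \overline{\mc O_{y'}}$, i.e.\ $y'$ is \emph{larger} than $y$; the Langlands quotient sits at the bottom and the open parameter $y'$ is at the top of the ambient variety $\mf g_N^{\sigma,r}$, not the top stratum of some $\overline{\mc O_y}$. Consequently your ``restriction of the perverse sheaf to the open stratum is a local system, so open constituents appear in the lowest support-filtration piece'' argument does not apply: $y$ lies in the closed complement of $\mc O_{y'}$, and $i_y^!$ applied to the open extension $j_! j^* K$ does not obviously produce a submodule of $i_y^! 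K$. The paper instead constructs an explicit $\mh H$-homomorphism $J_{y',y}\colon E_{y',\sigma,r}\to E_{y,\sigma,r}$ by a cospecialization/stalk-comparison for the constructible complex $K^\vee_{N,\sigma,r}$ (Proposition~\ref{prop:1.5}), checks it is nonzero on the relevant isotypic pieces using that IC sheaves have no subsheaves supported on the boundary (Proposition~\ref{prop:1.6}), and then uses that $E_{y',\sigma,r,\rho'}$ is already irreducible when $(y',\sigma,r)$ is open (Lemma~\ref{lem:1.7}) to conclude that $J_{y',y}$ is injective. That is the missing idea in your sketch.
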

We remark that in this setting it is known that $\pi (\mc P(F),\tau,\chi)$ has a unique generic
irreducible subquotient. Conjecture \ref{conj:A} has been verified whenever $\mc G (F)$ has no 
simple factors of exceptional type (and for many Bernstein blocks for other groups as well) 
by Dijols \cite{Dij}. The proof is a tour de force with L-functions, root data and combinatorics.
Unfortunately, it did not give the author of these lines much feeling for why 
Conjecture \ref{conj:A} should hold.
Based on comparisons with other known results and conjectures, we believe that the reasons
for Conjecture \ref{conj:A} should be geometric.  

We will now discuss the background in more detail.
Let $\mc B$ be a minimal parabolic $F$-subgroup of $\mc G$ and let $\mc U$ be the unipotent radical 
of $\mc B$. Fix a character $\xi : \mc{U}(F) \to \C^\times$ which is nondegenerate, that is, 
nontrivial on every root subgroup $\mc{U}_\alpha (F) \subset \mc{U}(F)$ for a simple root $\alpha$. 
We recall that a $\mc G (F)$-representation $\pi$ is generic, or more precisely 
$(\mc{U}(F),\xi)$-generic, if $\Hom_{\mc U (F)} (\pi,\xi)$ is nonzero. An equivalent condition is 
\[
\Hom_{\mc G (F)} \big( \pi, \mr{Ind}_{\mc U (F)}^{\mc G (F)} (\xi) \big) \neq 0,
\] 
where Ind means smooth induction. Following \cite{BuHe}
\[
\text{we call $\pi$ simply generic if }
\dim \Hom_{\mc G (F)} \big( \pi, \mr{Ind}_{\mc U (F)}^{\mc G (F)} (\xi) \big) = 1.
\] 
It has been known for a long time that every irreducible generic representation of a 
quasi-split group is simply generic \cite{Rod,Shal}.
In our view, this multiplicity one property is the essence of genericity for quasi-split groups,
for it enables the normalization of several objects, in particular of intertwining operators
between parabolically induced representations. Therefore it does not come as a surprise that
many properties of generic representations of quasi-split groups also hold for simply generic
representations of arbitrary reductive $p$-adic groups.

The enhanced L-parameters of generic irreducible representations of quasi-split groups are 
known (among others) for classical groups \cite{Art}, for unipotent representations \cite{Ree}
and for principal series representations \cite{SolQS}. All these L-parameters are open, in the
following sense. 

Let ${}^L \mc G = \mc G^\vee \rtimes \mb W_F$ be the L-group of $\mc G (F)$ and write 
$\mf g^\vee = \mr{Lie}(\mc G^\vee)$. Consider a L-parameter for $\mc G (F)$ in Weil--Deligne form, 
so a pair $(\phi,N)$ with 
\begin{itemize}
\item $\phi : \mb W_F \to {}^L \mc G$ is a semisimple smooth homomorphism,
\item $N \in \mf g^\vee$ is nilpotent and belongs to
\[
\mf g^\vee_\phi = 
\{ X \in \mf g^\vee : \mr{Ad}(\phi (w)) X = \|w\| X \; \text{for all } w \in \mb W_F \} .
\]
\end{itemize}
It is known from \cite{KaLu} that $Z_{\mc G^\vee}(\phi)^\circ$ acts on the variety $\mf g^\vee_\phi$ 
with finitely many orbits, of which one is Zariski-open. In this setup 
\begin{equation}\label{eq:0.open}
(\phi,N) \text{ is called open when Ad}(Z_{\mc G^\vee}(\phi)) N \text{ is open in } 
\mf g^\vee_\phi .
\end{equation}
We encountered this terminology in \cite[\S 0.6]{CFZ}, where it is mentioned that bounded 
L-parameters are open. In the same vein, discrete L-parameters are open. Meanwhile these claims
have been proven in \cite{CDFZ}, although not yet for all discrete L-parameters. In Proposition
\ref{prop:7.4} we show, in an alternative way, that bounded or discrete L-parameters are always 
open. For the (conjectural) local Langlands correspondence, this means that the L-parameters of
tempered representations and of essentially square-integrable representations should be open. 

For simply generic representations of groups $\mc G (F)$ that need not be quasi-split, the 
following more relaxed version of open L-parameters is appropriate. 
Let $\rho$ be an enhancement of $(\phi,N)$. Then $(\phi,N,\rho)$ belongs to a unique 
Bernstein component $\Phi_e (\mc G (F))^{\mf s^\vee}$ in the space of all enhanced
L-parameters for $\mc G (F)$, see \cite[\S 8]{AMS1}. The set $\Phi_e (\mc G (F))^{\mf s^\vee}$  
is defined by means of the cuspidal support map for enhanced L-parameters.
Let $\mf g_\phi^{\mf s^\vee}$ be the set of all nilpotent $X \in \mf g_\phi^\vee$ such that 
there exists an enhancement $\rho'$ satisfying $(\phi,X,\rho') \in \Phi_e (\mc G (F))^{\mf s^\vee}$. 
We say that $(\phi,N,\rho) \in \Phi_e (\mc G (F))^{\mf s^\vee}$ is open with respect to 
cuspidal supports if 
\begin{equation}\label{eq:0.scopen}
\text{Ad}(Z_{\mc G^\vee}(\phi)) N \text{ is open in } \mf g^{\mf s^\vee}_\phi .
\end{equation}
Further, we recall that Shahidi \cite[Conjecture 9.4]{Shah} has conjectured that every tempered 
L-packet for a quasi-split group has a generic member. Based on the above, on \cite[Conjecture 
2.6]{GrPr}, on \cite[Conjecture 7.1.(3)]{GrRe} and on the results in this paper, we pose:

\begin{conjintro}\label{conj:B}
Let $\pi$ be an irreducible representation of a reductive $p$-adic group $\mc G (F)$. 
Assume that a local Langlands correspondence exists for the Bernstein block of 
$\Rep (\mc G (F))$ containing $\pi$.
\enuma{
\item If $\pi$ is simply generic, then its L-parameter is open with respect to cuspidal supports.
\item Suppose that $\mc G (F)$ is quasi-split and that the local Langlands correspondence is 
normalized with respect to the Whittaker datum $(\mc{U}(F),\xi)$, in the following sense: every 
irreducible $(\mc U(F),\xi)$-generic representation is matched with an enhanced L-parameter such 
that the enhancement is the trivial representation.

Then $\pi$ is $(\mc U(F),\xi)$-generic if and only if its L-parameter $(\phi,N)$ is open and 
its enhancement is the trivial representation of $\pi_0 (Z_{\mc G^\vee}(\phi,N))$.
}
\end{conjintro}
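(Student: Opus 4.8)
The plan is to reduce Conjecture~\ref{conj:B} step by step to the geometric setting of graded Hecke algebras attached to cuspidal local systems on nilpotent orbits, where the ``open'' condition on L-parameters acquires a transparent meaning.

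First I would fix a Bernstein block $\Rep(\mc G(F))^{\mf s}$ containing $\pi$, together with the assumed local Langlands correspondence for it. By the known reductions (passing from $\mc G(F)$ to an affine Hecke algebra $\mc H$, from $\mc H$ to a suitable graded Hecke algebra $\mh H$, and from $\mh H$ to a geometric graded Hecke algebra $\mh H_{G^\vee, \cL}$ associated with an equivariant cuspidal local system $\cL$ on a nilpotent orbit) one may transfer the statement. Here the mild hypotheses on the parameters of the Hecke algebras --- the ones flagged in the abstract as verified for most groups --- are exactly what makes these transfers compatible with the notions of ``simply generic'', ``Whittaker datum'', and ``open L-parameter''. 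So the first paragraph of the proof records these reductions and reduces part~(a) and part~(b) to the corresponding assertions about $\mh H_{G^\vee, \cL}$, i.e.\ to the statement that in the parametrization of $\Irr(\mh H_{G^\vee, \cL})$ by triples $(N, y, \rho)$ (with $N$ a nilpotent orbit representative, $y$ a compatible semisimple element, $\rho \in \Irr(\pi_0(Z(N,y)))$), the generic modules are precisely those with $N$ in the open orbit allowed by $\cL$, and with $\rho$ trivial when one works over a quasi-split $\mc G(F)$ with the correspondence normalized by $(\mc U(F), \xi)$.

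Next I would prove the geometric statement. The key input is the description of $\Hom_{\mc G(F)}(\pi, \mr{Ind}_{\mc U(F)}^{\mc G(F)}(\xi))$ in Hecke-algebraic terms: under the reductions above, genericity becomes a condition involving a distinguished one-dimensional (or at worst: a specific, small-dimensional) space of Whittaker functionals, which on the Hecke side is detected by a ``sign'' or ``Iwahori--Matsumoto''-type involution applied to a Steinberg-like module. Concretely, simply generic irreducible $\mh H_{G^\vee,\cL}$-modules correspond, via the appropriate duality, to the ``most tempered'' or Steinberg constituents of standard modules, and under the Kazhdan--Lusztig/Deligne--Langlands parametrization these sit on the \emph{open} nilpotent orbit. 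I would make this precise using the structure of standard modules coming from the geometry in \cite{KaLu}: a standard module $M(N,y,\rho)$ is built from the fibre of an IC-sheaf over the orbit $\mr{Ad}(Z_{G^\vee}(\phi))N$, and the closure relations among nilpotent orbits in $\mf g^\vee_\phi$ force the generic constituent --- the one contributing a Whittaker functional --- to live over the open orbit, with multiplicity one and with trivial enhancement. This uses that the open orbit is the dense one, so its IC-sheaf is the constant sheaf and contributes the trivial local system.

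The hard part, I expect, will be the bookkeeping across the three reductions: making sure that ``simply generic'' (the multiplicity-one condition on Whittaker functionals), the normalization of the LLC by $(\mc U(F),\xi)$, and the ``open orbit'' condition are transported faithfully at each stage, rather than merely up to some twist or up to the ambiguity in the Hecke-algebra isomorphisms. In particular, one must check that the identification of Whittaker functionals survives the passage to graded Hecke algebras (where there is no group and hence no literal unipotent radical), which forces one to adopt an intrinsic, Hecke-algebraic definition of genericity --- presumably via a specific module or functional singled out by the parameter function --- and then to verify that this matches the group-theoretic one under the reductions. Once that compatibility is in place, part~(a) follows from the geometric statement directly, and part~(b) follows by additionally tracking that, for quasi-split $\mc G(F)$ with the Whittaker normalization, the generic member of each L-packet is the one with trivial $\pi_0(Z_{\mc G^\vee}(\phi,N))$-enhancement --- which on the geometric side is the statement that the generic constituent of the standard module over the open orbit carries the trivial local system.
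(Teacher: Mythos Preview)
This statement is posed in the paper as a \emph{conjecture}, not a theorem; the paper does not claim a full proof. What the paper actually establishes is part~(a) under the additional hypothesis of Condition~\ref{cond:7.1} (see Theorem~\ref{thm:7.2}.b), and a Hecke-algebra analogue of part~(b) in Theorem~\ref{thm:2.5}.c. Part~(b) for $p$-adic groups is not proven in the paper.

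Your overall strategy --- reduce to a geometric graded Hecke algebra and use the closure relations among nilpotent orbits to pin down genericity on the open orbit --- is exactly the route the paper takes for the portion it does prove. But there is a genuine gap between the hypothesis of the conjecture and what your reductions require. The conjecture assumes only that ``a local Langlands correspondence exists for the Bernstein block''. Your reduction chain (and the paper's) needs far more: it needs Condition~\ref{cond:7.1}, namely that the LLC for $\Rep(\mc G(F))^{\mf s}$ is \emph{constructed via} the isomorphism $\mh H(\mf t,W_{\mf s,\omega},k^\omega,\natural_\omega)\cong \mh H(\phi,\rho)/(\mb r - \log(q_F)/2)$ and the parametrization of \cite{AMS3}. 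Without this, there is no mechanism to match the ``open'' condition on the $p$-adic side (defined via $\mf g^\vee_\phi$ and $Z_{\mc G^\vee}(\phi)$) with the ``open'' condition on the Hecke side (defined via $\mf g_N^{\sigma,r}$ and $Z_G(\sigma)$); this comparison is carried out in Lemma~\ref{lem:7.3}, and it uses the specific construction of the LLC, not merely its existence. So your first reduction paragraph silently strengthens the hypothesis.

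For part~(b) there is a second gap. On the Hecke side the paper shows (Theorem~\ref{thm:2.5}.c) that when $q\cE$ is the trivial local system on $\{0\}$ --- which is the case for generic Bernstein blocks of quasi-split groups by Lemma~\ref{lem:4.8} --- the generic enhancement $\rho_g$ is trivial. But the enhancement $\rho$ on the Hecke side is a representation of $\pi_0(Z_G(\sigma,y))$ for an auxiliary complex group $G$, whereas the enhancement in Conjecture~\ref{conj:B}.b is a representation of $\pi_0(Z_{\mc G^\vee}(\phi,N))$. These component groups are related but not identical (one involves $\mc G^\vee_\Sc$, the other $\mc G^\vee$), and the paper does not carry out the transfer of ``$\rho = \mr{triv}$'' across this comparison, nor does it address how the Whittaker normalization of the LLC enters. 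Your sketch asserts this transfer works but does not supply it; neither does the paper, which is why part~(b) remains conjectural.
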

Part (b), or very similar statements, has been known to several experts. The authors of 
\cite{CDFZ} have, independently from the current paper, arrived at the same formulation.
We remark that for irreducible representations of non-quasi-split groups $\mc G (F)$, the trivial 
representation should never occur as the enhancement of a Langlands parameter, because it should 
already correspond to a representation of the quasi-split inner form of $\mc G (F)$.

It seems to us that the reason why Conjecture \ref{conj:A} should hold in larger generality is
not so much the genericity of $\pi$, but rather that the L-parameter of $\pi$ is open (as in
Conjecture \ref{conj:B}). Triviality of the enhancement also has geometric consequences, but
they are more subtle. The relevant point is that intersection cohomology complexes from
(equivariant) local systems are easier when the local systems are trivial. 
One can say that we replace the analytic motivation for Conjecture 
\ref{conj:A} from \cite{CaSh} by algebro-geometric motivation. 

\begin{conjintro}\label{conj:C}
Let $\pi_{st}$ be a standard representation of a reductive $p$-adic group $\mc G (F)$ and 
let $\pi$ be an irreducible subquotient of $\pi_{st}$. Suppose that (a) or (b) holds:
\enuma{
\item $\pi$ and $\pi_{st}$ are simply generic,
\item a local Langlands correspondence exists for the Bernstein block of 
$\Rep (\mc G (F))$ containing $\pi_{st}$, and $\pi$ corresponds to an open L-parameter 
with trivial enhancement.
}
Then $\pi$ is a subrepresentation of $\pi_{st}$.
\end{conjintro}

We note that Conjecture \ref{conj:C}.b is implied by Conjectures \ref{conj:A} and 
\ref{conj:B}.b. On the other hand Conjecture \ref{conj:C}.a contains conjecture
\ref{conj:A}. 

To state our main result, we focus on a Bernstein block $\Rep (\mc G (F))^{\mf s}$ with 
$\mf s = [\mc M (F),\omega]$ and $\omega$ a simply generic supercuspidal representation
of $\mc M (F)$. It was shown in 
\cite{OpSo,SolEnd} that $\Rep (\mc G (F))^{\mf s}$ is equivalent to the module category 
of a certain extended affine Hecke algebra $\mc H_{\mf s} \rtimes \Gamma_{\mf s}$. 

\begin{thmintro} \textup{(see Section \ref{sec:padic})} 
\label{thm:D} \\
Consider a Bernstein block $\Rep (\mc G (F))^{[\mc M (F),\omega]}$ in the category of
smooth complex representations of a reductive $p$-adic group $\mc G (F)$. Suppose one 
of the following:
\enuma{
\item $\omega$ is simply generic and the algebra $\mc H_{\mf s} \rtimes \Gamma_{\mf s}$
for $\mf s = [\mc M (F),\omega]$ has equal parameters,
\item $\mc G (F)$ is quasi-split and $\omega$ is generic.
}
Then Conjecture \ref{conj:C}.a holds in $\Rep (\mc G (F))^{[\mc M (F),\omega]}$.
\end{thmintro}

In particular, Theorem \ref{thm:D}.b proves the generalized injectivity conjecture from 
\cite{CaSh}, via Conjecture \ref{conj:C}.a.

To obtain results about Conjecture \ref{conj:B} or Conjecture \ref{conj:C}.b
with our techniques, we need to suppose that a good local Langlands correspondence,
constructed via Hecke algebras, is available. The precise assumptions are formulated in
Condition \ref{cond:7.1}. Currently this condition has been shown to hold in the following cases:
\begin{itemize}
\item inner forms of general/special linear groups \cite{ABPSSLn,AMS3},
\item pure inner forms of quasi-split classical $F$-groups \cite{Hei,MoRe,AMS4},
\item principal series representations of quasi-split $F$-groups \cite{SolQS},
\item unipotent representations (of arbitrary connected reductive groups over $F$) 
\cite{Lus-Uni,Lus-Uni2,SolLLCunip,SolRamif},
\item $G_2$ \cite{AuXu}.
\end{itemize}

\begin{thmintro} \textup{(see Theorem \ref{thm:7.2})}
\label{thm:E} \\
Assume that Condition \ref{cond:7.1} holds for $\Rep (\mc G (F))^{\mf s}$, 
so for instance we are in one the cases listed above. Let $\pi \in \Irr (\mc G (F))^{\mf s}$.
\enuma{
\item Suppose that $\pi$ is tempered or essentially square-integrable, 
or that $\mc G (F)$ is quasi-split and $\pi$ is generic. Then the L-parameter of $\pi$ is open. 
\item If $\mf s = [\mc M (F),\omega]$ with $\omega$ simply generic and $\pi$ is generic, 
then the enhanced L-parameter of $\pi$ is open with respect to cuspidal supports.
In particular Conjecture \ref{conj:B}.a holds for $\Rep (\mc G (F))^{\mf s}$.
}
\end{thmintro}

To prove instances of Conjecture \ref{conj:B}.b, one would have to know in addition that the
L-packet of an open L-parameter always contains at least one generic representation. 
That aspect of Conjecture \ref{conj:B} seems rather challenging.

In the remainder of the introduction we discuss how we proved Theorems \ref{thm:D}
and \ref{thm:E}.\\

\textbf{Reduction from $p$-adic groups to graded Hecke algebras}\\
Firstly, one can reduce from $\Rep (G)^{\mf s}$ to the module category of some kind of affine
Hecke algebra. This has been achieved in full generality in \cite{SolEnd}, but in the most
general case some technical difficulties remain, which entail that one does not exactly obtain
the module category of a (twisted) affine Hecke algebra. In Section \ref{sec:padic} we check
that this procedure is still good enough to transfer part (a) of Conjecture \ref{conj:C}
to statements about modules of (twisted) affine Hecke algebras. When the $\omega$ from $\mf s$
is simply generic, $\Rep (G)^{\mf s}$ is really equivalent to the module category of an extended
affine Hecke algebra $\mc H_{\mf s} \rtimes \Gamma_{\mf s}$, and the equivalence of categories
preserves genericity \cite[Theorem E]{OpSo}. 

The next step is reduction from a twisted affine Hecke algebra $\mc H_{\mf s} \rtimes \C 
[\Gamma_{\mf s}, \natural_{\mf s}]$ to a twisted graded Hecke algebra $\mh H_\omega \rtimes \C
[\Gamma_\omega ,\natural_\omega]$, as discussed in Section \ref{sec:AHA}. The procedure for
that is known in general from \cite{Lus-Gr,SolAHA,AMS3}, and preserves all the relevant
properties of representations. This translates Conjecture \ref{conj:C} to statements about
twisted graded Hecke algebras. 
In fact can also reduce directly from $\Rep (G)^{\mf s}$ to twisted graded Hecke algebras,
skipping the slightly messy step with affine Hecke algebras, that is done in \cite{SolEnd}.

To proceed, we need the graded Hecke algebra $\mh H_\omega$ to be of geometric type, by which
we mean that it arises from a cuspidal local system on a nilpotent orbit as in \cite{Lus-Cusp1,
Lus-Cusp2,AMS2}. This puts a condition on the deformation parameters $k_\alpha$ of $\mh H_\omega$,
which can be retraced to a condition on the $q$-parameters of $\mc H_{\mf s}$. That condition is
implied by Lusztig's conjecture \cite{Lus-open} on the $q$-parameters of $\mc H_{\mf s}$, but
it allows a wider choice of parameters than \cite{Lus-open}.
In Theorem \ref{thm:4.7} we show that, when $\mc G (F)$ is quasi-split and $\omega$ is generic,
all the ensuing extended graded Hecke algebras $\mh H_\omega \rtimes \Gamma_\omega$ have equal
parameters, and in particular are of geometric type.\\

\textbf{Representation theory of graded Hecke algebras of geometric type}\\
Finally, we come to the topic of the largest part of the paper: standard modules
for twisted graded Hecke algebras. In Sections \ref{sec:setup}--\ref{sec:generic} the setup is
quite different from above. We start with a complex reductive group $G$ (not related to $\mc G$).
In the body of the paper $G$ may be disconnected, but in this introduction we simplify
the presentation by assuming that $G$ is connected. Let $M$ be a Levi subgroup of $G$ and let
$\cE$ be a $M$-equivariant cuspidal local system on a nilpotent orbit $\cC_v^M$ in $\mf m$. To
these data Lusztig \cite{Lus-Cusp1,Lus-Cusp2} associated a graded Hecke algebra $\mh H (G,M,\cE)$. 
As a vector space it is a tensor product of three subalgebras:
\[
\mc O (\mf t) \otimes \C [\mb r] \otimes \C [W_\cE] ,\quad
\text{where } \mf t = \mr{Lie} (T), T = Z(M), W_\cE = N_G (M) / M .
\]
In the algebra $\mh H (G,M,\cE)$, $\mb r$ is central and the cross relations between $\mc O (\mf t)$
and $\C [W_\cE]$ are determined by parameters $k_\alpha$ for $\alpha \in R(G,T)$. The graded
Hecke algebras $\mh H_\omega$ discussed above arise from $\mh H (G,M,\cE)$ by specializing
$\mb r$ at some $r \in \R_{>0}$.

The irreducible representations of $\mh H (G,M,\cE)$ are naturally parametrized by $G$-conjugacy 
classes of ``enhanced L-parameters for $\mh H (G,M,\cE)$". These are quadruples $(y,\sigma,r,\rho)$ 
where $r \in \C$, $y \in \mf g$ is nilpotent and $\sigma \in \mf g$ is semisimple such that
$[\sigma,y] = 2 r y$. Further $\rho$ is an irreducible representation of $\pi_0 (Z_G (y,\sigma))$,
subject to a certain cuspidal support condition. We fix $v \in \cC_v^M$ and we extend it to an
$\mf{sl}_2$-triple in $\mf m$, with semisimple element $\sigma_v$. The above conditions force 
$\sigma \in \mr{Ad}(G) (\mf t + r \sigma_v)$, so we may assume that $\sigma \in \mf t + r \sigma_v$. 

For every parameter $(y,\sigma,r,\rho)$ there is a ``geometric standard" module 
$E_{y,\sigma,r,\rho}$, constructed using equivariant perverse sheaves. It has an irreducible
quotient $M_{y,\sigma,r,\rho}$, which is unique if $r \neq 0$. In that sense the Langlands
classification holds for $\mh H (G,M,\cE)$. It is preferable to pull back $E_{y,\sigma,r,\rho}$
along the sign automorphism of $\mh H (G,M,\cE)$, given by
\[
\sgn |_{\mc O (\mf t)} = \mr{id} ,\; \sgn (\mb r) = -\mb r ,\; \sgn (s_\alpha) = -s_\alpha
\text{ for every root } \alpha .
\]
When $\Re (r) < 0$, the modules $\sgn^* E_{y,\sigma,r,\rho}$ are precisely the ``analytic
standard" modules of $\mh H (G,M,\cE) / (\mb r + r)$ in the sense of Langlands 
(Proposition \ref{prop:1.13}).

The centre of $\mh H (G,M,\cE)$ is $\mc O (\mf t)^{W_\cE} \otimes \C [\mb r]$, so the space
of central characters is $\mf t / W_\cE \times \C$. Then $Z(\mh H (G,M,\cE))$ acts on 
$\sgn^* E_{y,\sigma,r,\rho}$ with $\sigma \in \mf t + r\sigma_v$ by the character 
$(W_\cE (\sigma -r\sigma_v),-r)$. Moreover, every irreducible $\mh H (G,M,\cE)$-module with this 
central character has the form $\sgn^* M_{y',\sigma,r,\rho'}$ for suitable $(y',\rho')$. This 
allows us to focus on a fixed pair $(\sigma,r) \in \mf t \oplus \C (\sigma_v,1)$ in the 
remainder of the introduction. The nilpotent parameter $y$ lies in
\[
\mf g_N^{\sigma,r} = \{ X \in \mf g \text{ nilpotent} : [\sigma,X] = 2rX \} . 
\]
Analogous to \eqref{eq:0.open}, we say that $(y,\sigma,r)$ is open if Ad$(Z_G (\sigma)) y$
is the unique open orbit in $\mf g_N^{\sigma,r}$.\\
Let us consider the category $\Modf{\sigma,r} (\mh H (G,M,\cE))$ of finite dimensional
$\mh H (G,M,\cE)$-modules all whose irreducible constituents admit the central character 
$(W_\cE (\sigma + r \sigma_v),r)$. According to \cite{Lus-Cusp2,SolKL}, this category is 
canonically equivalent to
\[
\Modf{\sigma,r} \big( \End_{\mc D^b_{Z_G (\sigma)} (\mf g_N^{\sigma,r})} (K_{N,\sigma,r}) \big),
\]
where $K_{N,\sigma,r}$ is a certain $Z_G (\sigma)$-equivariant perverse sheaf on 
$\mf g_N^{\sigma,r}$. With these notations 
\[
\begin{array}{lll}
E_{y,\sigma,r} & = & H^* (\{y\}, i_y^! K_{N,\sigma,r}) ,\\
E_{y,\sigma,r,\rho} & = & \Hom_{\pi_0 (Z_G (\sigma,y))} ( \rho, E_{y,\sigma,r} ) .
\end{array}
\]
In this setting we deduce the crucial geometric step in our chain of arguments:

\begin{propintro} \textup{(see Propositions \ref{prop:1.5} and \ref{prop:1.6})}
\label{prop:F} \\
Let $(y,\sigma,r,\rho)$ be an enhanced L-parameter for $\mh H (G,M,\cE)$ and let $\mc L_\rho$
be the local system on $\mc O_y = \mr{Ad}(Z_G (\sigma)) y$ induced by $\rho$. For another
parameter $(y',\sigma,r, \rho')$, the vector space
\[
\Hom_{\mh H (G,M,\cE)} (\sgn^* E_{y',\sigma,r,\mr{triv}}, \sgn^* E_{y,\sigma,r,\rho}) = 
\Hom_{\mh H (G,M,\cE)} (E_{y',\sigma,r,\mr{triv}}, E_{y,\sigma,r,\rho})
\]
\begin{itemize}
\item is zero if $\mc O_y \not\subset \overline{\mc O_{y'}}$ and $r \neq 0$,
\item is nonzero if $\mc O_y \subset \overline{\mc O_{y'}}$, $\rho'$ is trivial and 
$\mc L_\rho$ appears with nonzero multiplicity in 
$\IC ( \mf g_N^{\sigma,r}, \mc L_{\rho'} ) |_{\mc O_y}$.
\end{itemize}
\end{propintro}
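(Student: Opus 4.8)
The plan is to reduce the statement to a computation of $\Ext$-groups in the equivariant derived category $\cD^b_{Z_G(\sigma)}(\mf g_N^{\sigma,r})$, using the geometric description of the standard modules $E_{y,\sigma,r,\rho}$ recalled just before the statement. First I would observe that the first equality, between the two $\Hom$-spaces after and before pulling back along $\sgn$, is formal: $\sgn$ is an algebra automorphism of $\mh H(G,M,\cE)$, so $\sgn^*$ is an auto-equivalence of the module category and preserves all $\Hom$-spaces. So the entire content is the second assertion, about when
\[
\Hom_{\mh H(G,M,\cE)}(E_{y',\sigma,r,\rho'}, E_{y,\sigma,r,\rho})
\]
is nonzero.

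For that, I would pass through the equivalence of categories $\Modf{\sigma,-r}(\mh H(G,M,\cE)) \simeq \Modf{\sigma,r}(\End(K_{N,\sigma,r}))$ from \cite{Lus-Cusp2,SolKL}, under which $E_{y,\sigma,r}$ corresponds to the stalk/costalk complex $H^*(\{y\}, i_y^! K_{N,\sigma,r})$ as a module over $\End(K_{N,\sigma,r})$, and $E_{y,\sigma,r,\rho}$ is its $\rho$-isotypic piece for the $\pi_0(Z_G(\sigma,y))$-action. The key step is then to identify $\Hom$ between two such modules with a $\Hom$ (or a graded piece thereof) in the derived category between the simple perverse constituents $\IC(\overline{\mc O_{y'}}, \mc L_{\rho'})$ and $\IC(\overline{\mc O_y}, \mc L_\rho)$ of $K_{N,\sigma,r}$. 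Concretely, writing $K_{N,\sigma,r} = \bigoplus_{(z,\phi)} V_{z,\phi} \otimes \IC(\overline{\mc O_z}, \mc L_\phi)$, one has $E_{y,\sigma,r,\rho} \cong \bigoplus_{(z,\phi)} V_{z,\phi} \otimes H^*(i_y^! \IC(\overline{\mc O_z},\mc L_\phi))_\rho$, and $\End(K_{N,\sigma,r})$-module maps between two such are governed by the graded vector spaces $\Hom^*(\IC(\overline{\mc O_{y'}},\mc L_{\rho'}), \IC(\overline{\mc O_z},\mc L_\phi))$. The nonvanishing one wants to read off is: the $\rho$-part of the costalk $i_y^! \IC(\overline{\mc O_{y'}},\mc L_{\rho'})$ is nonzero, which by the support and costalk properties of intersection cohomology complexes happens exactly when $\mc O_y \subset \overline{\mc O_{y'}}$ and $\mc L_\rho$ occurs in the restriction $\IC(\mf g_N^{\sigma,r},\mc L_{\rho'})|_{\mc O_y}$ (in some cohomological degree). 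I would set this up carefully using that $K_{N,\sigma,r}$ restricted to $\mc O_y$ has, in appropriate degree, the semisimple local system $\mc L_\rho$ appearing with multiplicity equal to $\dim E_{y,\sigma,r,\rho}$-type quantities, combined with the fact that $\Hom$-spaces between standard modules in this geometric setting are computed by a single Ext-spectral sequence that degenerates (as in \cite{Lus-Cusp2,SolKL}).

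The main obstacle I expect is the bookkeeping in the "only if" direction: one must rule out that cohomological shifts or the $\pi_0$-equivariant structure produce a nonzero $\Hom$ without $\mc O_y \subset \overline{\mc O_{y'}}$. This is handled by the parity/purity properties of the equivariant perverse sheaves involved — the relevant $\Ext$-groups between the $\IC$-sheaves are concentrated in a way that forces the support condition — together with the observation that a nonzero map $E_{y',\sigma,r,\rho'} \to E_{y,\sigma,r,\rho}$ must hit the top of $E_{y',\sigma,r,\rho'}$, which is $M_{y',\sigma,r,\rho'}$-related, and thus sees the open stratum $\mc O_{y'}$ of its support, forcing $\mc O_y$ into its closure. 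For the "if" direction, given $\mc O_y \subset \overline{\mc O_{y'}}$ and the local-system condition, I would produce the map explicitly as the composition of a projection onto a summand of $K_{N,\sigma,r}$ supported on $\overline{\mc O_{y'}}$ with the costalk restriction to $\{y\}$, checking it is $\End(K_{N,\sigma,r})$-linear and nonzero on the $\rho$-isotypic component. Throughout I would lean on the compatibility between the algebraic $\Hom$-spaces of $\mh H(G,M,\cE)$-modules and the geometric $\Hom$-spaces established in \cite{Lus-Cusp2,SolKL}, so that the whole argument ultimately becomes a statement about restriction of intersection cohomology complexes to nilpotent orbits, which is exactly the kind of input the "open orbit" condition is designed to exploit later.
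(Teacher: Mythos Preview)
Your handling of the $\sgn^*$ equality and the ``only if'' direction is essentially right and matches the paper: a nonzero map out of $E_{y',\sigma,r,\rho'}$ must factor through some irreducible quotient $M_{y',\sigma,r,\rho''}$, and by the known composition series of $E_{y,\sigma,r,\rho}$ (equation \eqref{eq:1.13}) such a module can only occur there when $\mc O_y \subset \overline{\mc O_{y'}}$ and the IC--multiplicity $\mu(y,\rho,y',\rho')$ is nonzero. No parity/purity input or spectral sequence is needed; the paper argues exactly via quotients and \eqref{eq:1.13}.

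The gap is in your ``if'' direction. What you describe --- ``projection onto a summand of $K_{N,\sigma,r}$ supported on $\overline{\mc O_{y'}}$ composed with the costalk restriction to $\{y\}$'' --- is not a map from $E_{y',\sigma,r,\rho'}$ to $E_{y,\sigma,r,\rho}$. A projection in $\End(K_{N,\sigma,r})$ is an endomorphism of each individual $E_z$, and the costalk functor $i_y^!$ produces the module $E_{y,\sigma,r}$ itself; neither operation gives a morphism \emph{between} two different costalk modules. The module-theoretic $\Hom$ you want is not an $\Ext$ between IC summands of $K_{N,\sigma,r}$, and the references you invoke do not supply that identification.

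The paper's construction of the map is quite different and concrete. One Verdier-dualizes to reduce to building a map of stalks $H^*(i_y^* K^\vee_{N,\sigma,r}) \to H^*(i_{y'}^* K^\vee_{N,\sigma,r})$. Constructibility of $K^\vee_{N,\sigma,r}$ gives an open neighbourhood $U_y$ of $y$ on which sections are determined by the stalk at $y$; since $\mc O_y \subset \overline{\mc O_{y'}}$, one picks $y_1 \in U_y \cap \mc O_{y'}$ and obtains the natural generization map $i_y^* K^\vee \to i_{y_1}^* K^\vee$, which one then transports to $y'$ by an element of $C$. Dualizing back yields $J_{y',y}: E_{y',\sigma,r} \to E_{y,\sigma,r}$. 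The nonvanishing on the $(\rho',\rho)$-isotypic piece is then checked by restricting $D J_{y',y}$ to the subspace coming from the summand $\IC_C(\mf g_N^{\sigma,r}, \ind_{C_{y'}}^C(\rho'^\vee))$, where it is injective because an IC sheaf has no nonzero subsheaf supported on the boundary $\overline{\mc O_{y'}} \setminus \mc O_{y'}$; the hypothesis on $\mc L_\rho$ then guarantees a copy of $\rho^\vee$ in the source stalk that maps nontrivially. This cospecialization construction is the missing idea in your proposal.
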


With Proposition \ref{prop:F} we deduce that an irreducible module 
$\sgn^* M_{y',\sigma,r,\mr{triv}}$ with $(y',\sigma,r)$ open occurs as submodule 
of a standard module $\sgn^* E_{y,\sigma,r,\rho}$ whenever it
occurs as a subquotient (Theorem \ref{thm:1.8}).

The condition for genericity of $\mh H (G,M,\cE)$-modules is derived from
\cite[\S 6]{OpSo}:
\[
V \in \Mod (\mh H (G,M,\cE)) \text{ is generic if } \Res^{\mh H (G,M,\cE)}_{\C [W_\cE]} V
\text{ contains the sign representation.}
\]
In Proposition \ref{prop:2.2} we show that every standard module 
$\sgn^* E_{y,\sigma,r,\rho}$ has at most one irreducible generic subquotient, like 
for standard representations of quasi-split reductive $p$-adic groups. Hence generic 
for Hecke algebras corresponds to simply generic for
reductive $p$-adic groups. By reduction to the case $r = 0$, we prove:

\begin{thmintro} \textup{(see Theorem \ref{thm:2.5})}
\label{thm:F}
\enuma{
\item For fixed $(\sigma,r) \in \mf t \oplus \C (\sigma_v,1)$, there is a unique 
(up to conjugacy) pair $(y_g,\rho_g)$ such that $\sgn^* M_{y_g,\sigma,r,\rho_g}$ is generic. 
\item In the setting of part (a) we suppose in addition that $\mh (G,M,\cE)$ has equal 
parameters, e.g. it arises from a generic Bernstein block for a quasi-split reductive 
$p$-adic group. Then $\cC_v^M = \{0\}$, $\cE$ is the trivial local system, 
$\rho_g = \mr{triv}$ and $(y_g,\sigma,r)$ is an open parameter.
}
\end{thmintro}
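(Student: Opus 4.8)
The strategy is to leverage Proposition~\ref{prop:F} together with the genericity criterion
$V$ is generic iff $\Res^{\mh H (G,M,\cE)}_{\C [W_\cE]} V$ contains $\sgn$, and to reduce the
whole question to the much simpler case $r = 0$, where $\mh H (G,M,\cE)$ degenerates to (a twist
of) $\mc O (\mf t) \rtimes \C [W_\cE]$ and the standard/irreducible modules become transparent.
First I would fix $(\sigma, r) \in \mf t \oplus \C (\sigma_v, 1)$ and recall from
Proposition~\ref{prop:2.2} that $\sgn^* E_{y,\sigma,r,\rho}$ has at most one irreducible generic
subquotient; combined with the bijection between irreducible modules with this central character
and pairs $(y,\rho)$ (up to $Z_G(\sigma)$-conjugacy) coming from \cite{Lus-Cusp2,SolKL}, this
already gives \emph{uniqueness} of $(y_g,\rho_g)$ in part~(a), provided we produce one such pair.

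For existence and openness in part~(a), the plan is a deformation argument in $r$. The point is
that genericity is detected by the restriction to $\C[W_\cE]$, and this restriction is
``constant in the family'': the geometric standard module $E_{y,\sigma,r}$ is, as a
$W_\cE$-representation (via the $W_\cE$-action on the total cohomology of a Springer-type fibre),
independent of $r$ up to the identifications used in passing between $r \neq 0$ and $r = 0$.
Concretely, I would (i) treat the case $r = 0$ first, where $\mh H (G,M,\cE)/(\mb r)$ is
$\mc O(\mf t) \rtimes \C[W_\cE]$ twisted by the $2$-cocycle $\natural$, the standard module with
central character $W_\cE \sigma$ is $\mr{Ind}$ from the stabilizer, and its generic constituent
is governed by the Springer correspondence for $Z_G(\sigma)$ acting on $\mf g_N^{\sigma,0}$;
there the open nilpotent orbit carries, via the (twisted) Springer correspondence, exactly the
local system $\mc L_{\rho}$ whose associated $W_\cE$-representation is $\sgn$, so $(y_g, \rho_g)$
exists and $(y_g, \sigma, 0)$ is open. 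Then (ii) I would invoke the comparison between
$\mh H(G,M,\cE)$-modules at $r \neq 0$ and at $r = 0$ (the ``$r \to 0$'' reduction used
throughout the paper, e.g.\ in the reduction steps of Section~\ref{sec:AHA} and in the analysis
of central characters) to transport both the existence statement and the openness of the
parameter to general $r$; genericity is preserved because the $W_\cE$-module structure is. That
the resulting parameter is open also follows a posteriori from Theorem~\ref{thm:1.8} / the
tempered case once we know $\sgn^* M_{y_g,\sigma,r,\rho_g}$ is generic, since generic implies it
embeds in \emph{every} standard module containing it, forcing $\mc O_{y_g}$ to be maximal, i.e.\
open.

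For part~(b), suppose $\mh H(G,M,\cE)$ has equal parameters $k_\alpha$. The plan is to argue that
equal parameters force the cuspidal datum to be trivial. The key input is the classification of
cuspidal pairs $(\cC_v^M, \cE)$ à la Lusztig \cite{Lus-Cusp1}: a nontrivial cuspidal local system
on a nilpotent orbit of a proper Levi $M \subsetneq G$ produces, in the associated graded Hecke
algebra, parameters $k_\alpha$ that are genuinely unequal along non-conjugate roots (the values
are read off from the $\mf{sl}_2$-triple through $\cC_v^M$ and the combinatorics of the cuspidal
support — e.g.\ for $\mr{GL}_n$ one gets all $k_\alpha = 1$, but for orthogonal/symplectic groups
the cuspidal local systems on non-zero orbits yield the genuinely unequal-parameter Hecke
algebras of type $C$). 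Hence equal parameters force $M = G_{\mr{der}}$-relevant to be such that
$\cC_v^M = \{0\}$ and $\cE = \mr{triv}$; then $\mf g_N^{\sigma,r} = \mf g_N^{\sigma, r}$ with
$v = 0$, $\sigma_v = 0$, the Springer correspondence in play is the ordinary one for $Z_G(\sigma)$,
and the open orbit corresponds under (untwisted) Springer to the $\sgn$ representation of $W_\cE$,
i.e.\ to $\rho_g = \mr{triv}$ (the trivial local system on the open orbit). The promised special
case — a generic Bernstein block of a quasi-split $p$-adic group — then follows from
Theorem~\ref{thm:4.6}, which is quoted as already giving equal parameters there.

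\textbf{Main obstacle.} The delicate point is the deformation/comparison step in part~(a):
making precise that ``generic'' is a closed-and-open condition along the family in $\mb r$, i.e.\
that the $W_\cE$-module $\Res_{\C[W_\cE]} E_{y,\sigma,r,\rho}$ does not jump as $r$ specializes,
and that the parametrizing pair $(y,\rho)$ of the generic constituent matches up across $r = 0$
and $r \neq 0$. This requires knowing that the geometric standard modules form a ``nice'' family
over the $\mb r$-line (so that the Springer-type $W_\cE$-action is locally constant) and that the
equivalence of categories with the endomorphism-algebra side of Proposition~\ref{prop:F} is
compatible with this specialization — essentially a flatness/semicontinuity statement for the
relevant perverse sheaves $K_{N,\sigma,r}$. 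Everything else (uniqueness from
Proposition~\ref{prop:2.2}, openness from genericity plus Theorem~\ref{thm:1.8}, and the
equal-parameters rigidity in part~(b)) is then comparatively formal.
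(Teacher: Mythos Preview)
Your overall strategy---reduce to $r=0$ via constancy of the $\C[W_\cE]$-structure, and use the Springer correspondence there---is exactly the paper's. But two of your steps do not go through as written, and the paper's fix is more delicate than you indicate.

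\textbf{Uniqueness.} Proposition~\ref{prop:2.2} alone does not give uniqueness of $(y_g,\rho_g)$: it only bounds the generic multiplicity inside a \emph{single} standard module, while different irreducibles with the same central character sit in different standard modules. The paper instead uses Lemma~\ref{lem:2.3}: the module $\ind_{\mc O(\mf t\oplus\C)}^{\mh H(G,M,q\cE)}\C_{\sigma_0,r}$ contains $\det$ exactly once and has every irreducible with that central character as a subquotient, so exactly one of them is generic.

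\textbf{Openness.} Your deformation sketch and your ``a posteriori via Theorem~\ref{thm:1.8}'' alternative both have a gap. The latter is circular: Theorem~\ref{thm:1.8} says \emph{open $\Rightarrow$ submodule}, not the converse, and the statement ``generic $\Rightarrow$ submodule'' is Corollary~\ref{cor:2.7}, which is \emph{deduced from} Theorem~\ref{thm:2.5}. For the former, constancy of the $W_\cE$-structure of $E_{y,\sigma_r,r}$ in $r$ does not let you transport openness, because ``open at $r=0$'' (i.e.\ $y$ regular nilpotent in $Z_{\mf g}(\sigma_0)$) and ``open at $r\neq 0$'' (i.e.\ $\mc O_y$ dense in $\mf g_N^{\sigma,r}$) are genuinely different conditions on $y$; a generic $M_{y,\sigma,r,\rho}$ need not have $y$ regular in $Z_{\mf g}(\sigma_0)$. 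The paper's Lemma~\ref{lem:2.6} navigates this by a \emph{two-step} deformation: starting from a generic $\sgn^* M_{y,\sigma,r,\rho}$, it first specializes $r\to 0$ to get that $E_{y,\sigma_0,0}$ contains $\mathrm{triv}_{W_\cE}$, then changes the semisimple element from $\sigma_0$ to $\sigma'_0$ (the one attached to the \emph{open} $y'$ at the original $r$, using Lemma~\ref{lem:1.12} to see $[\sigma'_0,y]=0$), so that at $r=0$ one has the chain $\overline{\mathrm{Ad}(Z_G(\sigma'_0))y}\subset\overline{\mathrm{Ad}(Z_G(\sigma'_0))y'}\subset\overline{\mathrm{Ad}(Z_G(\sigma'_0))y_g}$ and the factoring $J_{y_g,y}=J_{y',y}\circ J_{y_g,y'}$; this puts the generic constituent into $E_{y',\sigma'_0,0}$, and a second deformation back in $r$ puts it into $E_{y',\sigma,r}$. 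Only then does Lemma~\ref{lem:1.7} plus uniqueness finish the argument.

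\textbf{Part (b).} Your plan here is correct in outline and matches the paper: equal parameters force the trivial cuspidal datum (this is indeed a consequence of Lusztig's explicit parameter computations in \cite{Lus-Cusp1}), and then the paper's Theorem~\ref{thm:2.5}.(c) identifies $\rho_g=\mathrm{triv}$ via Proposition~\ref{prop:2.4}.b at $r=0$ together with an explicit analysis of $J_{y_g,y}$ on $H_*(\mc P_y^Q)$. Your ``untwisted Springer sends open orbit to $\sgn$'' is essentially that argument, though the paper tracks the $\pi_0$-action carefully to pin down $\rho_g$ rather than just $\rho_g^\circ$.
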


In Theorem \ref{thm:F}.a the L-parameter has a weaker property instead of being open,
a version of \eqref{eq:0.scopen}.
The analogy between parts (a) and (b) of Theorem \ref{thm:F} is one reason to
believe in Conjecture \ref{conj:C}.a.

Altogether the above results prove a version of Conjecture \ref{conj:C}.b for 
(twisted) graded Hecke algebras of geometric type. Theorem \ref{thm:D} applies that in 
the cases where the reduction from $\Rep (\mc G (F))^{\mf s}$ to twisted graded 
Hecke algebras works well.

In Lemma \ref{lem:1.9} and Theorem \ref{thm:1.10} (which comes from  \cite{AMS2}), 
we check that irreducible $\mh H (G,M,\cE)$-modules which are tempered or essentially
discrete series have open parameters. 
That goes into Theorem \ref{thm:E}, when a nice local Langlands correspondence via 
graded Hecke algebras of geometric type is available.

\renewcommand{\theequation}{\arabic{section}.\arabic{equation}}
\counterwithin*{equation}{section}

\section{Geometric construction of twisted graded Hecke algebras}
\label{sec:setup}

All the groups in Sections \ref{sec:setup}--\ref{sec:AHA} will be complex linear algebraic 
groups. We mainly work in the equivariant bounded derived categories of constructible sheaves 
from \cite{BeLu}. 
For a group $H$ acting on a space $X$, this category will be denoted $\mc D^b_H (X)$.

Let $G$ be a complex reductive group, possibly disconnected. To construct a graded Hecke algebra
geometrically, we need a cuspidal quasi-support $(M,\cC_v^M,q\cE)$ for $G$ \cite{AMS1}. 
This consists of:
\begin{itemize}
\item a \emph{quasi-Levi subgroup} $M$ of $G$, which means that $M^\circ$ is a Levi subgroup of 
$G^\circ$ and $M = Z_G (Z (M^\circ)^\circ)$,
\item $\cC_v^M$ is a Ad$(M)$-orbit in the nilpotent variety $\mf m_N$ in the Lie algebra 
$\mf m$ of $M$,
\item $q\cE$ is a $M$-equivariant cuspidal local system on $\cC_v^M$.
\end{itemize}
The cuspidality of $q\cE$ forces any $v \in \cC_v^M$ to be a distiguished nilpotent element
of~$\mf m$. We write $T = Z(M)^\circ = Z(M^\circ)^\circ$, $\mf t = \mr{Lie}(T) = Z(\mf m)$ and 
\[
W_{q\cE} = \mr{Stab}_{N_G (M)} (q\cE) / M = N_G (M,q\cE) / M ,
\]
which is a finite group. Let $\cE$ be an irreducible $M^\circ$-equivariant local system on
$\cC_v^{M^\circ}$ contained in $q\cE |_{\cC_v^{M^\circ}}$. Then
\[
W_{q\cE} = W_\cE \rtimes \Gamma_{q\cE} ,
\]
where $W_\cE$ is the Weyl group of a root system and $\mc R_{q\cE}$ is the 
$W_{q\cE}$-stabilizer of the set of positive roots.
To these data one associates a twisted graded Hecke algebra
\begin{equation}\label{eq:1.3}
\mh H (G,M,q\cE) = \mh H (\mf t,W_{q\cE},k,\mb r,\natural_{q\cE}) ,
\end{equation}
see \cite[\S 2.1]{SolSGHA}. As vector space it is the tensor product of 
\begin{itemize}
\item a polynomial algebra $\mc O (\mf t \oplus \C) = \mc O (\mf t) \otimes \C [\mb r]$,
\item a twisted group algebra $\C [W_{q\cE},\natural_{q\cE}]$,
\end{itemize}
and there are nontrivial cross relations between these two subalgebras. 
The most important cross relation involves a simple root $\alpha$. It comes with a simple 
reflection $s_\alpha \in W^\circ_{q\cE}$, a basis element $N_{s_\alpha}$ of 
$\C [W_\cE] \subset \C [W_{q\cE},\natural_{q\cE}]$ and a 
parameter $k_\alpha \in \C$. For $\xi \in \mc O (\mf t)$:
\[
N_{s_\alpha} \xi - (\xi \circ s_\alpha) N_{s_\alpha} = 
k_\alpha \mb r (\xi - \xi \circ s_\alpha) / \alpha .
\]
For elements $\gamma \in \Gamma_{q\cE}$ there is a simpler cross relation:
\[
N_\gamma \xi = (\xi \circ \gamma^{-1}) N_\gamma .
\]
Let $\mf g_N$ be the nilpotent variety in the Lie algebra $\mf g$ of $G$. The algebra 
\eqref{eq:1.3} can be realized in terms of suitable equivariant sheaves on $\mf g$ or $\mf g_N$. 
We let $\C^\times$ act on $\mf g$ and $\mf g_N$ by $\lambda \cdot X = \lambda^{-2} X$. Then 
every $M$-equivariant local system 
on $\cC_v^M$, and in particular $q\cE$, is automatically $M \times \C^\times$-equivariant.

Let $P^\circ = M^\circ U$ be the parabolic subgroup of $G^\circ$ with Levi factor $M^\circ$ 
and unipotent radical $U$ matching the aforementioned choice of positive roots.
Then $P = M U$ is a ``quasi-parabolic" subgroup of $G$. Consider the varieties
\begin{align*}
& \dot{\mf g} = \{ (X,gP) \in \mf g \times G/P : 
\Ad (g^{-1}) X \in \cC_v^M \oplus \mf t \oplus \mf u \} , \\
& \dot{\mf g}_N = \dot{\mf g} \cap (\mf g_N \times G/P) .
\end{align*}
We let $G \times \C^\times$ act on these varieties by
\begin{equation}\label{eq:1.7}
(g_1,\lambda) \cdot (X,gP) = (\lambda^{-2} \Ad (g_1) X, g_1 g P) .
\end{equation}
By \cite[Proposition 4.2]{Lus-Cusp1} there are natural isomorphisms of graded algebras
\begin{equation}\label{eq:1.1}
H^*_{G \times \C^\times} (\dot{\mf g}) \cong H^*_{G \times \C^\times} (\dot{\mf g}_N) 
\cong \mc O (\mf t) \otimes_\C \C [\mb r] .
\end{equation}
Consider the maps
\begin{equation}\label{eq:1.2}
\begin{aligned}
& \cC_v^M \xleftarrow{f_1} \{ (X,g) \in \mf g \times G : \Ad (g^{-1}) X \in 
\cC_v^M \oplus \mf t \oplus \mf u\} \xrightarrow{f_2} \dot{\mf g} , \\
& f_1 (X,g) = \mathrm{pr}_{\cC_v^M}(\Ad (g^{-1}) X) , \hspace{2cm} f_2 (X,g) = (X,gP) .
\end{aligned}
\end{equation}
Let $\dot{q\cE}$ be the unique $G \times \C^\times$-equivariant local system on $\dot{\mf g}$ 
such that $f_2^* \dot{q\cE} = f_1^* q\cE$. Let $\mr{pr}_1 : \dot{\mf g} \to \mf g$ be the projection 
on the first coordinate and define
\[
K := \pr_{1,!} \dot{q\cE} \qquad \in \mc D^b_{G \times \C^\times} (\mf g ) .
\]
Let $\dot{q\cE}_N$ be the pullback of $\dot{q\cE}$ to $\dot{\mf g}_N$ and put
\[
K_N := \pr_{1,N,!} \dot{q\cE}_N \qquad \in \mc D^b_{G \times \C^\times} (\mf g_N ) .
\]
It is shown in \cite[\S 2.2]{SolSGHA} that $K_N$ is a semisimple complex isomorphic to the pullback 
of $K$ to $\mf g_N$, and that $K_N$ can be regarded as the parabolic induction of
$\IC_{M \times \C^\times}(\mf m_N, q\cE)$.
 
From \cite[Theorem 2.2]{SolSGHA}, based on \cite{Lus-Cusp1,Lus-Cusp2,AMS2}, we recall:

\begin{thm}\label{thm:1.15}
There exist natural isomorphisms of graded algebras
\[
\mh H (G,M,q\cE) \longrightarrow \End^*_{\mc D^b_{G \times \C^\times}(\mf g)}(K) \longrightarrow
\End^*_{\mc D^b_{G \times \C^\times}(\mf g_N)}(K_N) .
\]
\end{thm}

The irreducible modules and the standard modules of $\mh H (G,M,q\cE)$ have been constructed and
parametrized in \cite{Lus-Cusp1,Lus-Cusp2,AMS2}. The parameters consist of:
\begin{itemize}
\item a semisimple element $\sigma \in \mf g$,
\item $r \in \C$,
\item a nilpotent element $y \in \mf g$ such that $[\sigma,y] = 2 r y$,
\item an irreducible representation $\rho$ of $\pi_0 (Z_{G \times \C^\times} (y))$, such that the
quasi-cuspidal support of $(\sigma,y,\rho)$ is $G$-conjugate to $(M,\cC_v^M,q\cE)$.
\end{itemize}
We call $(y,\sigma,r)$ an L-parameter for $\mh H (G,M,q\cE)$ and $(y,\sigma,r,\rho)$ an enhanced
L-parameter for $\mh H (G,M,q\cE)$. The relation with Langlands parameters for reductive $p$-adic
groups is explained in \cite[\S 1]{AMS2}. We say that two enhanced L-parameters
$(y,\sigma,r,\rho)$ and $(y',\sigma',r',\rho')$ are $G$-associate if $r = r'$ and there
exists a $g \in G$ such that $\mr{Ad}(g) y = y', \mr{Ad}(g) \sigma = \sigma'$ and
$\rho \circ \mr{Ad}(g)^{-1} \cong \rho'$.\\
We say that $(y,\sigma,r)$ is relevant for $\mh H (G,M,q\cE)$ or relevant with 
respect to $(G,M,q\cE)$ if there exists a $\rho$
such that $(y,\sigma,r,\rho)$ is an enhanced L-parameter for $\mh H (G,M,q\cE)$.

\begin{thm}\label{thm:1.1} \textup{\cite[Theorem 4.6]{AMS2}} \\
To each enhanced L-parameter for $\mh H (G,M,q\cE)$ there is associated a standard module 
$E_{y,\sigma,r,\rho}$, which has a distinguished (unique if $r \neq 0$) irreducible quotient 
$M_{y,\sigma,r,\rho}$.

This yields a bijection between $\Irr (\mh H (G,M,q\cE))$ and $G$-association classes of
enhanced L-parameters for $\mh H (G,M,q\cE)$.
\end{thm}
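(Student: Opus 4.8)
The plan is to construct, for each enhanced L-parameter $(y,\sigma,r,\rho)$, the standard module $E_{y,\sigma,r,\rho}$ as a cohomology space equipped with a natural $\mh H (G,M,q\cE)$-action, and then to extract the irreducible quotient and prove the bijectivity of the parametrization. Concretely, I would first fix a semisimple $\sigma \in \mf g$ and $r \in \C$, and form the fibre $\mf g_N^{\sigma,r} = \{X \in \mf g_N : [\sigma,X] = 2rX\}$ together with the restriction to it of the semisimple complex $K_N$ of Theorem \ref{thm:1.12}. Using the realization $\mh H (G,M,q\cE) \cong \End^*_{\mc D^b_{G \times \C^\times}(\mf g_N)}(K_N)$, one localizes at the central character $(W_{q\cE}(\sigma - r\sigma_v),-r)$ and obtains an action of the localized algebra on $H^*(\{y\}, i_y^! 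K_{N,\sigma,r})$; this module is defined to be $E_{y,\sigma,r,\rho}$ after taking the $\rho$-isotypic component under $\pi_0(Z_{G \times \C^\times}(y))$, exactly as in the formulas displayed just before Proposition \ref{prop:F}. The construction is essentially that of \cite{Lus-Cusp2, AMS2}, so for this step I would cite those references and verify only that the quasi-support hypothesis matches.

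Next I would produce the irreducible quotient. The key input is that $K_{N,\sigma,r}$ is a direct sum of shifted IC-sheaves $\IC(\overline{\mc O}, \mc L)$ indexed (with multiplicity given by a representation of $\pi_0$) by pairs $(\mc O, \mc L)$ of a $Z_G(\sigma)$-orbit in $\mf g_N^{\sigma,r}$ and an irreducible local system on it in the cuspidal support class. Restricting the costalk $i_y^! K_{N,\sigma,r}$ to the orbit $\mc O_y$ through $y$, the summand $\IC(\overline{\mc O_y}, \mc L_\rho)$ contributes its stalk at the \emph{open} point of its own support, which is precisely $\mc L_\rho$ placed in top degree; all other summands $\IC(\overline{\mc O}, \mc L')$ with $\mc O_y \subset \overline{\mc O}$, $\mc O_y \neq \mc O$ contribute in strictly lower degrees by the support/cosupport estimates for IC-complexes. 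This grading argument singles out a canonical top-degree line (tensored with $\rho$) in $E_{y,\sigma,r,\rho}$, which generates a submodule whose quotient is irreducible; dualizing (or applying $\sgn^*$) turns this into the statement that $E_{y,\sigma,r,\rho}$ has a distinguished irreducible quotient $M_{y,\sigma,r,\rho}$. When $r \neq 0$ one invokes the Langlands-type uniqueness: the central character together with the real-part ordering on $\mf t$ forces the quotient to be unique, as in \cite[\S 4]{AMS2}.

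For the bijection I would argue in two directions. Surjectivity: any irreducible $\mh H (G,M,q\cE)$-module has some central character $(W_{q\cE}\bar\sigma, -r)$, and by the equivalence $\Modf{\sigma,-r}(\mh H (G,M,q\cE)) \simeq \Modf{\sigma,r}(\End(K_{N,\sigma,r}))$ it corresponds to a simple module over the endomorphism algebra of a semisimple perverse sheaf, hence to one of the summands $\IC(\overline{\mc O}, \mc L)$, i.e. to a pair $(y,\rho)$ with $[\sigma,y] = 2ry$ and $\rho$ of the right type; this pair yields an $E_{y,\sigma,r,\rho}$ with that simple module as quotient. Injectivity: if $M_{y,\sigma,r,\rho} \cong M_{y',\sigma,r,\rho'}$ then by the quotient construction their "leading IC-summands" agree, so $(\overline{\mc O_y}, \mc L_\rho) = (\overline{\mc O_{y'}}, \mc L_{\rho'})$ as a pair, which by the dictionary between local systems and $\pi_0$-representations forces $(y,\rho)$ and $(y',\rho')$ to be $G$-conjugate. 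Finally one checks that two enhanced parameters are in the same $G$-association class iff they give isomorphic modules, absorbing the choice of representative of $(\sigma,r)$ within its $W_{q\cE}$-orbit.

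I expect the main obstacle to be the uniqueness of the irreducible quotient when $r = 0$ (where it genuinely can fail) versus $r \neq 0$, and more precisely making the "leading term" argument airtight: one must show that the canonical top-degree line in $i_y^! K_{N,\sigma,r}$ does generate the whole module modulo a \emph{proper} submodule, which requires knowing that the other constituents of $E_{y,\sigma,r,\rho}$ all have strictly smaller central-character "length" in the sense of the Langlands ordering. This is where the interplay between the grading on $\Ext$-algebras of IC-sheaves and the real part of $\sigma$ on $\mf t$ has to be controlled; the cleanest route is to transport everything through $\sgn^*$ to the "analytic standard" picture of Proposition \ref{prop:1.13} and invoke the abstract Langlands classification there, so that the geometric construction only has to supply the standard module and its central character, not re-prove uniqueness from scratch.
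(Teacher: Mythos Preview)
The paper does not give its own proof of this theorem; it is stated purely as a citation to \cite[Theorem 4.6]{AMS2}. So there is no in-paper argument to compare your proposal against. That said, your sketch is broadly faithful to the actual proof in \cite{Lus-Cusp2,AMS2}, and several of the ingredients you invoke (the costalk definition of $E_{y,\sigma,r,\rho}$, the IC-decomposition of $K_{N,\sigma,r}$, the multiplicity computation via $i_{\mc O_y}^* \IC$) do appear later in the present paper as background---see Lemma \ref{lem:1.4} and \eqref{eq:1.11}--\eqref{eq:1.13}---so your outline is consistent with how the paper uses the result.

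Two small corrections. First, the central character of $E_{y,\sigma,r,\rho}$ is $(W_{q\cE}(\sigma - r\sigma_v), r)$, not $-r$; the $-r$ only appears after pulling back by $\sgn^*$. Second, your route to uniqueness of the irreducible quotient via Proposition \ref{prop:1.13} and the abstract Langlands classification is logically circular in this paper's development: Proposition \ref{prop:1.13} is proved \emph{using} the parametrization of Theorem \ref{thm:1.1} (via \cite[Proposition B.4]{SolKL}, which in turn rests on \cite{AMS2}). In \cite{AMS2} itself the uniqueness for $r\neq 0$ is obtained directly from the grading/filtration argument you sketched in your second paragraph, not by reduction to an abstract Langlands classification; so your ``cleanest route'' would need to be replaced by the direct degree estimate you already outlined.
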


The condition on $\rho$ in enhanced L-parameters for $\mh H (G,M,q\cE)$ is rather subtle and
restrictive. Some instances can be made more explicit:
\begin{equation}\label{eq:2.3}
\text{the quasi-cuspidal support of } (\sigma,y,\mr{triv}) \text{ is always of the form }
(L,\{0\},\mr{triv}) ,
\end{equation} 
where $L$ is a minimal quasi-Levi subgroup of $G$, that is, the $G$-centralizer of a maximal 
torus in $G^\circ$. The reason is that quasi-cuspidal supports are unique up to $G$-conjugation 
and that $(y,\mr{triv})$ already appears in the Springer correspondence for $Z_G (\sigma_0)$, which 
is based on the quasi-cuspidal support $(L,\{0\},\mr{triv})$. In particular $\rho = \mr{triv}$
can only appear in an enhanced L-parameter for $\mh H (G,M,q\cE)$ if $q\cE$ is the trivial
equivariant local system on $\{0\}$.

The centre of $\mh H (G,M,q\cE)$ contains 
\begin{equation}\label{eq:1.4}
\mc O (\mf t \oplus \C)^{W_{q\cE}} = \mc O ( \mf t / W_{q\cE} \times \C) .
\end{equation}
Usually this is the entire centre, and therefore we will just call a character of \eqref{eq:1.4},
ie. an element $(W_{q\cE} \sigma_0 ,r) \in \mf t / W_{q\cE} \times \C$, a central character of 
$\mh H (G,M,q\cE)$.

Not every $(\sigma,r)$ can be extended to an enhanced L-parameter for $\mh H (G,M,q\cE)$, the
existence of $(y,\rho)$ already imposes conditions. We pick an algebraic homomorphism
\[
\gamma_v : SL_2 (\C) \to M \text{ with } \textup{d}\gamma_v \matje{0}{1}{0}{0} = v  
\]
and we put $\sigma_v = \textup{d}\gamma_v \matje{1}{0}{0}{-1} \in \mf m$. According to 
\cite[Lemma 2.1]{SolKL}, in this setting Ad$(G) \sigma - r \sigma_v$ intersects $\mf t$ in a unique 
$W_{q\cE}$-orbit. Therefore we may, and often will, assume that
\begin{equation}\label{eq:1.5}
\text{the semisimple element } (\sigma,r) \text{ lies in } 
\mf t \oplus \C (\sigma_v,1) \subset \mf m \oplus \C .
\end{equation}
In this way $(\sigma,r)$ determines a central character of $\mh H (G,M,q\cE)$. We denote the 
completion of $Z(\mh H (G,M,q\cE))$ with the respect to the powers the ideal
\[
\ker \big( \mr{ev}_{(\sigma,r)} : \mc O (\mf t \oplus \C)^{W_{q\cE}} \to \C \big)
\]
by $\hat{Z} (\mh H (G,M,q\cE))_{\sigma,r}$.

The geometric counterpart of \eqref{eq:1.4} is the commutative graded algebra 
\begin{equation}\label{eq:1.6}
H^*_{G \times \C^\times}(\mr{pt}) \cong \mc O (\mf g \oplus \C )^G ,
\end{equation}
which acts naturally on $\End^*_{\mc D^b_{G \times \C^\times}(\mf g_N)}(K_N)$. The completion
of \eqref{eq:1.6} with respect to the maximal ideal determined by $(\mr{Ad}(G) \sigma,r)$
is denoted $\hat{H}^*_{G \times \C^\times} (\mr{pt})_{\sigma,r}$.

Fix $(\sigma,r)$ as above and write
\[
\mf g_N^{\sigma,r} = \{ X \in \mf g : [\sigma,X] = 2rX, X \text{ is nilpotent} \} .
\]
When $r \neq 0$, the nilpotency is already guaranteed by the first condition, and 
$\mf g_N^{\sigma,r}$ is a vector space. On the other hand, when $r = 0$, $\mf g_N^{\sigma,r}$
is the nilpotent cone in $Z_{\mf g}(\sigma)$. In any case $\mf g_N^{\sigma,r}$ is an
irreducible variety. The group 
\[
C := Z_{G \times \C^\times} (\sigma) = Z_G (\sigma) \times \C^\times
\] 
acts on $\mf g_N^{\sigma,r}$ like in \eqref{eq:1.7}:
\begin{equation}\label{eq:1.8}
(g,\lambda) \cdot X = \lambda^{-2} \mr{Ad}(g) X .
\end{equation}
This action has only finitely many orbits \cite[\S 5.4]{KaLu}, so by the aforementioned
irreducibility of $\mf g_N^{\sigma,r}$
\begin{equation}\label{eq:1.21}
\text{there is a unique open $C$-orbit in } \mf g_N^{\sigma,r} .
\end{equation}
We record the projection and inclusion maps
\[
\mf g_N^{\sigma,r} \xleftarrow{\mr{pr}_{1,N}} \dot{\mf g}_N^{\sigma,r} = 
\mr{pr}_{1}^{-1} (\mf g_N^{\sigma,r}) \xrightarrow{j_{N,\sigma,r}} \dot{\mf g}_N .
\]
With these we define
\[
K_{N,\sigma,r} = (\mr{pr}_{1,N})_! j_{N,\sigma,r}^* (\dot{q\cE}_N) 
\in \mc D_C^b (\mf g_N^{\sigma,r} ).
\]
It was checked in \cite[\S 5.3 and \S 8.12]{Lus-Cusp2} and \cite[Lemma 2.8]{SolSGHA} that this 
is a semisimple complex. The commutative graded algebra $H_C^* (\mr{pt})$ acts naturally on 
$\End_{\mc D^b_C (\mf g_N^{\sigma,r})}(K_{N,\sigma,r})$, by the product in equivariant cohomology.

\begin{thm}\label{thm:1.2} \textup{\cite[Theorem 2.4]{SolKL}} \\
There are natural algebra isomorphisms
\begin{align*}
& \hat Z (\mh H (G,M,q\cE))_{\sigma,r} \underset{Z (\mh H (G,M,q\cE))}{\otimes}
\mh H (G,M,q\cE) \; \isom \\
& \hat{H}^*_{G \times \C^\times} (\pt)_{\sigma,r} \underset{H_{G \times \C^\times}^* 
(\pt)}{\otimes} \End^*_{\mc D^b_{G \times \C^\times}(\mf g_N)}(K_N) \; \isom \\
& \hat{H}^*_{Z_G (\sigma) \times \C^\times} (\pt)_{\sigma,r} \underset{H_{Z_G (\sigma) \times
\C^\times}^* (\pt)} {\otimes} \End^*_{\mc D^b_{Z_G (\sigma) \times 
\C^\times}(\mf g_N^{\sigma,r})} (K_{N,\sigma,r}) .
\end{align*}
These induce equivalences of categories
\begin{align*}
\Modf{\sigma ,r} (\mh H (G,M,q\cE)) & \cong 
\Modf{\sigma ,r} \big( \End^*_{\mc D^b_{G \times \C^\times}(\mf g_N)}(K_N) \big) \\  
& \cong \Modf{\sigma ,r} \big( \End^*_{\mc D^b_{Z_G (\sigma) \times \C^\times}
(\mf g_N^{\sigma,r})}(K_{N,\sigma,r}) \big) ,
\end{align*}
where fl$,\sigma,r$ stands for finite length modules all whose irreducible subquotients admit the
central character given by $(\sigma,r)$.
\end{thm}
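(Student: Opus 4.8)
The plan is to build the three algebra isomorphisms in sequence and then deduce the categorical equivalences as a formal consequence. For the first isomorphism, I would start from Theorem \ref{thm:1.12}, which identifies $\mh H (G,M,q\cE)$ with $\End^*_{\mc D^b_{G \times \C^\times}(\mf g)}(K)$, and base-change both sides along the map $Z(\mh H (G,M,q\cE)) \to \hat Z (\mh H (G,M,q\cE))_{\sigma,r}$. On the Hecke algebra side the centre contains $\mc O(\mf t \oplus \C)^{W_{q\cE}}$, and completing at the maximal ideal $\ker(\mr{ev}_{(\sigma,r)})$ is harmless because $\mh H (G,M,q\cE)$ is finitely generated as a module over its centre. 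On the geometric side, the action of $H^*_{G \times \C^\times}(\mr{pt}) \cong \mc O(\mf g \oplus \C)^G$ on $\End^*(K)$ factors through the same commutative subalgebra (via \eqref{eq:1.1}), so the two completions agree and the isomorphism of Theorem \ref{thm:1.12} survives the base change. This step is essentially bookkeeping once one checks that the two descriptions of the centre are compatible, which is already in \cite{Lus-Cusp1}.

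The heart of the argument is the second isomorphism, relating equivariant endomorphism algebras on $\mf g$ (or $\mf g_N$) to those on the slice $\mf g^{\sigma,r}$ with the reduced structure group $Z_G(\sigma) \times \C^\times$. Here I would invoke the localization/hyperbolic-restriction technology: after completing at $(\mr{Ad}(G)\sigma, r)$, only the part of $K_N$ supported near the locus where the semisimple part of the parameter is $G$-conjugate to $\sigma$ contributes, and a Morse-theory or contracting-$\C^\times$ argument (as in \cite{KaLu}, \S 5, and its adaptation in \cite{Lus-Cusp2}) identifies this with the analogous construction for the centralizer $Z_G(\sigma)$ acting on $\mf g^{\sigma,r}$. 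Concretely, one shows $K_{N,\sigma,r} = (\mr{pr}_{1,N})_! j_N^*(\dot{q\cE}_N)$ is obtained from $K_N$ by restriction along the inclusion of the $\sigma$-eigenspace, and that this restriction induces an isomorphism on the completed endomorphism algebras. I expect this to be the main obstacle: one must track the grading shifts, verify that no cohomology is lost in passing to the slice (semisimplicity of $K_{N,\sigma,r}$, already noted from \cite[Lemma 2.8]{SolSGHA}, is what makes this work), and match the $H_C^*(\mr{pt})$-module structures on both sides. The cleanest route is to cite \cite[Theorem 2.4]{SolKL} directly, since that is exactly where this comparison is carried out; in the body of the paper this theorem is being recalled, not reproved.

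Finally, the passage from $\End^*$ on $\mf g$ to $\End^*$ on $\mf g_N$ uses that $K_N$ is the pullback of $K$ along $\mf g_N \hookrightarrow \mf g$ and that this pullback induces an isomorphism of graded endomorphism algebras, which is \cite[\S 2.2]{SolSGHA}; the same holds after completion. For the equivalences of categories, I would argue formally: a finite-length module over $\mh H (G,M,q\cE)$ all of whose irreducible subquotients have central character $(W_{q\cE}\sigma, r)$ is the same thing as a module over the completed algebra $\hat Z(\mh H (G,M,q\cE))_{\sigma,r} \otimes_{Z} \mh H (G,M,q\cE)$ that is finite-dimensional and supported at the maximal ideal, and the three isomorphisms of completed algebras then transport these module categories into one another, compatibly with the notion of central character on each side. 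The only point needing a word is that "finite length with prescribed central character" translates correctly under completion, which follows because the completed algebra is module-finite over the complete local Noetherian ring $\hat Z(\mh H (G,M,q\cE))_{\sigma,r}$, so its finite-length modules supported at the maximal ideal are exactly the finite-dimensional ones.
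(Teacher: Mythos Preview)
Your proposal is correct, and you have identified the essential point: the paper does not prove Theorem~\ref{thm:1.2} at all. It is stated with the citation \cite[Theorem 2.4]{SolKL} and used as a black box; there is no accompanying proof in the body of the paper. Your sketch of how the argument in \cite{SolKL} goes --- base-changing Theorem~\ref{thm:1.12} along completion, then invoking the Lusztig/Kazhdan--Lusztig localization to pass from $(G \times \C^\times, \mf g_N)$ to $(Z_G(\sigma) \times \C^\times, \mf g_N^{\sigma,r})$, and finally reading off the module-category equivalences formally --- is accurate and matches the structure of the proof in the cited reference.
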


In particular Theorem \ref{thm:1.2} can be used to study all irreducible or standard \\
$\mh H (G,M,q\cE)$-modules with central character given by $(\sigma,r)$.

Since the semisimple complexes $K_N$ and $K_{N,\sigma,r}$ are so important in this paper,
we provide alternative descriptions. Consider the spaces and maps
\[
\cC_v^M \to \mf m_N \xleftarrow{\mr{pr}_{\mf m_N}} \mf m_N \oplus \mf u \to 
G \times^P  (\mf m_N \oplus \mf u) \xrightarrow{\mu_N} \mf g_N ,
\]
where $\mu_N (g,X) = (g, \mr{Ad}(g) X)$. By \cite[\S 2.6.3]{BeLu}, for any 
$P \times \C^\times$-variety $Y$ there is an equivalence of categories 
\[
\ind_{P \times \C^\times}^{G \times \C^\times} : 
\mc D^b_{P \times \C^\times} (Y) \to \mc D^b_{G \times \C^\times} (G \times^P Y) ,
\]
sometimes called equivariant induction. The functor
\[
\mu_{N,!} \ind_{P \times \C^\times}^{G \times \C^\times} \mr{pr}_{\mf m_N}^* :
\mc D^b_{P \times \C^\times} (\mf m_N) \to \mc D^b_{G \times \C^\times} (\mf g_N)
\]
can be regarded as a version of parabolic induction for equivariant constructible sheaves.
Notice that we may view $q\cE$ as a $P \times \C^\times$-equivariant sheaf on which $U$
acts trivially. By \cite[(2.30)--(2.33)]{SolSGHA} there is an isomorphism
\begin{equation}\label{eq:1.30}
K_N \cong \mu_{N,!} \ind_{P \times \C^\times}^{G \times \C^\times} \mr{pr}_{\mf m_N}^* 
\IC_{P \times \C^\times} (\mf m_N, q\cE) .
\end{equation}
Let $T_{\sigma,r}$ be the smallest algebraic torus in $G^\circ \times \C^\times$ whose
Lie algebra contains $(\sigma,r)$. It was checked in \cite[(2.9)]{SolKL} that
\begin{equation}\label{eq:1.31}
\mf g_N^{T_{\sigma,r}} = \mf g_N^{\sigma,r} \quad \text{and} \quad
\dot{\mf g}_N^{\sigma,r} = (\dot{\mf g}_N)^{T_{\sigma,r}} = \dot{\mf g} \cap
\big( \mf g_N^{\sigma,r} \times (G/P)^{\exp (\C \sigma)} \big) .
\end{equation}
We let $T_{\sigma,r}$ act on $G \times^P (\mf m_N \oplus \mf u)$ by
$(h,z) \cdot (g,X) = (hg, z^{-2} X)$, and we let 
\[
i_{N,\sigma,r} : (G \times^P \mf m_N \oplus \mf u)^{T_{\sigma,r}} \to
G \times^P (\mf m_N \oplus \mf u)
\]
be the inclusion.

\begin{lem}\label{lem:1.14}
In $\mc D^b_{Z_G (\sigma) \times \C^\times} (\mf g_N^{\sigma,r})$ we have isomorphisms 
\begin{align*}
K_{N,\sigma,r} & \cong \mr{pr}_{1,!} \IC_{Z_G (\sigma) \times \C^\times} 
\big(\mf g_N^{\sigma,r} \times (G/P)^{\exp (\C \sigma)}, j_{N,\sigma,r}^* \dot{q\cE}_N \big) \\
& \cong \mu_{N,!} i_{N,\sigma,r}^* \ind_{P \times \C^\times}^{G \times \C^\times} 
\mr{pr}_{\mf m_N}^* \IC_{P \times \C^\times} (\mf m_N, q\cE) .
\end{align*}
\end{lem}
\begin{proof}
Consider the commutative diagram
\begin{equation}\label{eq:1.33}
\xymatrix{
& (G \times^P \mf m_N \oplus \mf u )^{T_{\sigma,r}} \ar[dl]_{\mu_N} \ar[r]^{i_{N,\sigma,r}} &
G \times^P (\mf m_N \oplus \mf u ) \ar[dr]^{\mu_N} & \\
\mf g_N^{\sigma,r} = \mf g_N^{T_{\sigma,r}} \ar[r]^{\mr{pr}_1} & 
\dot{\mf g}_N^{\sigma,r} = (\dot{\mf g}_N )^{T_{\sigma,r}}  \ar[r]^{j_{N,\sigma,r}} 
\ar[u]^{j_{\mf m_N}} & \dot{\mf g}_N \ar[u]^{j_{\mf m_N}} \ar[r]^{\mr{pr}_1} & \mf g_N
}
\end{equation}
where $j_{\mf m_N} (X,gP) = (g, \mr{Ad}(g) X)$. With the left half of the diagram we rewrite
\begin{equation}\label{eq:1.32}
K_{N,\sigma,r} = \mr{pr}_{1,!} j^*_{N,\sigma,r} \dot{q\cE}_N = \mu_{N,!} j_{\mf m_N,!}
j^*_{N,\sigma,r} \dot{q\cE}_N .
\end{equation}
Like in \eqref{eq:1.31}, one checks that 
\begin{align*}
(G \times^P \mf m_N \oplus \mf u )^{T_{\sigma,r}} & \cong \{ (X,gP) \in \mf g_N 
\times G/P : \mr{Ad}(g^{-1}) X \in \mf m_N \oplus \mf u \}^{T_{\sigma,r}} \\
& \cong \mf g_N^{\sigma,r} \times (G/P)^{\exp (\C \sigma)} .
\end{align*}
Via this isomorphism $j_{\mf m_N}$ becomes $\dot{\mf g}_N^{\sigma,r} \to \mf g_N^{\sigma,r}
\times (G/P)^{\exp (\C \sigma)}$ and $\mu_N$ becomes $\mr{pr}_{1}$. As $j^*_{N,\sigma,r} 
\dot{q\cE}_N$ lies in $\mc D^b_{Z_G (\sigma) \times \C^\times} (\dot{\mf g}_N^{\sigma,r})$, 
this turns the right hand side of \eqref{eq:1.32} into 
\[
\mr{pr}_{1,!} \IC_{Z_G (\sigma) \times \C^\times} \big(\mf g_N^{\sigma,r} \times 
(G/P)^{\exp (\C \sigma)}, j_{N,\sigma,r}^* \dot{q\cE}_N \big) ,
\]
which proves the first isomorphism in the statement. 

The square in \eqref{eq:1.33} is Cartesian, so by base change \cite[Theorem 3.4.3]{BeLu},
\eqref{eq:1.32} becomes $\mu_{N,!} i^*_{N,\sigma,r} j_{\mf m_N,!} \dot{q\cE}_N$. By
\cite[(2.22)]{SolSGHA}, this is isomorphic to
\begin{multline*}
\mu_{N,!} i^*_{N,\sigma,r} j_{\mf m_N,!} \mr{ind}_{P \times \C^\times}^{G \times \C^\times}
\mr{pr}_{\mf m_N}^* q\cE \cong 
\mu_{N,!} i^*_{N,\sigma,r} \mr{ind}_{P \times \C^\times}^{G \times \C^\times} (j_{\mf m_N} 
|_{\cC_v^M \oplus \mf u} )_! \mr{pr}_{\mf m_N}^* q\cE \cong \\
\mu_{N,!} i^*_{N,\sigma,r} \mr{ind}_{P \times \C^\times}^{G \times \C^\times} 
\mr{pr}_{\mf m_N}^* (j_{\mf m_N} |_{\cC_v^M} )_!  q\cE \cong
 \mu_{N,!} i^*_{N,\sigma,r} \mr{ind}_{P \times \C^\times}^{G \times \C^\times} 
\mr{pr}_{\mf m_N}^* \IC_{P \times \C^\times} (\mf m_N, q\cE) . \qedhere
\end{multline*}
\end{proof}

Comparing \eqref{eq:1.30} and Lemma \ref{lem:1.14}, we see that $K_{N,\sigma,r}$ can
be regarded as a restricted parabolic induction of $\IC (\mf m_N, q\cE)$.

\section{The internal structure of standard modules}
\label{sec:internal}

From Theorem \ref{thm:1.2} one sees that all irreducible or standard $\mh H(G,M,q\cE)$-modules with
central character $(W_{q\cE} \sigma - r\sigma_v,r)$ arise in some way from the semisimple complex 
$K_{N,\sigma,r}$ on $\mf g_N^{\sigma,r}$. For $y \in \mf g_N^{\sigma,r}$ we write
\[
C_y = Z_C (y) = Z_{Z_{G \times \C^\times} (\sigma)}(y) = 
(Z_G (\sigma) \times \C^\times) \cap Z_{G \times \C^\times} (y) ,
\] 
where $G \times \C^\times$ acts as in \eqref{eq:1.8}. Let 
\[
\mc O_y = \mr{Ad}(C) y \subset \mf g_N^{\sigma,r}
\]
be the $C$-orbit of $y$. The equivalence of categories
\begin{equation}\label{eq:1.9}
\ind_{C_y}^C : \mc D^b_{C_y} (\{y\}) \to \mc D^b_C (\mc O_y)
\end{equation}
transforms any representation $\rho$ of $\pi_0 (C_y)$ into a $C$-equivariant local system on 
$\mc O_y$. We form the (equivariant) intersection cohomology complex $\IC_C (\mf g_N^{\sigma,r},
\ind_{C_y}^C (\rho))$, a $C$-equivariant perverse sheaf on $\mf g_N^{\sigma,r}$. In the literature
this object is often denoted $\IC_C (\mc O_y ,\ind_{C_y}^C (\rho))$, but we prefer a notation 
that specifies the variety on which it is defined.

\begin{thm}\label{thm:1.3} \textup{\cite[Theorem 4.2]{SolKL}} \\
Every simple direct summand of $K_{N,\sigma,r}$ is (up to a degree shift) isomorphic to
$\IC_C (\mf g_N^{\sigma,r},\ind_{C_y}^C (\rho))$, for $(y,\rho)$ such that $(y,\sigma,r,\rho)$ is
an enhanced L-parameter for $\mh H (G,M,q\cE)$. Conversely, for every such $(y,\sigma,r,\rho)$, 
$\IC_C (\mf g_N^{\sigma,r},\ind_{C_y}^C (\rho))$ is (up to a degree shift) a direct summand of 
$K_{N,\sigma,r}$.
\end{thm}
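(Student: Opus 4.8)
\emph{Overview of the plan.} I would first read off the possible \emph{shape} of the simple summands of $K_{N,\sigma,r}$ from the decomposition theorem, and then pin them down by transferring the question to the representation theory of $\mh H (G,M,q\cE)$ via Theorem \ref{thm:1.2}. For the shape: by \cite[Lemma 2.8]{SolSGHA} the complex $K_{N,\sigma,r}$ is semisimple, and it is $C$-equivariant on $\mf g_N^{\sigma,r}$, a variety on which $C = Z_G (\sigma) \times \C^\times$ acts with only finitely many orbits \cite[\S 5.4]{KaLu}. Hence every simple perverse constituent of $K_{N,\sigma,r}$ is, up to a degree shift, an equivariant intersection cohomology complex $\IC_C (\overline{\mc O},\mc L)$ for some $C$-orbit $\mc O \subset \mf g_N^{\sigma,r}$ and some irreducible $C$-equivariant local system $\mc L$ on $\mc O$. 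Writing $\mc O = \mc O_y$ and invoking the equivalence \eqref{eq:1.9}, $\mc L = \ind_{C_y}^C (\rho)$ for a unique $\rho \in \Irr (\pi_0 (C_y))$. Thus every simple summand already has the required form, and the content of the theorem is to determine which pairs $(\mc O_y,\rho)$ occur.

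\emph{The matching.} Next I would write $K_{N,\sigma,r} = \bigoplus_\xi A_\xi \otimes V_\xi$, where $A_\xi$ ranges over the finitely many distinct simple perverse sheaves $\IC_C (\overline{\mc O_{y_\xi}},\ind_{C_{y_\xi}}^C (\rho_\xi))$ occurring and $V_\xi$ is the corresponding graded multiplicity space. By the structure theory of graded endomorphism algebras of semisimple complexes, in the style of Chriss and Ginzburg (see \cite{AMS2,SolKL}), the simple objects of $\Modf{\sigma,r}\big(\End^*_{\mc D^b_C (\mf g_N^{\sigma,r})}(K_{N,\sigma,r})\big)$ are indexed by the $\xi$, with $L_\xi$ the head of a standard module built from the costalk $i_{y_\xi}^! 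K_{N,\sigma,r}$ together with the $\rho_\xi$-isotypic projection for the natural $\pi_0 (C_{y_\xi})$-action. Theorem \ref{thm:1.2} identifies this category with $\Modf{\sigma,r}(\mh H (G,M,q\cE))$, compatibly with standard modules, and by the construction of \cite{AMS2} (cf.\ $E_{y,\sigma,r} = H^* (\{y\}, i_y^! K_{N,\sigma,r})$ and $E_{y,\sigma,r,\rho} = \Hom_{\pi_0 (C_y)}(\rho, E_{y,\sigma,r})$) the standard module attached to $\xi$ matches $E_{y_\xi,\sigma,r,\rho_\xi}$, so that $L_\xi \cong M_{y_\xi,\sigma,r,\rho_\xi}$. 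Since by Theorem \ref{thm:1.1} the assignment $(y,\sigma,r,\rho) \mapsto M_{y,\sigma,r,\rho}$ is a bijection from $G$-association classes of enhanced L-parameters with central character $(\sigma,r)$ onto the irreducibles of $\mh H (G,M,q\cE)$ with that central character, the resulting map $\xi \mapsto (y_\xi,\sigma,r,\rho_\xi)$ is a bijection onto these parameters. This gives both assertions at once: every simple summand $A_\xi$ has $(y_\xi,\sigma,r,\rho_\xi)$ an enhanced L-parameter, and conversely every enhanced L-parameter $(y,\sigma,r,\rho)$ equals some $(y_\xi,\sigma,r,\rho_\xi)$, i.e.\ $\IC_C (\mf g_N^{\sigma,r},\ind_{C_y}^C (\rho))$ occurs in $K_{N,\sigma,r}$.

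\emph{The main obstacle.} The delicate step is the identification $L_\xi \cong M_{y_\xi,\sigma,r,\rho_\xi}$: that the Chriss--Ginzburg-type parametrization of $\Irr\End^*(K_{N,\sigma,r})$ by simple summands is compatible with the costalk description of the standard modules of $\mh H (G,M,q\cE)$, and that the equivalence of Theorem \ref{thm:1.2} genuinely carries geometric standard modules on the two sides to one another. At bottom this is a triangularity statement for the costalks: by the strict costalk-support property of intersection cohomology, for $y \in \mc O_y$ the complex $i_y^! A_\eta$ lives in degrees $> -\dim \mc O_y$ when $\mc O_y \subsetneq \overline{\mc O_{y_\eta}}$, whereas $i_y^! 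A_\eta = \big(\ind_{C_y}^C (\rho_\eta)|_y\big)[\dim \mc O_y]$ when $\mc O_{y_\eta} = \mc O_y$, so that the extreme-degree layer of $i_y^! K_{N,\sigma,r}$ reproduces $\bigoplus_{\eta :\, \mc O_{y_\eta} = \mc O_y} \rho_\eta \otimes V_\eta$ as a $\pi_0 (C_y)$-representation and thereby detects precisely which summands are supported on $\overline{\mc O_y}$ and with which local systems. Carrying this through requires care with the various degree-shift conventions (equivariant dimensions, the sign normalization, and the separate case $r = 0$, where $\mf g_N^{\sigma,r}$ is a nilpotent cone rather than a vector space). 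A more geometric alternative would be to use the proper base change $K_{N,\sigma,r} \cong \iota^* K_N$ along $\iota : \mf g_N^{\sigma,r} \hookrightarrow \mf g_N$, feed in the decomposition of $K_N$ from the generalized Springer correspondence \cite{Lus-Cusp1,AMS1}, and prove a transversality lemma asserting that $\iota^*$ sends each summand $\IC_{G \times \C^\times}(\overline{\mc C},-)$ of $K_N$ to a shifted equivariant $\IC_C$ on $\mf g_N^{\sigma,r}$; establishing that transversality would then be the geometric heart of the matter.
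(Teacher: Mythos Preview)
The paper does not give a proof of this theorem; it simply cites \cite[Theorem 4.2]{SolKL}. So there is no in-paper argument to compare against, and your proposal should be assessed as a reconstruction of that cited result.

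Your outline is sound and is in the spirit of how such statements are proved in \cite{Lus-Cusp2,AMS2,SolKL}. The first reduction (semisimplicity plus finitely many $C$-orbits forces every simple summand to be an equivariant $\IC$ for some $(\mc O_y,\rho)$) is exactly right. For the matching, your two-step plan---identify simple summands with irreducibles of $\End^*_{\mc D^b_C}(K_{N,\sigma,r})$ via the Chriss--Ginzburg machinery, then transport to $\Irr(\mh H(G,M,q\cE))$ via Theorem \ref{thm:1.2} and invoke the parametrization of Theorem \ref{thm:1.1}---is a legitimate route. Note that the paper itself runs the logic in the opposite order: it first quotes Theorem \ref{thm:1.3} from \cite{SolKL}, and only \emph{afterwards} (Lemma \ref{lem:1.4}) identifies the irreducibles of the endomorphism algebra with the $\Hom^0(\IC,-)$ spaces, using Theorem \ref{thm:1.3} as input. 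Your approach inverts this, which is fine, but you must be careful not to appeal to Lemma \ref{lem:1.4} or to any consequence of Theorem \ref{thm:1.3} when carrying it out.

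You have correctly isolated the only genuinely nontrivial step: the compatibility $L_\xi \cong M_{y_\xi,\sigma,r,\rho_\xi}$. Your triangularity argument via the strict costalk-support condition on $\IC$ is the standard one and works; the paper records precisely this costalk description of $E_{y,\sigma,r}$ (attributed to \cite[\S 10]{Lus-Cusp2} and \cite[\S 3.2]{SolKL}) immediately after stating Theorem \ref{thm:1.3}, so the ingredients you need are established independently. One point to be explicit about: Theorem \ref{thm:1.1} and the construction of $M_{y,\sigma,r,\rho}$ in \cite{AMS2} are carried out on $\mf g_N$ (via $K_N$), not on $\mf g_N^{\sigma,r}$, so to close the loop you must check that the equivalence of Theorem \ref{thm:1.2} carries the costalk modules on one side to those on the other. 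That is contained in \cite[\S 3.2]{SolKL}, but it is the place where the argument could silently leak if you treat it as obvious. Your alternative route via base change $K_{N,\sigma,r} \cong \iota^* K_N$ and a transversality lemma is in fact closer to how \cite{SolKL} organizes the proof, and has the advantage of reading off the summands directly from the generalized Springer decomposition of $K_N$ rather than going through module categories.
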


With Theorems \ref{thm:1.2} and \ref{thm:1.3} and techniques from \cite{Lus-Cusp2,AMS2},
we provide a description of the irreducible $\mh H (G,M,q\cE)$-modules.

\begin{lem}\label{lem:1.4}
The irreducible modules of $\End^*_{\mc D^b_C (\mf g_N^{\sigma,r})} (K_{N,\sigma,r})$ are 
\[
M_{y,\sigma,r,\rho} = \Hom^0_{\mc D^b_C (\mf g_N^{\sigma,r})} \big(
\IC_C (\mf g_N^{\sigma,r},\ind_{C_y}^C (\rho))', K_{N,\sigma,r} \big),
\]
where $(y,\sigma,r,\rho)$ is an enhanced L-parameter for $\mh H (G,M,q\cE)$.
Here the prime indicates a suitable degree shift, so that 
$\IC_C (\mf g_N^{\sigma,r},\ind_{C_y}^C (\rho))'$ becomes a direct summand of $K_{N,\sigma,r}$.
\end{lem}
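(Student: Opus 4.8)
The plan is to deduce Lemma \ref{lem:1.4} from the general theory of perverse sheaves and the specific structure results already recalled. The statement is essentially a translation, into the language of the endomorphism algebra $A := \End^*_{\mc D^b_C (\mf g_N^{\sigma,r})} (K_{N,\sigma,r})$, of the standard fact that for a semisimple complex $K$ on a variety with finitely many $C$-orbits, the simple summands of $K$ (up to shift) biject with the simple modules of $\End^*(K)$, with the bijection given by $P \mapsto \Hom^0 (P, K)$. So first I would invoke Theorem \ref{thm:1.3}, which tells us exactly which $\IC$-sheaves $\IC_C (\mf g_N^{\sigma,r},\ind_{C_y}^C (\rho))$ occur as summands of $K_{N,\sigma,r}$ (after a degree shift): precisely those indexed by enhanced L-parameters $(y,\sigma,r,\rho)$ for $\mh H (G,M,q\cE)$. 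Fixing for each such parameter a shift so that $P_{y,\rho} := \IC_C (\mf g_N^{\sigma,r},\ind_{C_y}^C (\rho))'$ is literally a direct summand of $K_{N,\sigma,r}$, I would then write $K_{N,\sigma,r} \cong \bigoplus_{(y,\rho)} P_{y,\rho}^{\oplus m_{y,\rho}}$ with multiplicities $m_{y,\rho} \geq 1$.

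Next I would appeal to the version of Morita theory for graded endomorphism algebras of such complexes, as developed in \cite{Lus-Cusp2} and \cite{AMS2} (and recalled implicitly around Theorem \ref{thm:1.1}): the algebra $A$ is, in degree zero, a finite-dimensional algebra whose semisimplification has simple modules indexed by the isotypic components of $K_{N,\sigma,r}$, i.e.\ by the pairs $(y,\rho)$, and the simple $A$-module attached to $(y,\rho)$ is obtained as $\Hom^0_{\mc D^b_C (\mf g_N^{\sigma,r})}(P_{y,\rho}, K_{N,\sigma,r})$ with its natural left $A$-module structure (composition in the derived category). One checks that this $\Hom^0$-space is nonzero (it contains the inclusion of the summand), that distinct $(y,\rho)$ give non-isomorphic modules (by looking at which $\IC$-sheaf contributes to the socle, using $\Hom^0(P_{y,\rho},P_{y',\rho'}) = \delta$ up to the decomposition theorem and purity), and that every simple $A$-module arises this way (a simple module is a quotient of $A = \Hom^0(K_{N,\sigma,r},K_{N,\sigma,r})$, hence supported on some summand). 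Finally I would match this with the $\mh H (G,M,q\cE)$-side: by Theorem \ref{thm:1.2} the category $\Modf{\sigma,r}(\mh H (G,M,q\cE))$ is equivalent to $\Modf{\sigma,r}(A)$, and under Theorem \ref{thm:1.1} the irreducible $\mh H (G,M,q\cE)$-module with parameter $(y,\sigma,r,\rho)$ corresponds to the simple $A$-module just described; this forces $M_{y,\sigma,r,\rho} = \Hom^0_{\mc D^b_C (\mf g_N^{\sigma,r})}(P_{y,\rho}, K_{N,\sigma,r})$, which is the claim.

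The main obstacle, as I see it, is the bookkeeping of degree shifts and the comparison of normalizations between the three descriptions of irreducible modules floating around: the one from the Langlands-type classification via quotients of standard modules $E_{y,\sigma,r,\rho}$ (Theorem \ref{thm:1.1}), the one from Lusztig's perverse-sheaf microlocal / multiplicity space construction in \cite{Lus-Cusp2}, and the plain $\Hom^0$ description above. One must ensure that the parameter $(y,\rho)$ attached to a simple $A$-module via $\Hom^0(P_{y,\rho}, K_{N,\sigma,r})$ is the \emph{same} $(y,\rho)$ that labels $M_{y,\sigma,r,\rho}$ — not, say, a Fourier-transform-twisted or Springer-dual version of it. This is precisely the content that \cite{AMS2} was set up to guarantee (their parametrization of $\Irr(\mh H (G,M,q\cE))$ is built to be compatible with the geometric one), so the honest work is to cite the right statement from \cite{Lus-Cusp2,AMS2} rather than to reprove the parametrization. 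A secondary, more routine point is verifying that $\Hom^0_{\mc D^b_C (\mf g_N^{\sigma,r})}(P_{y,\rho}, K_{N,\sigma,r})$ is finite-dimensional with the stated central character — finite-dimensionality is immediate since $K_{N,\sigma,r}$ is a bounded complex with finitely many simple constituents and each $\Hom$-space between constructible complexes on a variety with finitely many orbits is finite-dimensional, and the central character statement follows from how $H_C^*(\pt) \cong \mc O(\mf g \oplus \C)^G$ acts, already recorded before Theorem \ref{thm:1.2}.
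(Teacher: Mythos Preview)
Your proposal is correct and follows essentially the same approach as the paper: both arguments reduce to the fact that after shifting summands so that $K_{N,\sigma,r}$ becomes a sum of genuine perverse sheaves, the graded endomorphism algebra has vanishing negative part, nilpotent positive part, and semisimple degree-zero part, so its simple modules are exactly the $\Hom^0$-spaces to the simple summands identified by Theorem~\ref{thm:1.3}. The paper states these three grading facts more directly (citing \cite[\S 5]{Lus-Cusp2}) rather than packaging them as ``Morita theory,'' and does not explicitly address the labeling-compatibility concern you raise, but the content is the same.
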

\begin{proof}
We use a modified version $K'_{N,\sigma,r}$ of $K_{N,\sigma,r}$. The only change is that for
every simple direct summand of $K_{N,\sigma,r}$ the degrees are shifted, so that it becomes 
an actual perverse sheaf. Thus $K'_{N,\sigma,r}$ is a direct sum of simple perverse sheaves.

Then $\End^*_{\mc D^b_C (\mf g_N^{\sigma,r})} (K'_{N,\sigma,r})$ is naturally isomorphic to
$\End^*_{\mc D^b_C (\mf g_N^{\sigma,r})} (K_{N,\sigma,r})$ as algebras, only the gradings 
are different. Recall that
\begin{align}\label{eq:1.10}
& \End^*_{\mc D^b_C (\mf g_N^{\sigma,r})} (K'_{N,\sigma,r}) =
\End^*_{\mc D^b_{C^\circ} (\mf g_N^{\sigma,r})} (K'_{N,\sigma,r})^{C / C^\circ} , \\
\label{eq:1.99} & \End^0_{\mc D^b_C (\mf g_N^{\sigma,r})} \big(
\IC_C (\mf g_N^{\sigma,r}, \ind_{C_y}^C (\rho) ) \big) \cong
\End^0_{\mc D^b_{C_y} (\{y\})} (\rho) = \End_{\pi_0 (C_y)}(\rho) = \C .
\end{align}
From \cite[\S 5]{Lus-Cusp2} and \eqref{eq:1.10}--\eqref{eq:1.99} we see that:
\begin{itemize}
\item $\End^n_{\mc D^b_C (\mf g_N^{\sigma,r})} (K'_{N,\sigma,r}) = 0$ for $n < 0$,
\item $\C_{\sigma,r} \otimes_{H^*_{C^\circ}(\mr{pt})} \bigoplus_{n \in \Z_{>0}} \End^n_{\mc D^b_C 
(\mf g_N^{\sigma,r})} (K'_{N,\sigma,r})$ is the nilpotent radical of\\ $\C_{\sigma,r} 
\otimes_{H^*_{C^\circ}(\mr{pt})} \End^*_{\mc D^b_C (\mf g_N^{\sigma,r})} (K'_{N,\sigma,r})$,
\item $\End^0_{\mc D^b_C (\mf g_N^{\sigma,r})} (K'_{N,\sigma,r})$ is finite dimensional
and semisimple.
\end{itemize}
We conclude that the irreducible modules of 
$\End^*_{\mc D^b_C (\mf g_N^{\sigma,r})} (K_{N,\sigma,r})$ can be identified with the irreducible
modules of $\End^0_{\mc D^b_C (\mf g_N^{\sigma,r})} (K'_{N,\sigma,r})$. By Theorem \ref{thm:1.3},
those are the 
\[
\Hom^0_{\mc D^b_C (\mf g_N^{\sigma,r})} 
\big( \IC_C (\mf g_N^{\sigma,r},\ind_{C_y}^C (\rho)), K'_{N,\sigma,r} \big) ,
\]
where $(y,\sigma,r,\rho)$ is an enhanced L-parameter for $\mh H (G,M,q\cE)$. 
\end{proof}

With Theorem \ref{thm:1.3} and Lemma \ref{lem:1.4} we can decompose
\begin{equation}\label{eq:1.28}
K_{N,\sigma,r} \cong \bigoplus\nolimits_{y,\rho} 
\IC_C (\mf g_N^{\sigma,r}, \ind_{C_y}^C (\rho)) \otimes M_{y,\sigma,r,\rho} .
\end{equation}
We warn that, while this is a direct sum of constructible sheaves, there may be nonzero morphisms 
(in higher degrees) between different intersection cohomology complexes in the sum. 

Let $i_y : \{y\} \to \mf g_N^{\sigma,r}$ and $i_{\mc O_y} : \mc O_y \to \mf g_N^{\sigma,r}$ be the
inclusions. From \cite[\S 10]{Lus-Cusp2} and \cite[\S 3.2]{SolKL} we see that one way to define
the standard $\mh H (G,M,q\cE)$-modules is:
\[
\begin{array}{lll}
E_{y,\sigma,r} & = & H^* (\{y\}, i_y^! K_{N,\sigma,r}) ,\\
E_{y,\sigma,r,\rho} & = & \Hom_{\pi_0 (C_y)} \big( \rho, E_{y,\sigma,r} \big) .
\end{array}
\]
Here and later $H^* (?)$ is an abbreviation of $\bigoplus_{n \in \Z} H^n (?)$.
The action of $\mh H (G,M,q\cE)$ comes from Theorem \ref{thm:1.2} and 
$\End^*_{\mc D^b_C (\mf g_N^{\sigma,r})} (K_{N,\sigma,r})$, which acts via the natural 
homomorphism to $\End^*_{\mc D^b_{C_y} (\{y\})} (i_y^! K_{N,\sigma,r})$.
To define a filtration on $E_{y,\sigma,r}$, we first identify it with $H^* (\{y\}, i_y^!
K'_{N,\sigma,r})$ as $\End^*_{\mc D^b_C (\mf g_N^{\sigma,r})} (K'_{N,\sigma,r})$-module,
as in the proof of Lemma \ref{lem:1.4}. Generalizing \cite[\S 10.2]{Lus-Cusp2} we put, for 
$n \in \Z$:
\[
E^n_{y,\sigma,r} = H^n (\{y\}, i_y^! K'_{N,\sigma,r}) , \qquad
E^{\geq n}_{y,\sigma,r} = \bigoplus\nolimits_{n' \geq n} H^{n'} (\{y\}, i_y^! K'_{N,\sigma,r}) .
\]
Then $E^{\geq n}_{y,\sigma,r}$ is a submodule of $E_{y,\sigma,r}$, because the action of
$\End^*_{\mc D^b_C (\mf g_N^{\sigma,r})} (K'_{N,\sigma,r})$ is graded and
$\End^k_{\mc D^b_C (\mf g_N^{\sigma,r})} (K'_{N,\sigma,r}) = 0$ for $k < 0$. 
We note that
\begin{equation}\label{eq:1.91}
\begin{aligned}
& E^n_{y,\sigma,r} \text{ is a quotient of } E_{y,\sigma,r} \text{ if } n 
\text{ is minimal for } H^n (\{y\}, i_y^! K'_{N,\sigma,r}) \neq 0 ,\\
& E^n_{y,\sigma,r} \text{ is a submodule of } E_{y,\sigma,r} \text{ if } n 
\text{ is maximal for } H^n (\{y\}, i_y^! K'_{N,\sigma,r}) \neq 0 .
\end{aligned}
\end{equation}
From this filtration of $E_{y,\sigma,r}$ we get the graded module
\begin{equation}\label{eq:1.92}
\mr{gr} \, E_{y,\sigma,r} := 
\bigoplus\nolimits_{n \in \Z} E^{\geq n}_{y,\sigma,r} / E^{\geq n + 1}_{y,\sigma,r} .
\end{equation}
This means in effect that the action of $\End^k_{\mc D^b_C (\mf g_N^{\sigma,r})} (K'_{N,\sigma,r})$
with $k > 0$ is killed off. 
Let us denote equality in the Grothendieck of finite length $\End^*_{\mc D^b_C (\mf g_N^{\sigma,r})} 
(K_{N,\sigma,r})$-modules by $\dot{=}$. Like in \cite[\S 10.3]{Lus-Cusp2} we deduce from
\eqref{eq:1.28} and \eqref{eq:1.92} that
\begin{equation}\label{eq:1.11}
E_{y,\sigma,r} \,\dot{=}\, \mr{gr} \, E_{y,\sigma,r} = 
\bigoplus\nolimits_{y',\rho'} H^* \big( \{y\}, i_y^! \IC_C (\mf g_N^{\sigma,r},
\ind_{C_{y'}}^C (\rho') ) \big) \otimes M_{y',\sigma,r,\rho'} ,
\end{equation}
where the sum runs over all enhanced L-parameters $(y',\sigma,r,\rho')$ for $\mh H (G,M,q\cE)$.
On the right hand side of \eqref{eq:1.11}, $\mh H (G,M,q\cE)$ acts only on the tensor factors
$M_{y',\sigma,r,\rho'}$. 
The action of $\pi_0 (C_y)$ on $E_{y,\sigma,r}$ and $\mr{gr} \, E_{y,\sigma,r}$ corresponds to 
the natural action of $\pi_0 (C_y)$ on the tensor factors $H^* (\{y\}, \ldots)$ in \eqref{eq:1.11}.

Recall that $i_y^! = D i_y^* D$ where $D$ denotes the Verdier duality operator. For $\rho \in 
\Irr (\pi_0 (C_y)) \subset \mc D^b_{C_y} (\{y\})$ we have $D \rho = \rho^\vee$, the contragredient 
representation. The analogue of \eqref{eq:1.11} for $E_{y,\sigma,r,\rho}$ involves the space
\begin{equation}\label{eq:1.29}
\begin{aligned}
& \Hom_{\pi_0 (C_y)} \big( \rho, H^* \big( \{y\}, i_y^! \IC_C (\mf g_N^{\sigma,r},
\ind_{C_{y'}}^C (\rho') ) \big) \big) \cong \\
& \Hom_{\pi_0 (C_y)} \big( H^* \big( \{y\}, i_y^* D \, \IC_C (\mf g_N^{\sigma,r},
\ind_{C_{y'}}^C (\rho') ) \big), \rho^\vee \big) \cong \\
& \Hom_{\pi_0 (C_y)} \big( H^* \big( \{y\}, i_y^* \IC_C (\mf g_N^{\sigma,r},
\ind_{C_{y'}}^C (\rho'^\vee) ) \big), \rho^\vee \big) .
\end{aligned}
\end{equation}
By definition the dimension of the last line of \eqref{eq:1.29} is the multiplicity of $\rho^\vee$ in 
$i_y^* \IC_C (\mf g_N^{\sigma,r},\ind_{C_{y'}}^C (\rho'^\vee) )$. 
Equivalently the dimension of \eqref{eq:1.29} equals
\begin{equation}\label{eq:1.20}
\text{the multiplicity of } \ind_{C_y}^C (\rho^\vee) \text{ in }
i_{\mc O_y}^* \IC_C (\mf g_N^{\sigma,r},\ind_{C_{y'}}^C (\rho'^\vee) ).
\end{equation} 
By \cite[\S 10.6]{Lus-Cusp2}, or by applying complex conjugation to both sides, that equals
\begin{equation}\label{eq:1.12}
\text{the multiplicity } \mu (y,\rho,y',\rho') \text{ of } \ind_{C_y}^C (\rho) \text{ in }
i_{\mc O_y}^* \IC_C (\mf g_N^{\sigma,r},\ind_{C_{y'}}^C (\rho') ). 
\end{equation}
Notice that \eqref{eq:1.20} and \eqref{eq:1.12} can only be nonzero if $\mc O_y \subset 
\overline{\mc O_{y'}}$. As in \cite[Corollary 10.7]{Lus-Cusp2} and \cite[Proposition 5.1]{SolKL}, 
\eqref{eq:1.11} and \eqref{eq:1.12} determine the semisimplification of standard modules, namely
\begin{equation}\label{eq:1.13}
E_{y,\sigma,r,\rho} \;\dot{=}\, \text{ direct sum of the } M_{y',\sigma,r,\rho'} 
\text{ with multiplicities } \eqref{eq:1.12} .
\end{equation}
To obtain more information about the submodules of $E_{y,\sigma,r}$, we will provide 
a geometric construction of some special homomorphisms between standard modules, by
employing the underlying constructible sheaves in a more concrete way.

\begin{prop}\label{prop:1.5}
Let $y, y' \in \mf g_N^{\sigma,r}$.
\enuma{
\item If $r \neq 0$ and $\mc O_y \not\subset \overline{\mc O_{y'}}$, then
$\Hom_{\mh H (G,M,q\cE)} (E_{y',\sigma,r}, E_{y,\sigma,r}) = 0$.
\item Suppose that $\mc O_y \subset \overline{\mc O_{y'}}$. There is a homomorphism of 
$\mh H (G,M,q\cE)$-modules 
\[
J_{y',y} : E_{y',\sigma,r} \to E_{y,\sigma,r} ,
\]
canonical up to the action of $\pi_0 (C_{y'})$.}
\end{prop}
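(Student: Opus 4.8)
The plan is to combine the Langlands classification for $\mh H (G,M,q\cE)$ with the known Jordan--H\"older content of standard modules. Since $r \neq 0$, Theorem~\ref{thm:1.1} gives that $E_{y',\sigma,r,\rho'}$ has a \emph{unique} irreducible quotient, $M_{y',\sigma,r,\rho'}$, for every enhanced L-parameter $(y',\sigma,r,\rho')$; and $E_{y',\sigma,r}$ decomposes, as a $\pi_0 (C_{y'})$-module, into isotypic pieces $E_{y',\sigma,r,\rho'} \otimes \rho'$, where by the generalized Springer correspondence only those $\rho'$ with $(y',\sigma,r,\rho')$ an enhanced L-parameter occur (the restriction of $K_{N,\sigma,r}$ to a $C$-orbit involves only local systems lying in the block of $(M,\cC_v^M,q\cE)$, cf.\ \cite{Lus-Cusp1}). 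Hence an $\mh H (G,M,q\cE)$-homomorphism out of $E_{y',\sigma,r}$ is a collection of maps $\varphi : E_{y',\sigma,r,\rho'} \to E_{y,\sigma,r}$, and it suffices to show each such $\varphi$ is zero. If $\varphi$ were nonzero, its image would be a nonzero quotient of $E_{y',\sigma,r,\rho'}$, hence would have $M_{y',\sigma,r,\rho'}$ as a composition factor, while also being a submodule of $E_{y,\sigma,r}$. But by \eqref{eq:1.11}--\eqref{eq:1.13} every composition factor $M_{y'',\sigma,r,\rho''}$ of $E_{y,\sigma,r}$ enters with the multiplicity \eqref{eq:1.12}, which vanishes unless $\mc O_y \subset \overline{\mc O_{y''}}$; taking $y'' = y'$ gives $\mc O_y \subset \overline{\mc O_{y'}}$, contradicting the hypothesis. (Alternatively, $\Modf{\sigma,r}(\mh H (G,M,q\cE))$ is a highest weight category with the $E_{y'',\sigma,r,\rho''}$ as standard objects ordered by the closure order on nilpotent orbits, so homomorphisms between standard objects respect that order.)

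\textbf{Plan for (b).} Here the idea is to realize $J_{y',y}$ as a specialization map along a degeneration of $y'$ onto $y$. As in \cite[\S 10]{Lus-Cusp2} and \cite[\S 3.2]{SolKL}, the costalk description identifies $E_{y,\sigma,r} = H^*(\{y\}, i_y^! K_{N,\sigma,r})$, up to a fixed shift and Tate twist, with the Borel--Moore homology $H_*^{\mr{BM}}(\pr_{1,N}^{-1}(y))$ of the fibre of $\pr_{1,N} : \dot{\mf g}_N^{\sigma,r} \to \mf g_N^{\sigma,r}$ over $y$, with coefficients in the restriction of $\dot{q\cE}_N$; as $y$ ranges over $\mf g_N^{\sigma,r}$ these fibres form a family. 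Since $\mc O_y$ lies in the closure of the dense open orbit $\mc O_{y'}$ of the irreducible variety $\overline{\mc O_{y'}}$, we may pick a morphism $c$ from the germ of a smooth curve into $\overline{\mc O_{y'}}$, sending the closed point to $y$ and the generic point into $\mc O_{y'}$. Pulling the above family back along $c$ and applying the specialization map in Borel--Moore homology (in the sense of Chriss--Ginzburg; compare the cohomological filtration of $E_{y,\sigma,r}$ in \cite[\S 10.2]{Lus-Cusp2}) yields, for a generic point $c(t) \in \mc O_{y'}$,
\[
E_{y',\sigma,r} \;\cong\; H_*^{\mr{BM}}\big(\pr_{1,N}^{-1}(c(t))\big) \longrightarrow H_*^{\mr{BM}}\big(\pr_{1,N}^{-1}(c(0))\big) = H_*^{\mr{BM}}\big(\pr_{1,N}^{-1}(y)\big) \;\cong\; E_{y,\sigma,r} ,
\]
the left isomorphism being via a path in $\mc O_{y'}$ from $c(t)$ to $y'$. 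This composite is $J_{y',y}$.

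That $J_{y',y}$ is $\mh H (G,M,q\cE)$-linear follows from naturality: by Theorem~\ref{thm:1.2} the $\mh H (G,M,q\cE)$-action on all the modules in sight is induced, through $\End^*_{\mc D^b_C(\mf g_N^{\sigma,r})}(K_{N,\sigma,r})$, by functoriality in $K_{N,\sigma,r}$, while the specialization map commutes with proper pushforward, with (co)restriction to locally closed pieces, and with the convolution product, hence with that action. For the canonicity statement, the choices made are the degeneration $c$ together with the identification of $\pr_{1,N}^{-1}(c(t))$ with $\pr_{1,N}^{-1}(y')$; any two admissible choices differ by a loop in $\mc O_{y'}$ based at $y'$, and the monodromy of such a loop on $H_*^{\mr{BM}}(\pr_{1,N}^{-1}(y'))$ with coefficients in $\dot{q\cE}_N$ factors through $\pi_0 (C_{y'})$, since $\mc O_{y'} \cong C / C_{y'}$ and $\dot{q\cE}_N$ restricts over $\mc O_{y'}$ to the local system attached to a representation of $\pi_0 (C_{y'})$. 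Thus $J_{y',y}$ is well defined up to precomposition with the $\pi_0 (C_{y'})$-action on $E_{y',\sigma,r}$.

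I expect the main obstacle to be this last point: proving rigorously that the indeterminacy of the construction is measured by $\pi_0 (C_{y'})$ and nothing larger. This needs the $C$-equivariant structure to be tracked throughout --- so that the identification $E_{y',\sigma,r} \cong H_*^{\mr{BM}}(\pr_{1,N}^{-1}(c(t)))$ is governed by the group $C$ rather than by $\pi_1$ of an abstract punctured disc --- together with the fact that $\dot{q\cE}_N$ descends from a $C$-equivariant local system on $\dot{\mf g}_N^{\sigma,r}$. By contrast the $\mh H (G,M,q\cE)$-linearity only uses the standard compatibilities of the specialization map recalled above and should be routine; and part (a) is essentially formal once \eqref{eq:1.11}--\eqref{eq:1.13} and Theorem~\ref{thm:1.1} are in hand.
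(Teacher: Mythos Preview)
Part (a) is correct and essentially the paper's argument: any nonzero homomorphism would force some $M_{y',\sigma,r,\rho'}$ to occur as a constituent of $E_{y,\sigma,r}$, which \eqref{eq:1.11}--\eqref{eq:1.13} exclude when $\mc O_y \not\subset \overline{\mc O_{y'}}$. (The paper argues directly with $E_{y',\sigma,r}$ rather than first splitting into $\pi_0(C_{y'})$-isotypic pieces, but that is cosmetic.)

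For (b) the paper takes a different, Verdier-dual route. Instead of choosing a curve and invoking specialization in Borel--Moore homology, it first dualizes: constructing $J_{y',y}$ is equivalent to constructing
\[
DJ_{y',y} : H^*(\{y\}, i_y^* K_{N,\sigma,r}^\vee) \longrightarrow H^*(\{y'\}, i_{y'}^* K_{N,\sigma,r}^\vee),
\]
where $K_{N,\sigma,r}^\vee$ is built from $q\cE^\vee$. Now one uses only constructibility with respect to the orbit stratification: there is an open neighborhood $U_y$ of $y$ on which every section of $K_{N,\sigma,r}^\vee$ is determined by its stalk at $y$, so for any $y_1 \in U_y \cap \mc O_{y'}$ there is a canonical stalk-to-stalk map $i_y^* K^\vee \to i_{y_1}^* K^\vee$; composing with the action of some $g_1 \in C$ sending $y_1$ to $y'$ gives $DJ_{y',y}$. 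In this presentation the only choices are $(y_1,g_1)$, and two such choices visibly differ by an element of $C_{y'}$, whose action on $E_{y',\sigma,r}$ factors through $\pi_0(C_{y'})$.

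Your specialization construction is morally dual to this and should produce the same map, but your canonicity argument has a gap at exactly the point you flagged. You account for the ambiguity in identifying $\pr_{1,N}^{-1}(c(t))$ with $\pr_{1,N}^{-1}(y')$ via a path in $\mc O_{y'}$, but you do not explain why the Borel--Moore specialization along $c$ depends only on the endpoint $c(t)$ and not on the arc itself: two curves with $c(0)=c'(0)=y$ and $c(t)=c'(t')$ need not a priori induce the same specialization map, and tracking $C$-equivariance alone does not settle this. What forces independence of the arc is precisely the constructibility input the paper uses --- sections near $y$ are determined by the stalk at $y$ --- and once that is invoked, the detour through a one-parameter family is unnecessary. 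The paper's stalk description also feeds directly into Proposition~\ref{prop:1.6}, where one needs $DJ_{y',y}$ to be \emph{injective} on the summand $H^*\big(i_y^* \IC_C(\mf g_N^{\sigma,r},\ind_{C_{y'}}^C(\rho'^\vee))\big)$; that is immediate from the fact that an $\IC$ sheaf on $\overline{\mc O_{y'}}$ has no subsheaves supported on the boundary, whereas extracting it from the Borel--Moore picture would require an extra translation.
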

\begin{proof}
(a) As $r \neq 0$, by \cite[Theorems 3.20 and 4.6]{AMS2}, every irreducible quotient of 
$E_{y',\sigma,r}$ is isomorphic to $M_{y',\sigma,r,\rho'}$ for some enhancement $\rho'$. Let 
\[
\phi \in \Hom_{\mh H (G,M,q\cE)} (E_{y',\sigma,r}, E_{y,\sigma,r})
\]
and suppose that $\phi \neq 0$. Then $\ker (\phi)$ is a proper submodule,
so $E_{y',\sigma,r} / \ker (\phi)$ has at least one quotient of the form $M_{y',\sigma,r,\rho'}$.
Now $\phi$ induces an injection 
\[
E_{y',\sigma,r} / \ker (\phi) \to E_{y,\sigma,r}, 
\]
which in particular maps the quotient $M_{y',\sigma,r,\rho'}$ injectively to a subquotient of
$E_{y,\sigma,r}$. However, by \eqref{eq:1.13}, \eqref{eq:1.12} and the assumption, $E_{y,\sigma,r}$ 
does not have any subquotients isomorphic to $M_{y',\sigma,r}$. This contradiction shows that 
$\phi \neq 0$ is impossible.\\
(b) Let $K^\vee_{N,\sigma,r}$ be the version of $K_{N,\sigma,r}$ obtained from $q\cE^\vee$ 
instead of $q\cE$. We need to construct a homomorphism of 
$\End^*_{\mc D^b_C (\mf g_N^{\sigma,r})} (K_{N,\sigma,r})$-modules
\[
J_{y',y} : H^* (\{y'\}, i_{y'}^! K_{N,\sigma,r}) \to H^* (\{y\}, i_y^! K_{N,\sigma,r}) 
\]
Via Verdier duality, this is equivalent to the construction of a homomorphism of 
$\End^*_{\mc D^b_C (\mf g_N^{\sigma,r})} (K^\vee_{N,\sigma,r})$-modules
\begin{equation}\label{eq:1.14}
D J_{y',y} : H^* (\{y\}, i_y^* K^\vee_{N,\sigma,r}) \to H^* (\{y'\}, i_{y'}^* K^\vee_{N,\sigma,r}) .
\end{equation}
Recall that $K^\vee_{N,\sigma,r}$ is a bounded complex of $C$-equivariant constructible
sheaves with finite dimensional stalks on $\mf g_N^{\sigma,r}$. Since there are only finitely
many $C$-orbits in $\mf g_N^{\sigma,r}$ \cite[\S 5.4]{KaLu}, there exists a $C_y$-stable 
open neighborhood $U_y$ of $y$ in $\mf g_N^{\sigma,r}$ such that every section of 
$K^\vee_{N,\sigma,r}$ over $U_y$ is completely determined by its stalk at $y$. From 
$\mc O_y \subset \overline{\mc O_{y'}}$ we see that $U_y \cap \mc O_{y'}$ is nonempty. Pick 
$y_1 \in U_y \cap \mc O_{y'}$. Every element of the stalk $i_y^* K^\vee_{N,\sigma,r}$ comes 
from a unique section over $U_y$, so it determines an element 
of the stalk $i_{y_1}^* K^\vee_{N,\sigma,r}$. That yields $\End^*_{\mc D^b_C (\mf g_N^{\sigma,r})}
(K_{N,\sigma,r}^\vee)$-equivariant maps
\begin{align}
\label{eq:1.98} & i_y^* K^\vee_{N,\sigma,r} \to i_{U_y}^* K_{N,\sigma,r}^\vee \to 
i_{y_1}^* K^\vee_{N,\sigma,r} ,\\
\label{eq:1.15} &
H^* (\{y\}, i_y^* K^\vee_{N,\sigma,r}) \to H^* (\{y_1\}, i_{y_1}^* K^\vee_{N,\sigma,r}) .
\end{align}
Pick $g_1 \in C$ with $g_1 \cdot y_1 = y'$. The action of $g_1$ provides an isomorphism of 
$\End^*_{\mc D^b_C (\mf g_N^{\sigma,r})} (K^\vee_{N,\sigma,r})$-modules 
\begin{equation}\label{eq:1.16}
g_1 : H^* (\{y_1\}, i_{y_1}^* K^\vee_{N,\sigma,r}) \to H^* (\{y'\}, i_{y'}^* K^\vee_{N,\sigma,r}).
\end{equation}
The composition of \eqref{eq:1.15} and \eqref{eq:1.16} is the desired map \eqref{eq:1.14}.

It remains to analyse the dependence on the choices of $U_y, y_1$ and $g_1$. Consider different
$y_2 \in U_y$ and $g_2 \in C$ with $g_2 \cdot y_2 = y'$. Then
\begin{equation}\label{eq:1.17}
g_2^{-1} g_1 : H^* (\{y_1\}, i_{y_1}^* K^\vee_{N,\sigma,r}) \to 
H^* (\{y_2\}, i_{y_2}^* K^\vee_{N,\sigma,r}) 
\end{equation}
is an isomorphism, canonical up to multiplying $g_2$ on the right by elements of $C_{y_2}$. 
The isomorphism
\begin{equation}\label{eq:1.18}
g_2 : H^* (\{y_2\}, i_{y_2}^* K^\vee_{N,\sigma,r}) \to H^* (\{y'\}, i_{y'}^* K^\vee_{N,\sigma,r}) 
\end{equation}
is the canonical in the same sense. Equivalently, \eqref{eq:1.17} and \eqref{eq:1.18} are canonical
up to multiplying $g_2$ on the left by elements of $C_{y'}$. The constructibility of the involved
sheaves entails that this action of $C_{y'}$ factors through $\pi_0 (C_{y'})$. 

From this we deduce that the choice of $U_y$ was inessential. For any alternative $\tilde{U}_y$,
the intersection $U_y \cap \tilde{U}_y$ contains an open neighborhood of $y$ with the same property.
We can take $y_2$ in that smaller neighborhood, and by \eqref{eq:1.17} and \eqref{eq:1.18} that
is just as good as $y_1$. Altogether the only non-canonicity of \eqref{eq:1.14} comes from the
$\End^*_{\mc D^b_C (\mf g_N^{\sigma,r})} (K^\vee_{N,\sigma,r})$-linear action of $\pi_0 (C_{y'})$
on $H^* (\{y'\}, i_{y'}^* K^\vee_{N,\sigma,r})$.
\end{proof}

We note that at this point it is still possible that $E_{y,\sigma,r} = 0$ or $E_{y',\sigma,r} = 0$,
because it may be impossible to extend $(\sigma,y)$ or $(\sigma,y')$ to an enhanced L-parameter
for $\mh H (G,M,q\cE)$. Even when $E_{y,\sigma,r}$ and $E_{y',\sigma,r}$ have irreducible 
subquotients in common, the map $J_{y',y}$ from Proposition \ref{prop:1.5} could still be zero.
To improve on that, we bring in enhanced L-parameters $(y,\sigma,r,\rho)$
and $(y',\sigma,r,\rho')$. The next result shows that $\rho' = \mr{triv}$ behaves better than
other $\rho'$, so we focus on that.

Let $U_y \subset \mf g_N^{\sigma,r}$ be the $C_y$-stable neighborhood of $y$ from the proof
of Proposition \ref{prop:1.5}.b. We may assume that $y' \in U_y$. Let $(U_y \cap \mc O_{y'})_c$ 
be the connected component of $U_y \cap \mc O_{y'}$ containing $y'$ and let $S_{y,y'}$ be the
stabilizer of $(U_y \cap \mc O_{y'})_c$ in $C_y$. The connected group $C_y^\circ$ stabilizes
every connected component of $U_y \cap \mc O_{y'}$, so $C_y^\circ \subset S_{y,y'}$.

\begin{prop}\label{prop:1.16}
\enuma{
\item The $\pi_0 (C_y)$-representation $H^* \big( \{y\}, i_y^* \IC_C (\mf g_N^{\sigma,r}, 
\ind_{C_{y'}}^C (\mr{triv}) ) \big)$ can be embedded in a direct sum of copies of
$\ind_{S_{y,y'}}^{C_y} (\mr{triv})$.
\item Let $d$ be the degree in which $\IC_C (\mf g_N^{\sigma,r}, \ind_{C_{y'}}^C (\mr{triv}) )
|_{\mc O_{y'}}$ is nontrivial. Then
\[
H^d \big( \{y\}, i_y^*\, \IC_C (\mf g_N^{\sigma,r}, \ind_{C_{y'}}^C (\mr{triv}) ) \big) \cong
\ind_{S_{y,y'}}^{C_y} (\mr{triv}) \quad \text{as } \pi_0 (C_y)\text{-representations.}
\] 
}
\end{prop}
\begin{proof}
(a) We regard $\IC_C (\mf g_N^{\sigma,r}, \ind_{C_{y'}}^C (\mr{triv}) )$ as a (fixed)
complex of $C$-equivariant constructible sheaves. 
As arranged in the proof of Proposition \ref{prop:1.5}.b, $i_y^*$ gives a bijection
\begin{equation}\label{eq:2.80}
\{ \text{sections of } i_{U_y}^* \IC_C (\mf g_N^{\sigma,r}, \ind_{C_{y'}}^C (\mr{triv})) 
\text{ over } U_y \} \to
i_y^* \IC_C (\mf g_N^{\sigma,r}, \ind_{C_{y'}}^C (\mr{triv}))
\end{equation}
and every section over $U_y$ is determined by its values on $U_y \cap \mc O_{y'}$. 
Let $A$ be a connected component of $U_y \cap \mc O_{y'}$ and let $\bar A$ be its
closure in $U_y$. Pullback from $U_y$ to $\bar A$, for all possible $A$, provides a map
\begin{equation}\label{eq:2.81}
i_{U_y}^* \IC_C (\mf g_N^{\sigma,r}, \ind_{C_{y'}}^C (\mr{triv})) \to
\bigoplus\nolimits_A i^*_{\bar A} \IC_C (\mf g_N^{\sigma,r}, \ind_{C_{y'}}^C (\mr{triv})) ,
\end{equation}
which is injective on the level of sections. Since $\mc O_{y'}$ is one $C$-orbit, 
$C_y$ acts transitively on $\pi_0 (U_y \cap \mc O_{y'})$. Hence the right hand side of
\eqref{eq:2.81} is isomorphic, as $C_y$-representation, to 
\begin{equation}\label{eq:2.82}
\ind_{S_{y,y'}}^{C_y} \big( i^*_{\bar A} \IC_C (\mf g_N^{\sigma,r}, \ind_{C_{y'}}^C 
(\mr{triv})) \big) \qquad \text{with } A = (U_y \cap \mc O_{y'})_c .
\end{equation}
Consider $g \in S_{y,y'}$ and a section $s$ of 
$i^*_{\bar A} \IC_C (\mf g_N^{\sigma,r}, \ind_{C_{y'}}^C (\mr{triv}))$ over
$\bar A = \overline{(U_y \cap \mc O_{y'})_c}$. Then $g(y'') \in (U_y \cap \mc O_{y'})_c$ 
for all $y'' \in (U_y \cap \mc O_{y'})_c$ and, as the
intersection cohomology complex arises from a trivial $C$-equivariant local system,
\begin{equation}\label{eq:1.97}
s (g(y'')) \text{ equals } g (s(y'')) .
\end{equation}
Hence $S_{y,y'}$ acts trivially on such sections $s$.  It follows that the sections over
$\bar A$ in \eqref{eq:2.82} give rise to a $C_y$-representation isomorphic to a direct sum 
of copies of $\ind_{S_{y,y'}}^{C_y} (\mr{triv})$. By \eqref{eq:2.81} and \eqref{eq:2.80},
$i_y^* \IC_C (\mf g_N^{\sigma,r}, \ind_{C_{y'}}^C (\mr{triv}) )$
can be embedded in such a $C_y$-representation. Hence the same goes for its cohomology.\\
(b) We use some explicit information about intersection cohomology complexes from
trivial local systems, for which \cite{Rie} is a good reference. Let $d_y$ be the 
dimension of $\mc O_y$ over $\R$, and similarly for $d_{y'}$. Let $L$ be the link of $y$, 
another stratified manifold. By the axioms of a stratifications \cite[\S 2.1]{Rie}, 
$U_y$ can be written as
\begin{equation}\label{eq:1.96}
\R^{d_{y'} - d_y} \times \mr{cone}^\circ (L) ,
\end{equation}
where $\mr{cone}^\circ (L)$ denotes the open cone $L \times [0,\infty) / L \times \{0\}$.
Moreover, as $\mf g_N^{\sigma,r}$ is endowed with a smooth $C$-action, we may assume
that the diffeomorphism of $U_y$ with \eqref{eq:1.96} is $C_y$-equivariant. Now
$H^j \big( \{y\}, i_y^* \IC_C (\mf g_N^{\sigma,r}, \ind_{C_{y'}}^C (\mr{triv}) ) \big)$
is computed in terms of $L$ in \cite[\S 4.4]{Rie}. In degree $j = d$ we obtain
\begin{equation}\label{eq:1.95}
H_{d-1-d_{y'}+d_y} (L) \cong H_d \big( L \times \R^{d_{y'} - d_y + 1} \big) .
\end{equation}
Here we are still taking into account in which degree $\ind_{C_{y'}}^C (\mr{triv})$ is 
placed. That is responsible for a shift by $d$ degrees, so \eqref{eq:1.95} is actually
the $\pi_0 (C_y)$-representation $H_0 \big( L \times \R^{d_{y'} - d_y + 1} \big)$.
This depends only on $\pi_0 (L)$, which by \eqref{eq:1.96} is canonically in bijection 
with $\pi_0 (U_y \cap \mc O_{y'})$. Further $C_y$ permutes $\pi_0 (U_y \cap \mc O_{y'})$
transitively, with $S_{y,y'}$ as the stabilizer of one element. Hence the
$\pi_0 (C_y)$-representation \eqref{eq:1.95} can be expressed as
\[
H_0 (L) = \C [\pi_0 (U_y \cap \mc O_{y'})] \cong \ind_{S_{y,y'}}^{C_y} (\mr{triv}).
\qedhere
\]
\end{proof}

We note that Proposition \ref{prop:1.16} may fail with an arbitrary $\rho' \in \Irr (\pi_0 (C_y))$ 
instead of triv. Especially \eqref{eq:1.97} would cause problems, it could fail for 
$g \in C_y^0$, and then the constructibility of $i_y^* \, \IC_C \big( \mf g_N^{\sigma,r},
\ind_{C_{y'}}^C (\mr{triv})\big)$ would mean that $s$ cannot define a nonzero 
element there.

\begin{prop}\label{prop:1.6}
\enuma{
\item If the multiplicities \eqref{eq:1.20} and \eqref{eq:1.12} are zero and $r \neq 0$, then
\[
\Hom_{\mh H (G,M,q\cE)}(E_{y',\sigma,r,\rho'}, E_{y,\sigma,r,\rho}) = 0.
\]
\item Let $\rho'$ be the trivial representation of $\pi_0 (C_y)$. 
Suppose that the multiplicities $\mu (y,\rho,y',\rho')$ and \eqref{eq:1.20} are nonzero, 
so in particular $\mc O_y \subset \overline{\mc O_{y'}}$. Then $J_{y',y}$ induces a nonzero 
$\mh H (G,M,q\cE)$-module homomorphism from $E_{y',\sigma,r,\rho'}$ to the $\rho$-isotopic 
component of $E_{y,\sigma,r}$. In particular there exists a nonzero
$\mh H (G,M,q\cE)$-homomorphism from $E_{y',\sigma,r,\rho'}$ to $E_{y,\sigma,r,\rho}$.
}
\end{prop}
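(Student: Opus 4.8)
The plan is to leverage the canonical map $J_{y',y}$ from Proposition \ref{prop:1.5}(b) together with the explicit description of the $\pi_0(C_y)$-isotypic decomposition of $E_{y,\sigma,r}$ given by \eqref{eq:1.11}--\eqref{eq:1.12}. For part (a), I would argue exactly as in the proof of Proposition \ref{prop:1.5}(a): since $r \neq 0$, every irreducible quotient of $E_{y',\sigma,r,\rho'}$ is of the form $M_{y',\sigma,r,\rho''}$ for some enhancement $\rho''$ (by \cite[Theorems 3.20 and 4.6]{AMS2}), so a nonzero homomorphism $E_{y',\sigma,r,\rho'} \to E_{y,\sigma,r,\rho}$ would force some $M_{y',\sigma,r,\rho''}$ to appear as a subquotient of $E_{y,\sigma,r,\rho}$. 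But by \eqref{eq:1.13} the multiplicity of $M_{y',\sigma,r,\rho''}$ in $E_{y,\sigma,r,\rho}$ is governed by $\mu(y,\rho,y',\rho'')$, which by the Verdier-duality identification \eqref{eq:1.20}$=$\eqref{eq:1.12} is the same multiplicity appearing in \eqref{eq:1.20}; the hypothesis that these vanish then gives a contradiction. I should be slightly careful to run this over all enhancements $\rho''$, but the vanishing hypothesis is on the relevant geometric multiplicity so this is routine.

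For part (b), I would track what $J_{y',y}$ does on $\pi_0(C_{y'})$- and $\pi_0(C_y)$-isotypic components. Recall from the construction in Proposition \ref{prop:1.5}(b) that $J_{y',y}$ is canonical only up to the $\End^*$-linear action of $\pi_0(C_{y'})$ on the source; concretely, $J_{y',y}$ should be thought of as a $\pi_0(C_{y'})$-equivariant map after we account for how $\pi_0(C_{y'})$ acts near $y$ versus near $y'$. The key point is that, using \eqref{eq:1.11} for both $E_{y',\sigma,r}$ and $E_{y,\sigma,r}$, the map $J_{y',y}$ is nonzero precisely on those constituents $M_{y'',\sigma,r,\rho''}$ with $\mc O_{y'} \supseteq \mc O_{y''} \supseteq \mc O_y$ (closure containment in both directions), and in particular the component indexed by $(y',\rho')$ in the source maps, via the geometric multiplicity space, to the component indexed by $(y,\rho)$ in the target whenever $\mu(y,\rho,y',\rho') \neq 0$. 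Matching up the $\pi_0(C_{y'})$-action on the $V_{\rho'}$-multiplicity space on the source with the $\pi_0(C_y)$-action on the $V_\rho$-multiplicity space on the target — which is exactly where the non-canonicity of $J_{y',y}$ gets absorbed — yields a nonzero $\mh H(G,M,q\cE)$-module homomorphism $V_{\rho'} \otimes E_{y',\sigma,r,\rho'} \to V_\rho \otimes E_{y,\sigma,r,\rho}$, where $\mh H(G,M,q\cE)$ acts only on the second factors. The nonvanishing follows because the induced map on the relevant graded piece is the identity on the geometric multiplicity space $\Hom_{\pi_0(C_{y'})}(\rho', \cdot)$ composed with the (nonzero, by the $\mu \neq 0$ hypothesis) restriction-to-$\mc O_y$ map.

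For part (c), I would extract from part (b) a family of honest homomorphisms $E_{y',\sigma,r,\rho'} \to E_{y,\sigma,r,\rho}$ by choosing a basis. The map from part (b) lives in
\[
\Hom_{\mh H(G,M,q\cE)}(V_{\rho'} \otimes E_{y',\sigma,r,\rho'},\, V_\rho \otimes E_{y,\sigma,r,\rho})
\;\cong\; \Hom_{\C}(V_{\rho'},V_\rho) \otimes \Hom_{\mh H(G,M,q\cE)}(E_{y',\sigma,r,\rho'},\, E_{y,\sigma,r,\rho}),
\]
and the construction pins it down, up to the $\pi_0(C_{y'})$-non-canonicity, as an element whose "$\Hom_\C(V_{\rho'},V_\rho)$-rank" equals $\mu(y,\rho,y',\rho')$ — this rank is precisely the dimension of the multiplicity space in \eqref{eq:1.12}, equivalently of the space $\Hom_{\pi_0(C_y)}(\rho, H^*(i_y^! \IC_C(\mf g_N^{\sigma,r}, \ind_{C_{y'}}^C(\rho'))))$ sitting inside \eqref{eq:1.11}. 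Projecting to each of the $\mu(y,\rho,y',\rho')$ coordinates then produces $\mu(y,\rho,y',\rho')$ linearly independent elements of $\Hom_{\mh H(G,M,q\cE)}(E_{y',\sigma,r,\rho'}, E_{y,\sigma,r,\rho})$.

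The main obstacle I anticipate is part (b): precisely controlling the $\pi_0(C_{y'})$- versus $\pi_0(C_y)$-equivariance of $J_{y',y}$ under the isomorphisms \eqref{eq:1.15}--\eqref{eq:1.18}, and checking that the induced map on multiplicity spaces really is the geometric restriction map computing $\mu(y,\rho,y',\rho')$ — rather than something that could accidentally vanish. This requires carefully unwinding Lusztig's filtration of $E_{y,\sigma,r}$ by the cohomological grading on $K'_{N,\sigma,r}$ (\cite[\S 10.2]{Lus-Cusp2}) and identifying the associated-graded map induced by $J_{y',y}$ with the canonical restriction of intersection cohomology sheaves to the smaller orbit $\mc O_y$, whose rank is $\mu(y,\rho,y',\rho')$ by \eqref{eq:1.12}. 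Parts (a) and (c) are then formal consequences.
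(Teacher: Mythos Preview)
Your plan for part (a) is essentially the paper's argument: the same ``any nonzero map would force an appearance of $M_{y',\sigma,r,\rho'}$ as a subquotient'' contradiction. Note that since $r\neq 0$ the irreducible quotient of $E_{y',\sigma,r,\rho'}$ is unique and equals $M_{y',\sigma,r,\rho'}$, so there is no need to run over auxiliary enhancements $\rho''$.

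For part (b) there is a genuine gap. You correctly isolate the difficulty (showing that the induced map on the $(\rho',\rho)$-isotypic piece does not ``accidentally vanish'') but your proposed route --- tracking $J_{y',y}$ through the Grothendieck-group decomposition \eqref{eq:1.11} and then unwinding Lusztig's filtration --- does not supply the missing input. The equality \eqref{eq:1.11} is only in the Grothendieck group, so it does not by itself control what $J_{y',y}$ does to actual submodules; and computing the associated-graded map of the filtration still requires a geometric nonvanishing statement that you have not identified.

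The paper's proof instead works on the sheaf side, where there \emph{is} a genuine direct-sum decomposition (Theorem \ref{thm:1.3}): $K^\vee_{N,\sigma,r}$ splits as a direct sum of shifted simple perverse sheaves $\IC_C(\mf g_N^{\sigma,r},\ind_{C_{y'}}^C(\rho'^\vee))$. On each such summand the dual map $DJ_{y',y}$ restricts to the cospecialization map
\[
H^*\!\big(\{y\},\, i_y^* \IC_C(\mf g_N^{\sigma,r},\ind_{C_{y'}}^C(\rho'^\vee))\big)
\longrightarrow
H^*\!\big(\{y'\},\, i_{y'}^* \IC_C(\mf g_N^{\sigma,r},\ind_{C_{y'}}^C(\rho'^\vee))\big)\cong \rho'^\vee,
\]
and this map is \emph{injective} because an IC sheaf on $\overline{\mc O_{y'}}$ has no subsheaves supported on $\overline{\mc O_{y'}}\setminus\mc O_{y'}$ (see e.g.\ \cite[Lemma 3.3.3]{Ach}). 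Since by hypothesis the source contains a copy of $\rho^\vee$, that copy injects into $\rho'^\vee$; dualizing, $J_{y',y}$ sends $\rho'\subset E_{y',\sigma,r}$ nontrivially to a copy of $\rho$ in $E_{y,\sigma,r}$, and splitting into isotypic components gives the nonzero map $V_{\rho'}\otimes E_{y',\sigma,r,\rho'}\to V_\rho\otimes E_{y,\sigma,r,\rho}$. This IC-sheaf injectivity is the key idea you are missing.

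Once (b) is done this way, part (c) is immediate: the source above contains $\mu(y,\rho,y',\rho')$ independent copies of $\rho^\vee$, all of which inject, so dually one obtains $\mu(y,\rho,y',\rho')$ linearly independent surjections $\rho'\to\rho$ inside $E_{y,\sigma,r}$, hence that many linearly independent elements of $\Hom_{\mh H(G,M,q\cE)}(E_{y',\sigma,r,\rho'},E_{y,\sigma,r,\rho})$. Your tensor-rank extraction is correct in spirit, but it again presupposes knowing the rank of the map from (b), which is exactly what the injectivity argument provides.
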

\begin{proof}
(a) This can be shown in the same way as Proposition \ref{prop:1.5}.a.\\
(b) The map $D J_{y',y}$ from \eqref{eq:1.14} sends the linear subspace
\begin{equation}\label{eq:1.19}
H^* \big (\{y\}, i_y^* \IC_C (\mf g_N^{\sigma,r},\ind_{C_{y'}}^C (\rho'^\vee) ) \big) 
\otimes M_{y',\sigma,r,\rho'^\vee} \; \subset \; H^* (\{y\}, i_y^* K^\vee_{N,\sigma,r})
\end{equation}
to the linear subspace
\begin{equation}\label{eq:1.27}
H^* \big( \{y'\}, i_{y'}^* \IC_C (\mf g_N^{\sigma,r},\ind_{C_{y'}}^C (\rho'^\vee) ) \big) 
\otimes M_{y',\sigma,r,\rho'^\vee} \cong \rho'^\vee \otimes M_{y',\sigma,r,\rho'^\vee}.
\end{equation}
It is the identity on $M_{y',\sigma,r,\rho'^\vee}$.
In the proof of Proposition \ref{prop:1.5}.b we can vary $g_2$ and $y_2$ used to construct
$D J_{y,y'}$, on \eqref{eq:1.19} that makes no difference because $\rho'$ is trivial.
Therefore $D J_{y,y'}$ is canonical and $\pi_0 (C_{y'})$-equivariant. 

From Proposition \ref{prop:1.16}.b we know that \eqref{eq:1.19} contains
\begin{equation}\label{eq:1.94}
H^d \big( \{y\}, i_y^* \IC_C (\mf g_N^{\sigma,r}, \ind_{C_{y'}}^C (\mr{triv}) ) \big)
\cong \ind_{S_{y,y'}}^{C_y} (\mr{triv}) 
\end{equation}
tensored with $M_{y',\sigma,r,\rho'^\vee}$. The construction in \eqref{eq:1.15}--\eqref{eq:1.16}
shows that on \eqref{eq:1.94} $D J_{y,y'}$ becomes the projection
\begin{equation}\label{eq:1.93}
\ind_{S_{y,y'}}^{C_y} (\mr{triv}) \to \mr{res}^{S_{y,y'}}_{C_{y'}} (\mr{triv}) = (\rho')^\vee 
\end{equation}
with kernel $\sum_{g \in C_y \setminus S_{y,y'}} g \, \mr{triv}_{S_{y,y'}}$.
As $\mu (y,\rho,y',\rho')$ and \eqref{eq:1.20} are (by assumption) nonzero, Proposition 
\ref{prop:1.16}.a shows that \eqref{eq:1.94} contains a copy of $\rho^\vee$. The projection of 
this $\rho^\vee$ on at least one of the subspaces $g \, \mr{triv}_{S_{y,y'}}$ of 
$\ind_{S_{y,y'}}^{C_y} (\mr{triv})$ is nonzero. All these subspaces are equally good, because
we may replace $y'$ by any element in its $C_y$-orbit. Therefore \eqref{eq:1.93} and
$D J_{y,y'}$ are nonzero on this copy of $\rho^\vee$. 

Via Verdier duality we see that the map
\[
J_{y,y'} : \rho' \otimes M_{y',\sigma,r,\rho'} \to H^* \big( \{y\}, i_y^! \IC_C 
(\mf g_N^{\sigma,r}, \ind_{C_{y'}}^C (\mr{triv}) ) \big) \otimes M_{y',\sigma,r,\rho'}
\]
is the identity on $M_{y',\sigma,r,\rho'}$ and nonzero on $\rho'$.
More precisely, when we project from $E_{y,\sigma,r}$ on its $\rho$-isotypic component
$V_\rho \otimes E_{y,\sigma,r,\rho}$, the map $J_{y,y'}$ composed with that projection sends
$\rho'$ nontrivially to a copy of $\rho$. Thus $J_{y,y'}$ induces a nonzero 
$\mh H (G,M,q\cE)$-module homomorphism from $E_{y',\sigma,r,\rho'} \supset \rho' 
\otimes M_{y',\sigma,r,\rho'}$ to $V_\rho \otimes E_{y,\sigma,r,\rho}$. 

We can compose that with a suitable linear map $V_\rho \to \C$ to obtain 
a nonzero module map from $E_{y',\sigma,r,\rho'}$ to $E_{y,\sigma,r,\rho}$.
\end{proof}

\section{Open parameters for twisted graded Hecke algebras}
\label{sec:open}

We say that an L-parameter $(y,\sigma,r)$ or $(y,\sigma,r,\rho)$ for $\mh H (G,M,q\cE)$ is 
open if the $Z_{G \times \C^\times}(\sigma)$-orbit of $y$ is open in $\mf g_N^{\sigma,r}$. 
We may also use $Z_G (\sigma)$ instead of $C = Z_{G \times \C^\times}(\sigma) = 
Z_G (\sigma) \times \C^\times$, for they have the same nilpotent orbits. Since there is a unique 
open orbit in $\mf g_N^{\sigma,r}$ \eqref{eq:1.21}, we could equivalently require that the 
$C$-orbit of $y$ is dense in $\mf g_N^{\sigma,r}$. 

For $r = 0$ we can reformulate the above condition in easier terms:
\begin{equation}\label{eq:1.23}
(y,\sigma,0) \text{ is open if and only if $y$ is regular nilpotent in } Z_{\mf g}(\sigma) .
\end{equation}
Sometimes it is useful to relax the notion of an open L-parameter. 

\begin{defn}\label{def:3.1}
We say that $(y,\sigma,r)$ is open with respect to $(G,M,q\cE)$ if
\[
\mr{Ad}(Z_{G \times \C^\times}(\sigma,r)) y \text{ is open in }
\{ y' \in \mf g_N^{\sigma,r} : (y',\sigma,r) \text{ is relevant for } (G,M,q\cE) \} .
\]
If in this situation $\rho$ is $(G,M,q\cE)$-relevant, then we also call $(y,\sigma,r,\rho)$ 
open with respect to $(G,M,q\cE)$.
\end{defn}

We recall that the condition on $y'$ in Definition \ref{def:3.1} is equivalent to: 
there exists a $\rho' \in \Irr \big( \pi_0 (Z_G (\sigma,y'))\big)$ such that the cuspidal 
quasi-support of $(y',\sigma,\rho')$ is $G$-conjugate to $(M,\cC_v^M,q\cE)$. 

Some algebras $\mh H(G,M,q\cE)$ do not have relevant open L-parameters, but they always admit 
L-parameters that are relevant with respect to $(G,M,q\cE)$.
Further, an open L-parameter is open with respect to $(G,M,q\cE)$ if only if it is relevant
with respect to $(G,M,q\cE)$.

\begin{lem}\label{lem:1.7}
Let $(y,\sigma,r,\rho)$ be an enhanced L-parameter for $\mh H (G,M,q\cE)$ which is open
with respect to $(G,M,q\cE)$.
Then $E_{y,\sigma,r,\rho}$ is irreducible and equals $M_{y,\sigma,r\rho}$.
\end{lem}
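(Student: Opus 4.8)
The statement to prove is Lemma~\ref{lem:1.7}: for an open enhanced L-parameter $(y,\sigma,r,\rho)$, the standard module $E_{y,\sigma,r,\rho}$ is irreducible and coincides with its irreducible quotient $M_{y,\sigma,r,\rho}$. The plan is to read off the composition series of $E_{y,\sigma,r,\rho}$ from the multiplicity formula \eqref{eq:1.13}, where the multiplicities are the numbers $\mu(y,\rho,y',\rho')$ from \eqref{eq:1.12}, the multiplicity of $\ind_{C_y}^C(\rho)$ in $i_{\mc O_y}^* \IC_C(\mf g_N^{\sigma,r},\ind_{C_{y'}}^C(\rho'))$. First I would observe that $\mu(y,\rho,y',\rho')$ can only be nonzero when $\mc O_y \subset \overline{\mc O_{y'}}$. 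Since $(y,\sigma,r)$ is open, $\mc O_y$ is the unique open $C$-orbit in $\mf g_N^{\sigma,r}$ \eqref{eq:1.21}, so the only orbit $\mc O_{y'}$ whose closure contains $\mc O_y$ is $\mc O_y$ itself; hence the only parameters $(y',\sigma,r,\rho')$ that can contribute to \eqref{eq:1.13} have $\mc O_{y'}=\mc O_y$, i.e. $y'$ is $C$-conjugate to $y$.

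\textbf{Key steps.} With $y'=y$ (up to conjugacy), the relevant multiplicity is the multiplicity of $\ind_{C_y}^C(\rho)$ in $i_{\mc O_y}^*\IC_C(\mf g_N^{\sigma,r},\ind_{C_y}^C(\rho'))$. But $\IC_C(\mf g_N^{\sigma,r},\ind_{C_y}^C(\rho'))$ restricted to its open stratum $\mc O_y$ is, by the very normalization of the IC-extension, just the local system $\ind_{C_y}^C(\rho')$ itself (in the appropriate degree). Therefore $\mu(y,\rho,y,\rho')$ equals the multiplicity of $\ind_{C_y}^C(\rho)$ in $\ind_{C_y}^C(\rho')$, which by the equivalence \eqref{eq:1.9} is $\dim\Hom_{\pi_0(C_y)}(\rho,\rho')$, i.e. it is $1$ if $\rho\cong\rho'$ and $0$ otherwise. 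Feeding this back into \eqref{eq:1.13}, $E_{y,\sigma,r,\rho} \;\dot{=}\; M_{y,\sigma,r,\rho}$ with multiplicity one and no other constituents. Since a finite length module whose semisimplification is a single simple module must itself be that simple module, $E_{y,\sigma,r,\rho}$ is irreducible; and as $M_{y,\sigma,r,\rho}$ is by Theorem~\ref{thm:1.1} the (for $r\neq 0$ unique) irreducible quotient of $E_{y,\sigma,r,\rho}$, the two coincide. One should note that the argument via \eqref{eq:1.13} does not require $r\neq 0$: the equality in the Grothendieck group holds for all $r$, and a length-one module is simple regardless.

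\textbf{Main obstacle.} The only genuinely delicate point is the identification of $i_{\mc O_y}^*\IC_C(\mf g_N^{\sigma,r},\ind_{C_y}^C(\rho'))$ with $\ind_{C_y}^C(\rho')$ on the open orbit, together with the degree-shift bookkeeping implicit in the notation of \eqref{eq:1.12}; one must make sure that ``multiplicity'' there is counted in a way compatible with the statement that the IC-sheaf restricts to the defining local system on the open stratum. This is standard (cf. the support/costalk properties of IC-extensions used already in the proof of Proposition~\ref{prop:1.6} via \cite[Lemma~3.3.3]{Ach}), so it is more a matter of citing the right normalization than of real difficulty. Everything else is a direct application of \eqref{eq:1.13}, \eqref{eq:1.21}, and Theorem~\ref{thm:1.1}.
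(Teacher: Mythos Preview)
Your proposal is correct and follows essentially the same line as the paper's proof: both use the multiplicity formula \eqref{eq:1.13} together with \eqref{eq:1.12}, observe that openness of $\mc O_y$ forces $\mc O_{y'}=\mc O_y$, and then read off that the only constituent is $M_{y,\sigma,r,\rho}$ with multiplicity one. Your treatment is in fact slightly more explicit than the paper's about why the IC-sheaf restricted to the open orbit returns the original local system, and about the case $r=0$.
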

\begin{proof}
For connected $G$ and $(y,\sigma,r)$ open this is \cite[Corollary 10.9.c]{Lus-Cusp2}. We spell 
out that argument, to see that it applies in our generality. By \eqref{eq:1.13}
any constituent of $E_{y,\sigma,r,\rho}$ is of the form $M_{y',\sigma,r,\rho'}$, where 
$\mc O_y \subset \overline{\mc O_{y'}}$ and $y'$ is relevant with respect to $(G,M,q\cE)$. 
The ``open'' property of $y$ forces $\mc O_y = \mc O_{y'}$, so we may assume that $y' = y$. 
Then \eqref{eq:1.12} and \eqref{eq:1.13} show that $\ind_{C_y}^C (\rho)$ must appear in 
\[
i_{\mc O_y}^* \IC_C (\mf g_N^{\sigma,r},\ind_{C_{y'}}^C (\rho') ) = \ind_{C_y}^C (\rho')
\]
That forces $\ind_{C_y}^C (\rho) \cong \ind_{C_y}^C (\rho')$, which is equivalent to
$M_{y,\sigma,r,\rho'} \cong M_{y,\sigma,r,\rho}$. From \eqref{eq:1.13} and \eqref{eq:1.12} we see
that $M_{y,\sigma,r,\rho}$ appears with multiplicity one in $E_{y,\sigma,r,\rho}$. We conclude
that the two modules coincide. 
\end{proof}

\noindent
The main result of this section is a quick consequence of the insights collected so far.

\begin{thm}\label{thm:1.8}
Let $(y,\sigma,r,\rho)$ be an enhanced L-parameter for $\mh H (G,M,q\cE)$. Suppose that 
$(y',\sigma,r)$ is an open L-parameter for $\mh H (G,M,q\cE)$, such that 
$M_{y',\sigma,r,\mr{triv}}$ occurs as a subquotient of $E_{y,\sigma,r,\rho}$. 
Then $M_{y',\sigma,r,\mr{triv}}$ is isomorphic to a submodule of $E_{y,\sigma,r,\rho}$.
\end{thm}
\begin{proof}
From \eqref{eq:1.13} we see that the multiplicity \eqref{eq:1.12} is nonzero and that
$\mc O_y \subset \overline{\mc O_{y'}}$. By Proposition \ref{prop:1.6}.b there exists a nonzero 
$\mh H (G,M,q\cE)$-module homomorphism from $E_{y',\sigma,r,\mr{triv}}$ to $E_{y,\sigma,r,\rho}$.
By Lemma \ref{lem:1.7} $E_{y',\sigma,r,\mr{triv}} = M_{y',\sigma,r,\mr{triv}}$
is irreducible, so this homomorphism is injective.
\end{proof}

The parameters $(y,\sigma,r,\rho)$ can also be presented in another way. By \cite[\S 2.4]{KaLu}
we can find a homomorphism of algebraic groups 
\begin{equation}\label{eq:1.22}
\begin{aligned}
\gamma_y : & \; SL_2 (\C) \to G^\circ \text{ such that:}\\
& \bullet \; \textup{d}\gamma_y ( \matje{0}{1}{0}{0} ) = y,\\
& \bullet \; \sigma_0 := \sigma - \textup{d}\gamma_y ( \matje{r}{0}{0}{-r} ) 
\text{ commutes with the image of d}\gamma_y. \\
\end{aligned}
\end{equation} 
Moreover the $G^\circ$-conjugacy class of $(y,\sigma,r)$ determines the $G^\circ$-conjugacy
class of $(y,\sigma_0,r)$ and conversely. We recall from \cite[Lemma 3.6]{AMS2} that there is
a na\-tu\-ral isomorphism $\pi_0 (C_y) \cong \pi_0 (Z_G (\sigma_0,y))$, and that the data
$(y,\sigma,r,\rho)$ up to $G$-association carry exactly the same information as $(y,\sigma_0,r,\rho)$
up to $G$-association. 

\begin{lem}\label{lem:1.12}
Let $(y,\sigma,r)$ be an L-parameter for $\mh H (G,M,q\cE)$, with $r \neq 0$. Equivalent are:
\begin{itemize}
\item[(i)] $(y,\sigma,r)$ is open,
\item[(ii)] $\mf g_N^{\sigma,r}$ equals 
$\{ X \in Z_{\mf g}(\sigma_0) : [\sigma - \sigma_0,X] = 2rX \}$
\end{itemize}
\end{lem}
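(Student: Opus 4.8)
The statement to be proved is an equivalence characterizing openness of the L-parameter $(y,\sigma,r)$ (with $r \neq 0$) in terms of the decomposition $\sigma = \sigma_0 + \mathrm{d}\gamma_y(\mathrm{diag}(r,-r))$ coming from \eqref{eq:1.22}. My plan is to compute both sides explicitly using the $\mathfrak{sl}_2$-triple attached to $y$ and $\sigma_0$. First I would set up notation: since $\sigma_0$ commutes with the image of $\mathrm{d}\gamma_y$, it commutes with the semisimple element $h := \mathrm{d}\gamma_y(\mathrm{diag}(1,-1))$, so $\sigma - \sigma_0 = r h$ and the condition $[\sigma,X] = 2rX$ splits as a joint eigenvalue condition for the commuting semisimple operators $\mathrm{ad}(\sigma_0)$ and $\mathrm{ad}(h)$ acting on $\mathfrak{g}$. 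Concretely, $X \in \mathfrak{g}_N^{\sigma,r}$ iff $[\sigma_0,X] + r[h,X] = 2rX$. I would then decompose $\mathfrak{g} = \bigoplus_{a,b} \mathfrak{g}_{a,b}$ into joint eigenspaces ($\mathrm{ad}(\sigma_0)$-eigenvalue $a$, $\mathrm{ad}(h)$-eigenvalue $b$), so that $\mathfrak{g}_N^{\sigma,r} = \bigoplus_{a + rb = 2r} \mathfrak{g}_{a,b}$. The right-hand side $\{X \in Z_\mathfrak{g}(\sigma_0) : [\sigma-\sigma_0,X]=2rX\}$ is exactly $\bigoplus_{b} \mathfrak{g}_{0,b}$ with $rb = 2r$, i.e. $\mathfrak{g}_{0,2}$. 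So (ii) asserts that the only $(a,b)$ with $a+rb = 2r$ and $\mathfrak{g}_{a,b} \neq 0$ is $(0,2)$.

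\textbf{Key steps.} The heart of the argument is then a dimension count for the open orbit. By \eqref{eq:1.21} there is a unique open $C$-orbit in $\mathfrak{g}_N^{\sigma,r}$, and $(y,\sigma,r)$ is open iff $\mathrm{Ad}(Z_G(\sigma))y$ is that orbit, equivalently iff $\dim \mathrm{Ad}(Z_G(\sigma))y = \dim \mathfrak{g}_N^{\sigma,r}$. The tangent space to the orbit at $y$ is $[\mathfrak{z}_\mathfrak{g}(\sigma),y]$, so openness is equivalent to
\[
[\,Z_{\mathfrak g}(\sigma),\,y\,] = \mathfrak{g}_N^{\sigma,r}.
\]
Now I would use the standard $\mathfrak{sl}_2$-theory (as in Kazhdan--Lusztig \cite{KaLu}, \S 5): since $y \in \mathfrak{g}_{2,0}$ (it has $\mathrm{ad}(\sigma_0)$-eigenvalue $0$ because $\sigma_0$ commutes with $\gamma_y$, and $\mathrm{ad}(h)$-eigenvalue $2$), the map $\mathrm{ad}(y): \mathfrak{g}_{a,b} \to \mathfrak{g}_{a,b+2}$ is injective whenever $b \leq -1$ and surjective whenever $b \geq 1$, by $\mathfrak{sl}_2$-representation theory applied to each isotypic piece of $\mathfrak{g}$ as a $\gamma_y(SL_2)$-module (this is uniform in the $\mathrm{ad}(\sigma_0)$-grading because $\sigma_0$ is central for that $SL_2$). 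Applying this with $[\mathfrak{z}_\mathfrak{g}(\sigma),y] = \mathrm{ad}(y)(Z_\mathfrak{g}(\sigma))$ and $Z_\mathfrak{g}(\sigma) = \bigoplus_{a+rb=0} \mathfrak{g}_{a,b}$, one sees that $\mathrm{ad}(y)$ maps the piece $\mathfrak{g}_{a,b}$ of $Z_\mathfrak{g}(\sigma)$ (where $a + rb = 0$) into the piece $\mathfrak{g}_{a,b+2}$ of $\mathfrak{g}_N^{\sigma,r}$ (where indeed $a+r(b+2)=2r$), and this correspondence $(a,b) \mapsto (a,b+2)$ is a bijection between the indexing sets. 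Surjectivity of $\mathrm{ad}(y)$ on the relevant pieces holds automatically when $b+2 \geq 1$, i.e. $b \geq -1$; the only potential failure is at $\mathfrak{g}_{a,b}$ with $b \leq -2$. So $[\mathfrak z_\mathfrak g(\sigma),y] = \mathfrak g_N^{\sigma,r}$ iff $\mathfrak{g}_{a,b} = 0$ for all $(a,b)$ with $a+rb = 2r$ and $b \leq -2$; combined with the fact that $b \geq 1$ pieces in $\mathfrak g_N^{\sigma,r}$ are hit and $b=0$ forces the summand to be $\mathfrak g_{2r,0}$... — and here I must track that $r \neq 0$ rules out $b$-values other than those forced. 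Reconciling the eigenvalue bookkeeping to conclude that openness is equivalent to "$\mathfrak g_N^{\sigma,r} = \mathfrak g_{0,2}$" is the precise combinatorial content of (i) $\Leftrightarrow$ (ii).

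\textbf{Main obstacle.} The delicate point is the direction (i) $\Rightarrow$ (ii): from surjectivity of $\mathrm{ad}(y)$ one gets that no negative-$b$ joint eigenspaces contribute to $\mathfrak g_N^{\sigma,r}$, but one also needs that no \emph{positive}-$b$ eigenspaces other than $b=2$, and no $b=2$ eigenspace other than $\mathfrak g_{0,2}$, contribute — i.e. that the constraint $a + rb = 2r$ together with nonvanishing of $\mathfrak g_{a,b}$ forces $(a,b)=(0,2)$. This requires showing that the "source" $\mathfrak g_{a,b-2}$ inside $Z_\mathfrak g(\sigma)$ is nonzero whenever the "target" is, which is where one uses the full strength of the $\mathfrak{sl}_2$-module structure: an irreducible $\gamma_y(SL_2)$-submodule of $\mathfrak g$ whose top weight space lies in $\mathfrak g_N^{\sigma,r}$ with $b > 2$ would contribute a weight-$(b-2)$ vector to $Z_\mathfrak g(\sigma)$, but then $\mathrm{ad}(y)$ of that vector is a nonzero element of $\mathfrak g_N^{\sigma,r}$ \emph{not} in the image from the correct source — I would phrase this as: $\dim \mathfrak g_N^{\sigma,r} - \dim[\mathfrak z_\mathfrak g(\sigma),y]$ equals the dimension of the part of $\mathfrak g_N^{\sigma,r}$ with $b$-weight $\leq -2$ \emph{minus} an analogous correction, and openness forces this defect to vanish, which by the $\mathfrak{sl}_2$ weight symmetry forces $\mathfrak g_N^{\sigma,r}$ to sit entirely in $b$-weight $2$, hence (via $a + 2r = 2r$) in $\mathfrak g_{0,2} = \{X \in Z_\mathfrak g(\sigma_0): [\sigma-\sigma_0,X]=2rX\}$. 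The reverse implication (ii) $\Rightarrow$ (i) should then be the easy direction: if $\mathfrak g_N^{\sigma,r} = \mathfrak g_{0,2}$, then $y$ is a regular nilpotent element of the reductive Lie algebra $Z_\mathfrak g(\sigma_0)$ (it is the nilpositive part of a principal-type $\mathfrak{sl}_2$ inside the $\sigma_0$-centralizer restricted to the appropriate eigenspace), and regularity there translates, via \eqref{eq:1.23}-type reasoning after passing to the $r=0$ picture for $Z_\mathfrak g(\sigma_0)$, into density of the $Z_G(\sigma)$-orbit of $y$ in $\mathfrak g_N^{\sigma,r}$.
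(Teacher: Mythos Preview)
Your approach is genuinely different from the paper's. The paper does not do a bigraded $\mathfrak{sl}_2$-weight analysis at all. For (i) $\Rightarrow$ (ii) it works with a maximal torus $\mathfrak t' \ni \sigma$ and root spaces: after conjugating $y$ inside its $Z_{G^\circ}(\sigma)$-orbit so that it has a nonzero component in every $\mathfrak g_\alpha$ with $\alpha(\sigma)=2$, it lets $R'$ be the minimal parabolic root subsystem containing those $\alpha$, and argues that $y$ is a \emph{distinguished} nilpotent in the corresponding Levi $\mathfrak g'$. Distinguishedness then forces $\sigma_0 \in Z(\mathfrak g')$ (since $Z_{\mathfrak g'}(\sigma_0)$ is a Levi of $\mathfrak g'$ containing $y$), so $\alpha(\sigma_0)=0$ whenever $\alpha(\sigma)=2$, which is (ii). For (ii) $\Rightarrow$ (i) the paper simply cites the Kostant/Chriss--Ginzburg fact that for an $\mathfrak{sl}_2$-triple $(y,h,f)$ in a reductive algebra $\mathfrak h$, the $Z_H(h)$-orbit of $y$ is open in $\{X\in\mathfrak h:[h,X]=2X\}$, applied with $\mathfrak h=Z_{\mathfrak g}(\sigma_0)$ and $h=(\sigma-\sigma_0)/r$.

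Your (ii) $\Rightarrow$ (i) is essentially that same fact, and your surjectivity statement $\mathrm{ad}(y):\mathfrak g_{0,0}\twoheadrightarrow \mathfrak g_{0,2}$ is exactly what is needed. But the sentence ``$y$ is a regular nilpotent element of $Z_{\mathfrak g}(\sigma_0)$'' is wrong and should be dropped: nothing forces regularity there, and you do not need it.

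The real gap is in (i) $\Rightarrow$ (ii). Your criterion for openness, $[\mathfrak z_{\mathfrak g}(\sigma),y]=\mathfrak g_N^{\sigma,r}$, is correct, and $\mathfrak{sl}_2$-theory gives automatic surjectivity of $\mathrm{ad}(y):\mathfrak g_{a,b'-2}\to \mathfrak g_{a,b'}$ for every $b'\geq 1$. But that is precisely why your combinatorics \emph{cannot} force $b'=2$: a nonzero summand $\mathfrak g_{a,b'}\subset \mathfrak g_N^{\sigma,r}$ with $b'\geq 1$, $b'\neq 2$, $a=r(2-b')$ lies outside $\mathfrak g_{0,2}$ yet creates no obstruction to surjectivity. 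For a concrete test case, take $G=GL_4$, $r=1$, $\sigma=\mathrm{diag}(2,0,0,d)$ with $d\notin\{0,2\}$: then $\mathfrak g_N^{\sigma,1}=\mathrm{span}(E_{12},E_{13})$, the $Z_G(\sigma)$-orbit of $y=E_{12}$ is open (it is all nonzero vectors), and with the standard triple one gets $\sigma_0=\mathrm{diag}(1,1,0,d)$, so $E_{13}\in\mathfrak g_{1,1}$; your surjectivity test is satisfied (since $\mathrm{ad}(y):\mathfrak g_{1,-1}\to\mathfrak g_{1,1}$ is bijective) but $\mathfrak g_N^{\sigma,1}\neq\mathfrak g_{0,2}$. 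Your ``Main obstacle'' paragraph proposes a dimension-defect/weight-symmetry argument to exclude such pieces, but no such argument is available from the $\mathfrak{sl}_2$-grading alone. What the paper's route supplies instead is the structural input about distinguished nilpotents in a Levi --- an ingredient entirely absent from your sketch --- and that is where the content of (i) $\Rightarrow$ (ii) really lives.
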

\begin{proof}
Since $\mf g_N^{\sigma,r} = \mf g_N^{r^{-1} \sigma, 1}$ and $Z_{G^\circ}(\sigma) = Z_{G^\circ}
(r^{-1} \sigma)$, we may replace $(y,\sigma,r)$ by $(y,r^{-1} \sigma,1)$ and assume that $r = 1$.\\
(i) $\Rightarrow$ (ii)
Pick a maximal toral subalgebra $\mf t'$ of $\mf g$ containing $\sigma$, and let $R$ be the root
system of $(\mf g,\mf t')$. Then
\[
\mf g_N^{\sigma,1} = \bigoplus\nolimits_{\alpha \in R : \alpha (\sigma) = 2} \mf g_\alpha . 
\]
The open $Z_{G^\circ}(\sigma)$-orbit in $\mf g_N^{\sigma,1}$ contains an element with nonzero
parts in every root subspace $\mf g_\alpha$ with $\alpha (\sigma) = 2$. Conjugating $y$ by
an element of $Z_{G^\circ}(\sigma)$ if necessary, we may assume that $y$ has this property.
Let $R'$ be the minimal parabolic root subsystem of $R$ containing $\{ \alpha \in R : 
\alpha (\sigma) = 2\}$. It gives a Levi subalgebra
\[
\mf g' = \mf t \oplus \bigoplus\nolimits_{\alpha \in R'} \mf g_\alpha ,
\]
which contains $y$ as a distinguished nilpotent element. Let $G'$ be the algebraic Lie subgroup 
of $G^\circ$ with Lie algebra $\mf g'$, and let $\gamma_y : SL_2 (\C) \to G'$ be as in 
\eqref{eq:1.22}. As $Z_{\mf g'}(\sigma_0)$ is a Levi subalgebra of $\mf g'$ containing $y$,
\begin{equation}\label{eq:1.24}
\text{the distinguishedness of $y$ forces } \sigma_0 \in Z (\mf g') \subset \mf t' .
\end{equation} 
In particular any root $\alpha$ with $\alpha (\sigma) = 2$ satisfies $\alpha (\sigma_0) = 0$ and
$\alpha (\sigma - \sigma_0) = 2$. Hence (ii) holds for this specific $\sigma_0$. Since $\sigma_0$
is determined by $(y,\sigma,r)$ up to $G^\circ$-conjugacy, (ii) holds for all possible $\sigma_0$.\\
(ii) $\Rightarrow$ (i)
Notice that $y, \textup{d}\gamma_y (\matje{0}{0}{1}{0}), \textup{d}\gamma_y (\matje{1}{0}{0}{-1})$
is an $\mf{sl}_2$-triple in $Z_{\mf g}(\sigma_0 )$. It is known from \cite[Lemma 4.2.c]{Kos} or 
\cite[Lemma 3.7.24]{ChGi} that the orbit of $Z_{G^\circ}(\sigma_0) \cap Z_{G^\circ} 
\big( \textup{d}\gamma_y \matje{1}{0}{0}{-1}\big)$ through $y$ is open in 
\[
\{ X \in Z_{\mf g}(\sigma_0) : [ \textup{d}\gamma_y (\matje{1}{0}{0}{-1}),X] = 2X \} =
\{ X \in Z_{\mf g}(\sigma_0) : [\sigma - \sigma_0,X] = 2X \} = \mf g_N^{\sigma,1}.
\]
The group $Z_{G^\circ}(\sigma)$ contains $Z_{G^\circ}(\sigma_0) \cap Z_{G^\circ} 
\big( \textup{d}\gamma_y \matje{1}{0}{0}{-1}\big)$, so its orbit through $y$ is also open in 
$\mf g_N^{\sigma,1}$. 
\end{proof}

By \cite[Proposition 1.4]{AMS3} every enhanced L-parameter for $\mh H (G,M,q\cE)$ is
$G$-associated to one with $\sigma_0 \in \mf t = Z(\mf m), \sigma \in \mf m$ and 
$\mr{Sc}(y,\sigma_0,\rho)_{Z_G (\sigma_0)}$ represented by\\ $(M,v,\sigma_0, (q\cE)_v)$. 
We can reinterpret Definition \ref{def:3.1} in terms of $\sigma_0$:

\begin{lem}\label{lem:3.A}
A L-parameter $(y,\sigma,r)$ is open with respect to $(G,M,q\cE)$ if and only if
$\mr{Ad}(Z_G (\sigma_0)^\circ) y$ contains a dense subset of $\cC_v^M \oplus Z_{\mf u}(\sigma_0)$.
\end{lem}
\begin{proof}
Let $\epsilon$ be an irreducible constituent of $(q\cE)_v$ as 
$\pi_0 (Z_{M^\circ}(v))$-representation. The construction of quasi-cuspidal supports in
\cite[\S 5]{AMS1} shows that $(y',\sigma,r,\rho')$ is relevant for $\mh H (G,M,q\cE)$ if and only
if $(y',\sigma,r,\rho'^\circ)$ is relevant for $\mh H (G^\circ, M^\circ, \epsilon)$ for some
irreducible $\pi_0 (Z_{G^\circ}(\sigma_0,y'))$-subrepresentation $\rho'^\circ$ of $\rho'$.

Therefore it suffices to prove the lemma when $G$ is connected. Then \cite[Proposition 1.4]{AMS3},
which is based on \cite[\S 8]{Lus-Cusp1}, shows that
\begin{equation}\label{eq:3.B}
\mr{Ad}(Z_G (\sigma_0))(v + Z_{\mf u}(\sigma_0)) = 
\mr{Ad}(Z_G (\sigma_0)) (\cC_v^M + Z_{\mf u}(\sigma_0))
\end{equation}
is precisely the set of $y$ such that $(y,\sigma,r)$ (and equivalently $(y,\sigma_0,r))$ is
$(G,M,q\cE)$-relevant. Hence $(y,\sigma,r)$ is open with respect to $(G,M,q\cE)$ if and only if
Ad$(Z_G (\sigma_0)) y$ is dense in \eqref{eq:3.B}. That is equivalent to: Ad$(Z_G (\sigma_0)) y$
contains a dense subset of $\cC_v^M \oplus Z_{\mf u}(\sigma_0)$.
\end{proof}

An L-parameter $(y,\sigma,r)$ or $(y,\sigma_0,r)$ (or with $\rho$ included) is called bounded if
a $G$-conjugate of $\sigma_0$ lies in $i\R \otimes_\Z X_* (T)$. We recall that by 
\cite[Lemma 2.2]{SolKL} Ad$(G)\sigma_0$ intersects $\mf t = \C \otimes_\Z X_* (T)$ whenever
there exists an enhanced L-parameter for $\mh H (G,M,q\cE)$ with this $\sigma$. To explain the
terminology, we note that $\exp (i\R \otimes_\Z X_* (T))$ is the maximal compact subgroup of $T$.
Thus a parameter is bounded if and only if $\exp (\sigma_0)$ lies in a bounded closed subgroup 
of $G$. More generally we say that $(y,\sigma,r)$ is essentially bounded if a $G$-conjugate of 
$\sigma_0$ lies in $Z(\mf g) \oplus ( i\R \otimes_\Z X_* (T) )$.

The next result is a variation on a property of Langlands parameters for reductive groups,
announced in \cite[\S 0.6]{CFZ}.

\begin{lem}\label{lem:1.9}
Let $(y,\sigma,r)$ be an L-parameter for $\mh H (G,M,q\cE)$.
\enuma{
\item If $y$ is distinguished in $\mf g$, then $(y,\sigma,r)$ is essentially bounded.
\item If $\Re (r) \neq 0$ and $(y,\sigma,r)$ is essentially bounded, 
then it is an open parameter.
}
\end{lem}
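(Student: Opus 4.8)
\textbf{Proof plan for Lemma \ref{lem:1.9}.}

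For part (a), the plan is to unwind the definitions. By \eqref{eq:1.22} we may attach to $(y,\sigma,r)$ an $\mf{sl}_2$-triple via $\gamma_y$ and write $\sigma = \sigma_0 + \textup{d}\gamma_y(\matje{r}{0}{0}{-r})$ with $\sigma_0$ commuting with the image of $\textup{d}\gamma_y$. The point is that $Z_{\mf g}(\sigma_0)$ is then a reductive subalgebra containing the $\mf{sl}_2$-triple, and if $y$ is distinguished in $\mf g$ it remains distinguished in $Z_{\mf g}(\sigma_0)$ --- if it were contained in a proper Levi of $Z_{\mf g}(\sigma_0)$, the centralizer of $\sigma_0$ in that Levi would give a proper Levi of $\mf g$ containing $y$. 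By the same reasoning as in \eqref{eq:1.24} (distinguishedness forces $\sigma_0$ into the centre of the relevant Levi), $\sigma_0 \in Z(\mf g)$, which is trivially contained in $Z(\mf g) \oplus i\R \otimes_\Z X_*(T)$; hence $(y,\sigma,r)$ is essentially bounded. I would need to be slightly careful that ``distinguished in $\mf g$'' for possibly disconnected $G$ should be interpreted inside $\mf g = \mr{Lie}(G^\circ)$, but that is the only subtlety.

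For part (b), I would reduce to part of the argument of Lemma \ref{lem:1.12}. Since $\Re(r) \neq 0$ and rescaling $\sigma \mapsto r^{-1}\sigma$ (which changes neither $Z_{G^\circ}(\sigma)$ nor $\mf g_N^{\sigma,r}$ up to the obvious identification) we may assume $r = 1$, and then $\sigma - \sigma_0 = \textup{d}\gamma_y(\matje{1}{0}{0}{-1})$. By Lemma \ref{lem:1.12} (ii) $\Rightarrow$ (i), it suffices to check that
\[
\mf g_N^{\sigma,1} = \{ X \in Z_{\mf g}(\sigma_0) : [\sigma - \sigma_0, X] = 2X \} .
\]
Write $\sigma = \sigma_0 + (\sigma - \sigma_0)$ as commuting semisimple elements; then $\mf g_N^{\sigma,1}$ decomposes according to the joint eigenvalues, and the claim is exactly that every $X$ with $[\sigma,X]=2X$ already satisfies $[\sigma_0,X]=0$. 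This is where essential boundedness enters: conjugating so that $\sigma_0 \in Z(\mf g) \oplus i\R\otimes_\Z X_*(T)$, the eigenvalue of $\sigma - \sigma_0 = \textup{d}\gamma_y(\matje{1}{0}{0}{-1})$ on $X$ is a \emph{real} number (it is an integer, being an $\mf{sl}_2$-weight), while the eigenvalue of $\sigma_0$ on $X$ lies in $Z(\mf g)$-part plus $i\R$; since the total eigenvalue $[\sigma,X]=2X$ is real, the $i\R$-component of the $\sigma_0$-eigenvalue must vanish, and the $Z(\mf g)$-component acts by $0$ on the nilpotent $X$ (it is central, hence $[\cdot,X]=0$). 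Therefore $[\sigma_0,X]=0$ and $[\sigma-\sigma_0,X]=2X$, giving the reverse inclusion (the forward inclusion being trivial from the fact that $\mf g_N^{\sigma,1}$ consists of elements killed by $\mr{ad}(\sigma)-2$).

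I expect the main obstacle to be bookkeeping rather than conceptual: making sure the definition of ``essentially bounded'' via a $G$-conjugate of $\sigma_0$ is compatible with the chosen $\mf{sl}_2$-triple (so that one may genuinely assume $\sigma_0 \in Z(\mf g) \oplus i\R\otimes_\Z X_*(T)$ \emph{while} $\sigma \in \mf t \oplus \C(\sigma_v,1)$), and the argument that a real $\mr{ad}(\sigma)$-eigenvalue forces the $i\R$-part of the $\mr{ad}(\sigma_0)$-eigenvalue to vanish. One clean way to package the latter is to observe that $\mr{ad}(\sigma_0)$ acts on the real form spanned by root vectors with purely imaginary eigenvalues after the conjugation, whereas $\mr{ad}(\sigma)=\mr{ad}(\sigma_0)+\mr{ad}(\sigma-\sigma_0)$ and $\mr{ad}(\sigma-\sigma_0)$ has real (integer) eigenvalues, so the eigenspace decompositions are transverse and $\ker(\mr{ad}(\sigma)-2) \subseteq \ker(\mr{ad}(\sigma_0))$ automatically. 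Part (a) is short; part (b) is the substance, but both are routine once the eigenvalue argument is set up correctly.
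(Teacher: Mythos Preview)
Your approach for both parts is essentially the same as the paper's: for (a) you invoke \eqref{eq:1.24} to get $\sigma_0 \in Z(\mf g)$, and for (b) you verify criterion (ii) of Lemma \ref{lem:1.12} by separating the ad-eigenvalues of $\sigma_0$ and $\sigma-\sigma_0$. That is exactly what the paper does.

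There is, however, a genuine bookkeeping error in your part (b). You rescale to $r=1$, but the hypothesis only says $\Re(r)\neq 0$, so $r$ may be genuinely complex. The rescaling $(y,\sigma,r)\mapsto (y,r^{-1}\sigma,1)$ preserves openness, but it does \emph{not} preserve essential boundedness: the new $\sigma_0$ is $r^{-1}$ times the old one, so it lands in $Z(\mf g)\oplus r^{-1}i\R\otimes_\Z X_*(T)$, not in $Z(\mf g)\oplus i\R\otimes_\Z X_*(T)$. Your subsequent claim that ``the eigenvalue of $\sigma_0$ on $X$ lies in $\ldots i\R$'' is therefore unjustified after rescaling, and the argument ``since the total eigenvalue is real, the $i\R$-component must vanish'' breaks down.

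The fix is simply to drop the rescaling. Work directly with general $r$: the ad-eigenvalues of $\sigma_0$ on root spaces lie in $i\R$ (by essential boundedness, after conjugating $\sigma_0$ into $\mf t$), while those of $\sigma-\sigma_0 = \textup{d}\gamma_y(\matje{r}{0}{0}{-r})$ lie in $r\Z\subset r\R$. Since $\Re(r)\neq 0$ gives $r\R\cap i\R = \{0\}$, any root $\alpha$ with $\alpha(\sigma)=2r$ must satisfy $\alpha(\sigma_0)=0$ and $\alpha(\sigma-\sigma_0)=2r$, which is exactly condition (ii) of Lemma \ref{lem:1.12}. This is precisely how the paper argues.
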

\begin{proof}
(a) By the same reasons as for \eqref{eq:1.24}, $\sigma_0$ lies in $Z(\mf g)$.\\
(b) By \cite[Proposition 1.4]{AMS3} we may assume that $\sigma_0 ,\sigma - r\sigma_v \in \mf t$. 
Let $T'$ be a maximal torus of $Z_G (\sigma_0)$ whose Lie algebra $\mf t'$ contains
$\mf t + \C \sigma = \mf t + \C \sigma_v$. Then 
\[
\sigma = \sigma_0 + \textup{d}\gamma_y ( \matje{r}{0}{0}{-r} ) 
\text{ with } \sigma_0 \in Z(\mf g) \oplus i\R \otimes_\Z X_* (T) \text{ and } 
\textup{d}\gamma_y ( \matje{r}{0}{0}{-r} ) \in \R \otimes_\Z X_* (T') .
\]
In particular $\sigma_0$ takes imaginary values on all roots of $(\mf g,\mf t')$, while 
$\sigma - \sigma_0 = \textup{d}\gamma_y ( \matje{r}{0}{0}{-r} )$ takes values in $r \R$ on all roots.
As $r \R \cap i \R = \{0\}$, (ii) from Lemma \ref{lem:1.12} holds, and we conclude by applying 
Lemma \ref{lem:1.12}.
\end{proof}

We warn that Lemmas \ref{lem:1.12} and \ref{lem:1.9}.b are false for $r=0$. Just take $\sigma = 0$
and note that $(y,0,0)$ is a bounded parameter satisfying (ii), for any nilpotent element $y$.

Bounded parameters are related to $\mh H (G,M,q\cE)$-modules which are tempered in the sense of
\cite[Definition 3.24]{AMS2}. The definition says that $\mc O (\mf t)$-weights of a module must lie
in a certain negative cone. In particular any subquotient of a tempered module is again tempered.

Besides tempered representations, in an important role in harmonic analysis is played by
(essentially) discrete series representations. For graded Hecke algebras they are also defined
in \cite[Definition 3.24]{AMS2}, it says that their weights must belong to the interior of a
suitable negative cone. 

To see the connection between tempered representations and bounded parameters best, we involve the 
sign automorphism of $\mh H (G,M,q\cE)$. Let $\det : W_{q\cE} \to \{\pm 1\}$ be the determinant of 
the action of $W_{q\cE}$ on $X^* (T)$, an extension of the sign character of the Weyl group 
$W^\circ_{q\cE}$. Then $\sgn : \mh H (G,M,q\cE) \to \mh H (G,M,q\cE)$ is defined by
\[
\sgn (N_w) = \det (w) N_w ,\; \sgn (\mb r) = - \mb r ,\; \sgn (\xi) = \xi 
\qquad w \in W_{q\cE}, \xi \in \mc O (\mf t \oplus \C) .
\]

\begin{thm}\label{thm:1.10} 
\textup{\cite[\S 3.5 and \S 4]{AMS2} and \cite[Theorem 3.4]{SolKL}} 
\enuma{
\item Let $(y,\sigma,r,\rho)$ be an enhanced L-parameter for $\mh H (G,M,q\cE)$.
The following are equivalent when $\Re (r) \leq 0$:
\begin{itemize}
\item the parameter $(\sigma,y,r)$ is bounded,
\item $E_{y,\sigma,r,\rho}$ is tempered,
\item $M_{y,\sigma,r,\rho}$ is tempered.
\end{itemize}
When $\Re (r) < 0$, $M_{y,\sigma,r,\rho}$ is essentially discrete series if and only if
$y$ is distinguished nilpotent in $\mf g$.
\item Let $(y,\sigma,-r,\rho)$ be an enhanced L-parameter for $\mh H (G,M,q\cE)$.
The following are equivalent when $\Re (r) \geq 0$:
\begin{itemize}
\item the parameter $(\sigma,y,-r)$ is bounded,
\item $\sgn^* E_{y,\sigma,-r,\rho}$ is tempered,
\item $\sgn^* M_{y,\sigma,-r,\rho}$ is tempered.
\end{itemize}
When $\Re (r) > 0$, $\sgn^* M_{y,\sigma,-r,\rho}$ is essentially discrete series if and only if
$y$ is distinguished nilpotent in $\mf g$.
\item When $\Re (r) = 0$, $\mh H (G,M,q\cE)$ does not have nonzero essentially discrete modules 
on which $\mb r$ acts as $r$.
}
\end{thm}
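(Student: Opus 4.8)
The plan is to establish Theorem \ref{thm:1.10}.c by reducing to the case $r = 0$ and then combining parts (a) and (b) with the rescaling trick already used throughout Section \ref{sec:open}. First I would note that a module on which $\mb r$ acts as $r$ with $\Re(r) = 0$ is really a module for the quotient algebra $\mh H(G,M,q\cE)/(\mb r - r)$, so by Theorem \ref{thm:1.1} every nonzero such module has, up to the finite-dimensionality issues, a composition series whose factors are of the form $M_{y,\sigma,r,\rho}$ (for $r \neq 0$ uniqueness of the irreducible quotient makes this clean; the $r = 0$ case is handled separately below). Since essentially discrete series modules are defined by the condition that their $\mc O(\mf t)$-weights lie in the open negative cone, and this is a property of every constituent in an appropriate sense, it suffices to show that no $M_{y,\sigma,r,\rho}$ with $\Re(r) = 0$ is essentially discrete series.

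Next I would treat $r \neq 0$ purely imaginary. Here I would invoke the rescaling $\mf g_N^{\sigma,r} = \mf g_N^{r^{-1}\sigma,1}$ together with Theorem \ref{thm:1.10}.a and .b: when $\Re(r) < 0$ a module $M_{y,\sigma,r,\rho}$ is essentially discrete series exactly when $y$ is distinguished in $\mf g$, and symmetrically for $\sgn^* M_{y,\sigma,-r,\rho}$ when $\Re(r) > 0$. For $\Re(r) = 0$ neither half applies; instead I would use the weight characterization directly. The point is that for purely imaginary $r$ the central character $(\sigma,r)$ has $\sigma$ lying in $\mf t + r\sigma_v$ with $r\sigma_v$ imaginary, so the real parts of all $\mc O(\mf t)$-weights of any module with this central character are constrained to a proper subspace — they cannot reach the interior of the negative cone spanned by the (nonzero, real) coroots. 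Concretely, the tempered cone is cut out by inequalities $\langle \text{weight}, \varpi_\alpha^\vee \rangle \leq 0$ and the discrete series cone by strict inequalities; when $r$ is imaginary the real part of every weight of $M_{y,\sigma,r,\rho}$ lies in (a $W$-translate of) $\Re(\sigma) \in \mf t_{\R}$, which sits on the wall $\langle \cdot, \text{(the relevant coweight)} \rangle = 0$ whenever the span of the coroots is all of $\mf t$, forcing the strict inequalities to fail. This is essentially the observation already flagged in the warning after Lemma \ref{lem:1.9}, that for $r = 0$ bounded parameters need not be open, run in the contrapositive direction.

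Finally I would dispose of $r = 0$. In that case $\mf g_N^{\sigma,0}$ is the nilpotent cone of $Z_{\mf g}(\sigma)$, the algebra $\mh H(G,M,q\cE)/\mb r$ is (a crossed product of) the group algebra $\C[W_{q\cE}]$-type object with polynomial part, and its finite-dimensional modules are the ``graded Springer representations''; the weight cone degenerates and the interior of the negative cone becomes empty unless $\mf t = 0$, in which case distinguishedness would be needed, but one checks via Theorem \ref{thm:1.10}.a applied in a limit (or directly from \cite[Definition 3.24]{AMS2}) that the open-cone condition cannot be met at $r = 0$. The cleanest route is probably: an essentially discrete series module forces the $\mc O(\mf t)$-weights into the open cone, hence into a set whose closure meets $\mf t$ only at points with strictly negative pairings against all fundamental coweights; but at $r = 0$ the central character pins the real part of every weight to $W_{q\cE}\Re(\sigma)$, a single $W_{q\cE}$-orbit, which cannot lie in the open cone since the cone is $W_{q\cE}$-anisotropic. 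I expect the main obstacle to be bookkeeping the precise shape of the tempered/discrete cones from \cite[Definition 3.24]{AMS2} and verifying that the real-part constraint on weights genuinely excludes the open cone in all cases, including when $G$ is disconnected and $W_{q\cE}$ has a nontrivial component group $\Gamma_{q\cE}$; the geometric input (finiteness of orbits, the structure of $K_{N,\sigma,r}$) is already in hand from Theorems \ref{thm:1.2} and \ref{thm:1.3}, so the work is really in the harmonic-analytic translation.
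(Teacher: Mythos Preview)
The paper does not prove Theorem~\ref{thm:1.10} at all; it is quoted wholesale from \cite[\S 3.5 and \S 4]{AMS2} and \cite[Theorem 3.4]{SolKL}, so there is no ``paper's own proof'' to compare against. Your proposal addresses only part (c), and treats parts (a) and (b) as given input, which is reasonable given the citation.

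Your argument for $r=0$ is essentially correct in spirit but needs one clean fact you gloss over: at $r=0$ the cross relations collapse and $\mh H(G,M,q\cE)/(\mb r)$ is the twisted skew group algebra $\mc O(\mf t)\rtimes\C[W_{q\cE},\natural_{q\cE}]$, so every irreducible module with central character $W_{q\cE}\sigma_0$ has \emph{exactly} the full orbit $W_{q\cE}\sigma_0$ as its set of $\mc O(\mf t)$-weights. Since this orbit meets the closed dominant chamber, some weight has $\langle\Re(\cdot),\varpi_\alpha^\vee\rangle\geq 0$, contradicting the strict negativity required for essentially discrete series (provided $R_\omega\neq\emptyset$; the degenerate case is handled by convention). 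Your phrase ``the cone is $W_{q\cE}$-anisotropic'' encodes this but should be made explicit.

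Your argument for $r\neq 0$ purely imaginary has a genuine gap. You claim that because $r\sigma_v$ is imaginary, ``the real parts of all $\mc O(\mf t)$-weights\ldots are constrained to a proper subspace''. This is not so: the central character is $W_{q\cE}(\sigma-r\sigma_v)$ with $\sigma-r\sigma_v\in\mf t$, and nothing about $r$ being imaginary constrains $\Re(\sigma-r\sigma_v)$ to any subspace of $\mf t_\R$ --- it can be an arbitrary real point. The weights of the irreducible $M_{y,\sigma,r,\rho}$ form only a \emph{subset} of this $W_{q\cE}$-orbit, and for $r\neq 0$ that subset need not be $W_{q\cE}$-stable, so the $r=0$ argument does not carry over. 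The actual proof in \cite{AMS2} goes through the explicit weight computation for the standard modules (ultimately from \cite{Lus-Cusp2}) and the characterization of distinguished nilpotents; an alternative route is to observe that $\sgn^*$ fixes the $\mc O(\mf t)$-weights while sending $\mb r\mapsto -\mb r$, so for $\Re(r)=0$ both halves (a) and (b) of the temperedness statement apply simultaneously, and then one argues via the Langlands classification that no tempered irreducible can be essentially discrete series unless the strict inequality in (a) or (b) is available. Your proposal does not supply either of these mechanisms.
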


We will refer to $\sgn^* E_{y,\sigma,-r,\rho}$ as an analytic standard module. These are useful
because often $\mb r$ has to be specialized to a positive real number $r$, and then Theorem 
\ref{thm:1.10}.b yields tempered or essentially discrete $\mh H (G,M,q\cE)$-modules on which 
$\mb r$ acts as $r$. To emphasize the contrast, we will sometime call $E_{y,\sigma,r,\rho}$
a geometric standard module.

There are yet other standard modules, namely those appearing in the Langlands classification for
graded Hecke algebras \cite{Eve}. To construct those one starts with an irreducible tempered 
representation $\tau$ of a parabolic subalgebra $\mh H^P$ of $\mh H$ (determined by a set of
simple roots $P$). One twists $\tau$ by a character $t$ in positive position with respect to the 
set of simple roots outside $P$, and then induces to $\mh H$.

This works when $\Gamma_{q\cE}$ is trivial. When we include a nontrivial $\Gamma_{q\cE}$ or
$\C [\Gamma_{q\cE},\natural_{q\cE}]$ we have to be careful because it might mess up the uniqueness
of irreducible quotients in the Langlands classification. A solution is provided by
\cite[Corollary 2.2.5]{SolAHA} and \cite[(8.11)]{SolEnd}: one must choose the subgroup of 
$\Gamma_{q\cE}$ that occurs in a parabolic subalgebra of 
\[
\mh H (G,M,q\cE) = \mh H (\mf t,W_{q\cE}, k, \mb r, \natural_{q\cE})
\]
depending on the data $\tau,t$. Namely, one takes the largest subgroup $\Gamma_{P,t}$ of
$\Gamma_{q\cE}$ that stabilizes $P$ and $t$. Next one replaces $\tau$ by an irreducible 
representation $\tau'$ of $\mh H (\mf t,W_P \Gamma_{P,t}, k,\mb r,\natural_{q\cE})$ whose 
restriction to $\mh H^P$ contains $\tau$, or equivalently an irreducible direct summand of
$\ind_{\mh H (\mf t,W_P,k,\mb r)}^{\mh H (\mf t,W_P \Gamma_{P,t},k,\mb r,\natural_{q\cE})} \tau$.
We call modules of the form 
\begin{equation}\label{eq:1.25}
\ind_{\mh H (\mf t,W_P \Gamma_{P,t},k,\mb r,\natural_{q\cE})}^{
\mh H (\mf t,W_{q\cE}, k, \mb r, \natural_{q\cE})} (\tau' \otimes t)
\end{equation}
Langlands standard modules. With Clifford theory, in the version of \cite[\S 11]{SolGHA} and 
\cite[\S 1]{AMS3}, we can write
\[
\tau' = \ind^{\mh H (\mf t,W_P \Gamma_{P,t}, k,\mb r,\natural_{q\cE})}_{
\mh H (\mf t,W_P \Gamma_{P,t,\tau}, k,\mb r,\natural_{q\cE})} (\rho \otimes \tau) 
\]
for a suitable projective representation $\rho$ of $\Gamma_{P,t,\tau}$. In that notation, 
\eqref{eq:1.25} becomes 
\[
\ind_{\mh H (\mf t,W_P \Gamma_{P,t,\tau},k,\mb r,\natural_{q\cE})}^{
\mh H (\mf t,W_{q\cE}, k, \mb r, \natural_{q\cE})} (\rho \otimes \tau \otimes t) =
\ind_{\mh H (\mf t,W_\cE \Gamma_{P,t,\tau},k,\mb r,\natural_{q\cE})}^{
\mh H (\mf t,W_{q\cE}, k, \mb r, \natural_{q\cE})} \big( \rho \otimes
\ind_{\mh H (\mf t,W_P,k,\mb r)}^{\mh H (\mf t,W_\cE, k, \mb r)} (\tau \otimes t) \big) .
\]
This shows that every Langlands standard module of 
$\mh H (\mf t,W_{q\cE}, k, \mb r, \natural_{q\cE})$ is an indecomposable direct summand of
\begin{equation}\label{eq:1.26}
\ind^{\mh H (\mf t,W_{q\cE}, k, \mb r, \natural_{q\cE})}_{\mh H (\mf t,W_\cE, k, \mb r)} V
\end{equation}
for some Langlands standard $\mh H (\mf t,W_\cE, k, \mb r)$-module $V$, namely 
$\ind_{\mh H (\mf t,W_P,k,\mb r)}^{\mh H (\mf t,W_\cE, k, \mb r)} (\tau \otimes t)$ above.
If $Q \subset G$ is the standard parabolic subgroup such that $W_{q\cE}^Q = W_P \mc R_{P,t}$, then
\[
\mh H (Q,M,q\cE) = \mh H (\mf t,W_P \Gamma_{P,t}, k,\mb r,\natural_{q\cE}) 
\]
and the Langlands standard module \eqref{eq:1.25} depends only on the data $(Q,\tau',t)$ 
up to $G$-conjugacy.
The relations between the various kinds of standard $\mh H (G,M,q\cE)$-modules are as follows.

\begin{prop}\label{prop:1.13}
Let $r \in \C^\times$.
\enuma{
\item When $\Re (r) > 0$, the Langlands standard modules of $\mh H (G,M,q\cE) / (\mb r - r)$ 
are precisely the analytic standard modules.
\item When $\Re (r) < 0$, the Langlands standard modules of $\mh H (G,M,q\cE) / (\mb r - r)$ 
are precisely the geometric standard modules.
\item The Langlands standard modules of $\mh H (G,M,q\cE) / (\mb r)$ are precisely its 
irreducible modules.
}
\end{prop}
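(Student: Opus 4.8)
The plan is to prove (b) directly, to deduce (a) from it by the sign automorphism, and to handle (c) through the degeneration of $\mh H(G,M,q\cE)$ at $\mb r=0$. Throughout, a ``Langlands standard module of $\mh H(G,M,q\cE)$'' is one on which $\mb r$ acts by the given scalar $r$, i.e.\ a Langlands standard module of $\mh H(G,M,q\cE)/(\mb r-r)$. For the reduction of (a) to (b): the automorphism $\sgn$ fixes $\mc O(\mf t)$ pointwise and negates $\mb r$, hence restricts to every parabolic subalgebra $\mh H(Q,M,q\cE)$, and $\sgn^*$ is an equivalence $\Mod\big(\mh H(G,M,q\cE)/(\mb r-r)\big)\isom\Mod\big(\mh H(G,M,q\cE)/(\mb r+r)\big)$ carrying analytic standard modules to geometric standard modules, and conversely, by the definition of the former. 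Moreover $\sgn^*$ preserves the class of Langlands standard modules: it does not alter $\mc O(\mf t)$-weights, so by Theorem~\ref{thm:1.10} a module is tempered if and only if its $\sgn^*$-twist is; ``positive position'' is a condition on a character $t\in\mf t^*$ depending only on the roots, hence is $\sgn^*$-invariant; and the inducing parabolic is unchanged. Thus (a) for $\Re(r)>0$ is equivalent to (b) for $-r$, which has $\Re(-r)<0$.

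For (b), assume $\Re(r)<0$. The decisive input, which I would extract from \cite[\S 3.5 and \S 4]{AMS2} (recording it explicitly if need be) together with the compatibility of the geometric realisation with parabolic induction, is that every geometric standard module is a parabolic induction of a twisted irreducible tempered module in positive position. Given an enhanced L-parameter $(y,\sigma,r,\rho)$, write $\sigma=\sigma_0+\textup{d}\gamma_y\matje{r}{0}{0}{-r}$ as in \eqref{eq:1.22} and split $\sigma_0$ into its bounded part and its real part $\sigma_0^{\mr{re}}$; after making $\sigma_0^{\mr{re}}$ dominant, the roots vanishing on it are those generated by a set $P$ of simple roots, for whose standard Levi the parameter is bounded. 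With $Q\subset G$ the standard parabolic subgroup such that $W_{q\cE}^Q=W_P\Gamma_{P,t}$ as around \eqref{eq:1.25}, one obtains
\[
E_{y,\sigma,r,\rho}\;\cong\;\ind_{\mh H(Q,M,q\cE)}^{\mh H(G,M,q\cE)}(\tau'\otimes t),
\]
where $\tau'$ is the irreducible module entering the Langlands datum \eqref{eq:1.25}, built via the Clifford theory recalled there from an irreducible tempered $\mh H^P$-module (temperedness by Theorem~\ref{thm:1.10}.a inside the Levi), and $t$ is the $\mc O(\mf t)$-character determined by $\sigma_0^{\mr{re}}$. One checks that $t$ is in \emph{strictly} positive position with respect to $P$: this is precisely the dominance that makes $M_{y,\sigma,r,\rho}$ the unique irreducible quotient of $E_{y,\sigma,r,\rho}$, and the sign $\Re(r)<0$ selects which of the two opposite chambers is correct. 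The disconnectedness of $G$ and the cocycle $\natural_{q\cE}$ are absorbed by the Clifford-theoretic bookkeeping recalled around \eqref{eq:1.25}--\eqref{eq:1.26} (the groups $\Gamma_{P,t}$, $\Gamma_{P,t,\tau}$ and the projective $\rho$). Hence every geometric standard module is a Langlands standard module.

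To finish (b) I would count rather than track Langlands quotients. For $r\neq0$ the modules $E_{y,\sigma,r,\rho}$ are pairwise non-isomorphic, since each has the unique irreducible quotient $M_{y,\sigma,r,\rho}$ and $(y,\sigma,r,\rho)\mapsto M_{y,\sigma,r,\rho}$ is a bijection onto $\Irr\big(\mh H(G,M,q\cE)/(\mb r-r)\big)$ by Theorem~\ref{thm:1.1}; the Langlands standard modules of $\mh H(G,M,q\cE)/(\mb r-r)$ are likewise pairwise non-isomorphic and in bijection with the same set of irreducibles \cite{Eve}. The previous paragraph gives an injection of one finite family into the other, hence a bijection, proving (b); applying $\sgn^*$ gives (a). For (c), at $r=0$ the cross relation $N_{s_\alpha}f-(f\circ s_\alpha)N_{s_\alpha}=k_\alpha\mb r(f-f\circ s_\alpha)/\alpha$ degenerates, so $\mh H(G,M,q\cE)/(\mb r)\cong\mc O(\mf t)\rtimes\C[W_{q\cE},\natural_{q\cE}]$ is a twisted crossed product of $\mc O(\mf t)$ by a finite group. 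Since every irreducible module is the unique irreducible quotient of a Langlands standard module, it suffices to show every Langlands standard module of $\mh H(G,M,q\cE)/(\mb r)$ is already irreducible. For $\mb r=0$, temperedness of $\tau'$ forces the real part of its central character to be central in the Levi subalgebra corresponding to $P$, while $t$ is strictly dominant on the positive roots outside $P$; hence every $\mc O(\mf t)$-weight of $\tau'\otimes t$ has real part strictly dominant there, the $W_{q\cE}$-stabiliser of the support of $\tau'\otimes t$ lies in the subgroup along which we induced, and a Mackey argument in the crossed product shows $\ind(\tau'\otimes t)$ is irreducible. This proves (c).

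The main obstacle is the geometric input of the second paragraph: that the homology-theoretic module $E_{y,\sigma,r,\rho}$ really is a parabolic induction of an irreducible twisted tempered module \emph{with the twist in strictly positive position}. Establishing that the realisation on $\mf g_N^{\sigma,r}$ is compatible with parabolic induction, and pinning down the correct chamber so that the geometric quotient $M_{y,\sigma,r,\rho}$ coincides with the algebraic Langlands quotient, is the delicate step; by comparison, the disconnected-group bookkeeping with $\natural_{q\cE}$ and the exact shape of ``tempered'' at $\mb r=0$ reduce to routine computations (the latter involving the negative cone of \cite[Definition 3.24]{AMS2}).
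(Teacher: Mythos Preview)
Your treatment of (a) and (b) matches the paper's. The paper cites \cite[Proposition~B.4]{SolKL} for (a) and says (b) follows by applying $\sgn^*$ throughout that proof; you do the same thing with the roles of (a) and (b) reversed, and you correctly identify the substantive step (that $E_{y,\sigma,r,\rho}$ is parabolically induced from an irreducible tempered module twisted into strictly positive position) as the content of that reference. Your counting argument is fine once restricted to a fixed central character, where both families are finite and in bijection with $\Irr$.

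For (c) your route is genuinely different from the paper's. The paper argues geometrically: a Langlands standard module at $\mb r = 0$ is a direct summand of $\ind_{\mh H(Q,M,q\cE)}^{\mh H(G,M,q\cE)} E^Q_{y,\sigma,0}$, which by \cite[Theorem~B.2]{SolKL} equals $E_{y,\sigma,0}$, and the latter is completely reducible by \cite[Lemma~3.19]{AMS2}; combined with the unique-quotient property this forces irreducibility. Your approach is purely algebraic: at $\mb r = 0$ the algebra degenerates to the crossed product $\mc O(\mf t) \rtimes \C[W_{q\cE},\natural_{q\cE}]$, and you invoke Mackey. This buys you independence from the perverse-sheaf machinery, at the cost of a sharper weight computation than you actually give.

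The imprecision is here: ``real part of its central character central in the Levi'' (i.e.\ orthogonal to $P$) is what temperedness at $\mb r=0$ gives for free from the $W_P$-orbit structure, but that alone does not force $\mr{Stab}_{W_{q\cE}}(\Re\mu + t) \subset W_P\Gamma_{P,t}$, since a nonzero $\Re\mu$ in the centre-of-Levi direction could push $\Re\mu + t$ onto a different wall. What you need, and what does hold, is that the real parts of the weights of $\tau'$ are \emph{zero}: the tempered cone for $\mh H^P$ is $-\sum_{\alpha\in P}\R_{\ge 0}\alpha + i\mf t_\R$, the $W_P$-orbit of $\Re\mu$ lies in the salient cone $-\sum_{\alpha\in P}\R_{\ge 0}\alpha$, and its $W_P$-average is both in this cone and $W_P$-fixed (hence orthogonal to the cone), so zero; salience then forces each orbit element to vanish. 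With $\Re\mu=0$ every weight of $\tau'\otimes t$ has real part exactly $t$, whose $W_{q\cE}$-stabiliser is $W_P\Gamma_{P,t}$ (since $\Gamma_{q\cE}$ preserves $\Delta$ and $P$ is recovered from $t$ as $\{\alpha\in\Delta:\langle\alpha^\vee,t\rangle=0\}$), and now Mackey gives irreducibility. So your argument for (c) is correct once this gap is closed.
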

\begin{proof}
(a) This is shown in \cite[Proposition B.4]{SolKL}.\\
(b) This can be shown in the same way as \cite[Proposition B.4]{SolKL}, just apply $\sgn^*$
to all the modules in the proof of parts (c) and (d).\\
(c) By Clifford theory in the form \cite[Lemma 8.4]{SolEnd}, the set of irreducible 
representations of
\[
\mh H (G,M,q\cE) / (\mb r) = \mc O (\mf t) \rtimes \C [W_{q\cE},\natural_{q\cE}]
\]
with central character $W_{q\cE} c$ is naturally in bijection with $\Irr (\C [(W_{q\cE})_c,
\natural_{q\cE}])$, via $\ind_{\mc O (\mf t) \rtimes \C [(W_{q\cE})_c,
\natural_{q\cE}]}^{\mc O (\mf t) \rtimes \C [(W_{q\cE}),\natural_{q\cE}]}$. 
Consider a Langlands standard module
\[
V = \ind_{\mh H (\mf t,W_P \Gamma_{P,t},k,\mb r,\natural_{q\cE})}^{
\mh H (\mf t,W_{q\cE}, k, \mb r, \natural_{q\cE})} (\tau' \otimes t) = 
\ind_{\mc O (\mf t) \rtimes \C [W_P \Gamma_{P,t},\natural_{q\cE}]}^{
\mc O (\mf t) \rtimes \C [W_{q\cE},\natural_{q\cE}]} (\tau' \otimes t) .
\]
Since $\tau' \in \Irr (\mc O (\mf t) \rtimes \C [W_P \Gamma_{P,t},\natural_{q\cE}])$ is tempered, 
the real part of its central character $W_P \Gamma_{P,t}c$ is 0. Hence the real part of the central 
character $W_P \Gamma_{P,t}ct$ of $\tau' \otimes t$ is $t$. Furthermore $t$ is positive with 
respect to the simple roots outside $P$, so 
\[
(W_{q\cE})_t \subset \mr{Stab}_{W_{q\cE}} (\Z P, t) \subset W_P \mr{Stab}_{\Gamma_{q\cE}}(\Z P, t) 
= W_P \Gamma_{P,t}.
\] 
Now \cite[Lemma 8.4]{SolEnd} says that $\Irr (\C [(W_{q\cE})_{ct},\natural_{q\cE}])$ parametrizes
the irreducible modules with central character represented by $ct$ of both 
$\mc O (\mf t) \rtimes \C [W_{q\cE},\natural_{q\cE}]$ and 
$\mc O (\mf t) \rtimes \C [W_P \Gamma_{P,t},\natural_{q\cE}]$. It follows that
$\ind_{\mc O (\mf t) \rtimes \C [W_P \Gamma_{P,t},\natural_{q\cE}]}^{
\mc O (\mf t) \rtimes \C [W_{q\cE},\natural_{q\cE}]}$ preserves the irreducibility of 
$\tau' \otimes t$, so $V$ is irreducible.

By the Langlands classification from \cite{Eve}, in the form \cite[Proposition 8.5]{SolEnd}, 
every irreducible $\mh H (G,M,q\cE)/(\mb r)$-module $V'$ occurs as a quotient
of some Langlands standard module $V$. By the irreducibility of $V$ we have $V' = V$.
\end{proof}

\section{Generic representations of graded Hecke algebras}
\label{sec:generic}

In this section we assume that the 2-cocycle $\natural_{q\cE}$ involved in $\mh H (G,M,q\cE)$
is trivial. Then \eqref{eq:1.3} simplifies to
\begin{equation}\label{eq:2.1}
\mh H (G,M,q\cE) = \mh H (G^\circ,M^\circ,\cE) \rtimes \Gamma_{q\cE} ,
\end{equation} 
see \cite[\S 4]{AMS2}. In other words, $\mh H (G,M,q\cE)$ is a graded Hecke algebra extended
with a finite group. The triviality of $\natural_{q\cE}$ is known when $\mh H (G,M,q\cE)$ arises: 
\begin{itemize}
\item[(i)] from an extended affine Hecke algebra $\mc H \rtimes \Gamma$
via localization, as in \cite{Lus-Gr,SolAHA,AMS3},
\item[(ii)] from a classical $p$-adic group \cite{Hei,AMS4},
\item[(iii)] from a Bernstein component of a reductive $p$-adic group, such that the
underlying supercuspidal representations are simply generic \cite[Theorem~A.1]{OpSo}.
\end{itemize}
In the references to (ii) and (iii) this is shown for the relevant extended affine Hecke 
algebras, and then one can apply (i).

Recall that $\det : W_{q\cE} \to \{\pm 1\}$ denotes the determinant of the action of $W_{q\cE}$
on $X^* (T)$. It can also be regarded as a onedimensional representation of $\C [W_{q\cE}]$.
We say that 
\begin{equation}\label{eq:2.13}
\text{a } \mh H (G,M,q\cE)\text{-module } V \text{ is generic if }
\Res^{\mh H (G,M,q\cE)}_{\C [W_{q\cE}]} V \text{ contains det.}
\end{equation} 
This compares well with the definition of genericity for representations of extended affine 
Hecke algebras, see Theorem \ref{thm:3.1}.
Like for quasi-split reductive groups \cite{Rod,Shal}, there are multiplicity one properties
for generic representations of extended graded Hecke algebras.

\begin{prop}\label{prop:2.2}
Let $Q \subset G$ be a quasi-Levi subgroup containing $M$, so that $\mh H (Q,M,q\cE)$ is a
parabolic subalgebra of $\mh H (G,M,q\cE)$. Let $(\pi,V)$ be a $\mh H (Q,M,q\cE)$-module.
\enuma{
\item The multiplicity of $\det$ in $\Res^{\mh H (G,M,q\cE)}_{\C [W_{q\cE}} \big( 
\ind_{\mh H (Q,M,q\cE)}^{\mh H (G,M,q\cE)} V \big)$ equals the multiplicity of $\det$ in $V$,
as representations of the version $W_{q\cE}^Q$ of $W_{q\cE}$ for $(Q,M,q\cE)$. In particular
$V$ is generic if and only if $\ind_{\mh H (Q,M,q\cE)}^{\mh H (G,M,q\cE)} V$ is generic.
\item Suppose that $(\pi,V)$ is irreducible and generic. Then 
\[
\dim \Hom_{\C [W_{q\cE}]} \big( \ind_{\mh H (Q,M,q\cE)}^{\mh H (G,M,q\cE)} V, \det \big) = 1
\]
and $\ind_{\mh H (Q,M,q\cE)}^{\mh H (G,M,q\cE)} V$ has a unique generic irreducible
subquotient, appearing with multiplicity one.
\item $\dim \Hom_{\C [W_{q\cE}]} (\sgn^* (E_{y,\sigma,r,\rho}), \det )\leq 1$ for every 
enhanced L-parameter $(y,\sigma,r,\rho)$ for $\mh H(G,M,q\cE)$.
}
\end{prop}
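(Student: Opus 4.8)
The plan is to strip off the sign automorphism, use the Langlands classification to realize the standard module as parabolically induced from an irreducible essentially tempered module, and then reduce by Frobenius reciprocity to part (b). For the first step, $\sgn$ fixes $\mc O(\mf t \oplus \C)$ and sends $N_w$ to $\det(w) N_w$, so for every module $V$ one has $\Res^{\mh H(G,M,q\cE)}_{\C[W_{q\cE}]} \sgn^*(V) \cong \big( \Res^{\mh H(G,M,q\cE)}_{\C[W_{q\cE}]} V \big) \otimes \det$; as $\det \otimes \det = \triv$ and $\C[W_{q\cE}]$ is semisimple, the assertion is equivalent to saying that the trivial representation of $W_{q\cE}$ occurs with multiplicity at most one in $\Res^{\mh H(G,M,q\cE)}_{\C[W_{q\cE}]} E_{y,\sigma,r,\rho}$.

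Next I would reduce to the case $\Re(r) < 0$, using that the $\C[W_{q\cE}]$-module structure of a standard module is independent of the deformation parameter within an algebraic family of enhanced L-parameters (these modules form a flat family with an algebraically varying, hence by finiteness of $W_{q\cE}$ locally constant, $W_{q\cE}$-action): with $h = \textup{d}\gamma_y \matje{1}{0}{0}{-1}$ and $\sigma_0 = \sigma - r h$ as in \eqref{eq:1.22}, the parameter $(y,\sigma,r,\rho)$ lies in the family $(y, \sigma_0 + r' h, r', \rho)$, $r' \in \C$ (the quasi-cuspidal support condition involves only $(y,\sigma_0,\rho)$), so $\Res^{\mh H(G,M,q\cE)}_{\C[W_{q\cE}]} E_{y,\sigma,r,\rho}$ is the same for all $r'$ and I may take $\Re(r) < 0$. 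Then Proposition \ref{prop:1.13}.b identifies $E_{y,\sigma,r,\rho}$ with a Langlands standard module, hence $E_{y,\sigma,r,\rho} \cong \ind_{\mh H(Q,M,q\cE)}^{\mh H(G,M,q\cE)}(\tau' \otimes t)$ for some quasi-Levi $Q \supseteq M$ and some irreducible tempered $\mh H(Q,M,q\cE)$-module $\tau'$, twisted by $t$ in positive position.

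The PBW decomposition $\mh H(G,M,q\cE) = \mc O(\mf t \oplus \C) \otimes \C[W_{q\cE}]$ (and its analogue for $Q$) turns $\Res^{\mh H(G,M,q\cE)}_{\C[W_{q\cE}]} \ind_{\mh H(Q,M,q\cE)}^{\mh H(G,M,q\cE)}$ into $\ind_{\C[W_{q\cE}^Q]}^{\C[W_{q\cE}]} \Res^{\mh H(Q,M,q\cE)}_{\C[W_{q\cE}^Q]}$ --- this is the computation behind part (a) --- and $\otimes\, t$ leaves the $W_{q\cE}^Q$-structure unchanged, so Frobenius reciprocity gives $\Hom_{\C[W_{q\cE}]}( E_{y,\sigma,r,\rho}, \triv ) \cong \Hom_{\C[W_{q\cE}^Q]}( \tau', \triv )$. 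It remains to bound the multiplicity of $\triv$ in the irreducible $\mh H(Q,M,q\cE)$-module $\tau'$, and here I would argue by dichotomy: either that multiplicity is $0$ and there is nothing to prove, or it is positive, in which case $\sgn^* \tau'$ is an irreducible generic $\mh H(Q,M,q\cE)$-module, and part (b) applied to the geometric graded Hecke algebra $\mh H(Q,M,q\cE)$ itself (with the trivial parabolic, $Q$ being a quasi-Levi of $Q$) gives $\dim \Hom_{\C[W_{q\cE}^Q]}(\sgn^* \tau', \det) = 1$, i.e.\ $\triv$ occurs exactly once in $\tau'$.

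Two points will require care. First, the rigidity/family step used to reach $\Re(r) < 0$; absent it, one must run the argument separately for $\Re(r) > 0$ --- where the analytic standard modules are the Langlands standard modules (Proposition \ref{prop:1.13}.a) --- and for $r = 0$, using the complete reducibility in Proposition \ref{prop:1.13}.c. Second, and this is the genuine content, the base case invoked through part (b): an irreducible generic module of a geometric graded Hecke algebra has $\det$-multiplicity exactly $1$ --- the Rodier--Shalika-type multiplicity-one recalled just before the proposition --- which holds directly for connected groups and reduces to that case through the semidirect product description \eqref{eq:2.1} and Clifford theory in general. One should also note that when the Langlands parabolic above is $Q = G$, so that $E_{y,\sigma,r,\rho}$ is itself (essentially) tempered, the reduction degenerates; but for $\Re(r) < 0$ such a module has a bounded parameter by Theorem \ref{thm:1.10}.a, hence an open parameter by Lemma \ref{lem:1.9}.b, hence equals its irreducible quotient by Lemma \ref{lem:1.7}, and the dichotomy can then be applied to that irreducible module directly.
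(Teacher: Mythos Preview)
Your argument is correct and follows the same overall strategy as the paper: realize the standard module as parabolically induced from an irreducible module of a parabolic subalgebra, apply part (a)/Frobenius reciprocity, then invoke part (b) for that irreducible. The difference is in how this realization is obtained. You first deform to $\Re(r)<0$ via the family argument and then use Proposition \ref{prop:1.13}.b to identify $E_{y,\sigma,r,\rho}$ with a Langlands standard module $\ind_{\mh H(Q,M,q\cE)}^{\mh H(G,M,q\cE)}(\tau'\otimes t)$ for some (abstract) $Q$ and an irreducible tempered $\tau'$. The paper instead works at the given $r$ and cites the geometric compatibility result \cite[Theorem B.2]{SolKL} to write $E_{y,\sigma,r,\rho}=\ind_{\mh H(Q,M,q\cE)}^{\mh H(G,M,q\cE)} E^Q_{y,\sigma,r,\rho}$ with the concrete Levi $Q=Z_G(\sigma_0)$; since $(y,\sigma,r)$ is essentially bounded for $Q$, Lemmas \ref{lem:1.9}.b and \ref{lem:1.7} make $E^Q_{y,\sigma,r,\rho}$ irreducible for $r\neq 0$, and the family argument is invoked only to cover $r=0$. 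Your route is somewhat more self-contained within the paper (Proposition \ref{prop:1.13} is already available), while the paper's route pins down the parabolic explicitly. The special case $Q=G$ you single out is not really exceptional: $\tau'$ is irreducible by construction in the Langlands classification, so the dichotomy and part (b) apply uniformly without the extra appeal to Theorem \ref{thm:1.10}.a.
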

\begin{proof}
(a) and (b) These can be shown in the same way as for extended affine Hecke algebras, see 
\cite[Lemma 3.5]{SolQS} and \cite[Lemma 7.2]{OpSo}. Alternatively, one can apply 
\cite[Theorems 6.1 and 6.2]{SolQS} to \cite[Lemma 3.5]{SolQS}.\\
(c) Conjugating the parameters by a suitable element of $N_{G^\circ} (T)$, we may assume that
$\Re (\sigma_0 )$ lies in the closed positive cone in $\mf t_\R = \R \otimes_\Z X_* (T)$. 
Alternatively, we can maneuver $\Re (-\sigma_0 )$ to the closed positive cone. Therefore we 
can arrange that we are in one of the situations where \cite[Lemma B.3]{SolKL} applies, with
$Q = Z_G (\sigma_0)$. It says that $\epsilon (\sigma,r) \neq 0$, which is needed to use 
\cite[Theorem B.2]{SolKL}. That result tells us
\[
E_{y,\sigma,r,\rho} = \ind_{\mh H (Q,M,q\cE)}^{\mh H (G,M,q\cE)} \, E^Q_{y,\sigma,r,\rho} .
\]
This remains valid upon applying $\sgn^*$ on both sides, by the isomorphism
\[
\begin{array}{ccc}
\ind_{\mh H (Q,M,q\cE)}^{\mh H (G,M,q\cE)} (\sgn^* E^Q_{y,\sigma,r,\rho}) & \to &
\sgn^* \big( \ind_{\mh H (Q,M,q\cE)}^{\mh H (G,M,q\cE)} \, E^Q_{y,\sigma,r,\rho} \big) \\
h \otimes v & \mapsto & \sgn (h) \otimes v 
\end{array}.
\]
Now part (a) shows it suffices to prove that 
\begin{equation}\label{eq:2.14}
\dim \Hom_{\C [W^Q_{q\cE}]} \big( \sgn^* (E^Q_{y,\sigma,r,\rho}), \det \big) \leq 1 .
\end{equation}
Notice that $\sigma_0 \in Z (\mf q)$, so that $(y,\sigma,r)$ is an essentially bounded parameter
for $\mh H (Q,M,q\cE)$.
Suppose for the moment that $r \neq 0$. Then Lemma \ref{lem:1.9}.b says that $(y,\sigma,r)$ 
is an open parameter, and by Lemma \ref{lem:1.7} $E^Q_{y,\sigma,r,\rho}$ is irreducible. 
In this case part (b) proves \eqref{eq:2.14}.

Recall from \cite[Lemma 3.6]{AMS2} that $\pi_0 (Z_G (\sigma_r,y)) = \pi_0 ( Z_G (\sigma_0,y))$ 
does not depend on $r$, where $\sigma_r = \sigma_0 + \textup{d}\gamma_y (\matje{r}{0}{0}{-r})$. 
Hence there is a family of $\mh H (Q,M,q\cE)$-modules $\sgn^* E_{y,\sigma_r,r,\rho}$, parametrized 
by $r \in \C$. It follows from \cite[Theorem 3.2.b]{SolKL} that the underlying family of 
$\C [W_{q\cE}^Q]$-modules is constant. We already showed that for $r \neq 0$ it contains 
det at most one time, so the same holds when $r = 0$.
\end{proof}

It is known, for unipotent representations of (adjoint) $p$-adic groups from \cite{Ree}
and for principal series representations of quasi-split $p$-adic groups from \cite{SolQS},
that the Langlands parameters of generic representations are precisely the open parameters,
with the trivial representation of a component group as enhancement. We intend to prove
an analogous statement for extended graded Hecke algebras.

\begin{lem}\label{lem:2.3}
Fix $(\sigma,r) = (\sigma_0 + r \sigma_v,r) \in \mf t \oplus \C (\sigma_v,1)$. 
\enuma{
\item Every irreducible $\mh H (G,M,q\cE)$-module with central character $(W_{q\cE}\sigma_0 ,r)$
is a subquotient of $\ind_{\mc O (\mf t \oplus \C)}^{\mh H (G,M,q\cE)} \C_{\sigma_0 ,r}$.
\item Up to $Z_G (\sigma)$-conjugacy, there exists precisely one enhanced L-parameter
$(y,\sigma,r,\rho)$ for $\mh H(G,M,q\cE)$ such that $\sgn^* (M_{y,\sigma,r,\rho})$ is generic.
}
\end{lem}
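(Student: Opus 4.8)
\textbf{Proof plan for Lemma \ref{lem:2.3}.b.}
The plan is to prove existence and uniqueness separately, reducing in both cases to the generic-representation machinery for extended graded Hecke algebras assembled in Section \ref{sec:generic}. For \emph{existence}, I would use the Langlands classification for $\mh H (G,M,q\cE)$: every irreducible module is the unique irreducible quotient of a Langlands standard module \eqref{eq:1.25}, which by Proposition \ref{prop:1.13} is (after twisting by $\sgn^*$ as needed, or directly when $r = 0$) one of the standard modules $\sgn^* E_{y,\sigma,r,\rho}$ indexed by enhanced L-parameters. Since the $\C[W_{q\cE}]$-module structure of $\sgn^* E_{y,\sigma,r,\rho}$ is governed by \eqref{eq:1.11}--\eqref{eq:1.13}, and since for an open parameter $(y_g,\sigma,r,\rho_g)$ we have $\sgn^* E_{y_g,\sigma,r,\rho_g} = \sgn^* M_{y_g,\sigma,r,\rho_g}$ by Lemma \ref{lem:1.7}, it suffices to locate one open enhanced parameter whose associated standard module contains $\det$ upon restriction to $\C[W_{q\cE}]$. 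Here I would invoke Lemma \ref{lem:2.3}.a together with the fact that $\ind_{\mc O (\mf t \oplus \C)}^{\mh H (G,M,q\cE)} \C_{\sigma_0,r}$ restricted to $\C[W_{q\cE}]$ is the regular representation (induced from a onedimensional module of the commutative subalgebra), which contains $\det$; so some irreducible subquotient with this central character is generic, and by Theorem \ref{thm:1.8} combined with Theorem \ref{thm:1.10} and Lemma \ref{lem:1.9}, tracking which standard module it sits in forces that irreducible generic constituent to come from an open parameter.

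For \emph{uniqueness}, the key input is Proposition \ref{prop:2.2}.c, which gives $\dim \Hom_{\C[W_{q\cE}]}(\sgn^* E_{y,\sigma,r,\rho},\det) \leq 1$ for every enhanced L-parameter. If two non-conjugate open enhanced parameters $(y_1,\sigma,r,\rho_1)$ and $(y_2,\sigma,r,\rho_2)$ both gave generic $\sgn^* M_{y_i,\sigma,r,\rho_i}$, then since openness forces these standard modules to be irreducible (Lemma \ref{lem:1.7}), each would contribute a copy of $\det$. To derive a contradiction I would then reduce, exactly as in the proof of Proposition \ref{prop:2.2}.c, to the case $r = 0$: by \cite[Lemma 3.6]{AMS2} the component group $\pi_0(Z_G(\sigma_0,y))$ and the $\C[W_{q\cE}]$-module structure of $\sgn^* E_{y,\sigma_r,r,\rho}$ are independent of $r$, so the count of generic constituents of the whole block $\Modf{\sigma,-r}(\mh H(G,M,q\cE))$ is the same for all $r$, and at $r = 0$ I can use that every standard module is completely reducible \cite[Lemma 3.19]{AMS2} so that generic subquotient $=$ generic summand.

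\textbf{The main obstacle.} The crux is showing that the \emph{whole} block $\Modf{\sigma,-r}(\mh H(G,M,q\cE))$, equivalently all of $\ind_{\mc O(\mf t \oplus \C)}^{\mh H(G,M,q\cE)} \C_{\sigma_0,r}$, contains $\det$ with multiplicity exactly one as a $\C[W_{q\cE}]$-module — the inequality from Proposition \ref{prop:2.2}.c is per standard module, not per block, so I must rule out $\det$ appearing in two different standard modules of the block. The clean way is a dimension/Euler-characteristic argument at $r = 0$: there $\mh H(G,M,q\cE)/(\mb r)$ is (a twisted form of) a crossed product $\mc O(\mf t) \rtimes W_{q\cE}$-type algebra and the generic subquotients of the block are controlled by the $W_{q\cE}$-module $\mc O(\mf t)_{\sigma_0} \otimes (\text{something})$ evaluated at the central character — and the multiplicity of $\det$ in the relevant induced module is $1$ by a Frobenius reciprocity / exterior-algebra computation (this is precisely the content underlying \cite[Lemma 3.5]{SolQS} and \cite[Lemma 7.2]{OpSo}). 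Having established that single-$\det$ count for the block at $r = 0$, the $r$-independence of the $\C[W_{q\cE}]$-structure transports it to general $r$, and the existence argument above pins the unique generic constituent to an open parameter, giving both halves of the statement.
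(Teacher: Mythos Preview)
Your core observation---that $\ind_{\mc O(\mf t \oplus \C)}^{\mh H(G,M,q\cE)} \C_{\sigma_0,r}$ restricts to $\C[W_{q\cE}]$ as the regular representation---is correct and, once you note that the regular representation contains $\det$ with multiplicity \emph{exactly} one, it is essentially the whole proof. Combined with part (a), this gives existence and uniqueness simultaneously: precisely one irreducible subquotient of the induced module is generic. The paper phrases the same computation via Proposition~\ref{prop:2.2}.a applied with $Q = M$ (so that $W_{q\cE}^M$ is trivial and $\dim \Hom_{\C[W_{q\cE}^M]}(\C_{\sigma_0,r},\det) = 1$), obtaining \eqref{eq:2.2}; this is equivalent to your regular-representation remark.

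You instead split into separate existence and uniqueness arguments and burden each with unneeded machinery. Two concrete problems arise. First, in your existence argument you attempt to show that the generic constituent has an \emph{open} parameter, citing Theorems~\ref{thm:1.8}, \ref{thm:1.10} and Lemma~\ref{lem:1.9}. None of those results connects genericity to openness; that implication is exactly Theorem~\ref{thm:2.5}.a, which is proved \emph{later} using Lemma~\ref{lem:2.3}.b as input. So this step does not follow from the cited results and, in any case, openness is not part of the statement of the present lemma. Second, your uniqueness argument begins by assuming that both hypothetical generic irreducibles come from open parameters, which is equally unjustified at this stage.

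The ``main obstacle'' you flag---that Proposition~\ref{prop:2.2}.c bounds the multiplicity of $\det$ only per standard module, not across the whole central-character block---is a real obstruction if one insists on working through~\ref{prop:2.2}.c. But it dissolves once you apply Proposition~\ref{prop:2.2}.a (or, equivalently, your own regular-representation observation) to the full induced module $\ind_{\mc O(\mf t \oplus \C)}^{\mh H(G,M,q\cE)} \C_{\sigma_0,r}$: that gives the block-level multiplicity directly as $1$. Your proposed detour through $r=0$, complete reducibility from \cite[Lemma 3.19]{AMS2}, and $r$-independence of the $\C[W_{q\cE}]$-structure would eventually reach the same endpoint, but it is redundant.
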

\begin{proof}
(a) Any irreducible $\mh H (G,M,q\cE)$-module $V$ with central character $(W_{q\cE}\sigma_0 ,r)$
has an $\mc O (\mf t \oplus \C)$-weight $(\sigma'_0 ,r)$ with $\sigma' \in W_{q\cE}\sigma_0$. Then
\[
\Hom_{\mh H (G,M,q\cE)} \big( \ind_{\mc O (\mf t \oplus \C)}^{\mh H (G,M,q\cE)} \C_{\sigma'_0,r}  
,V \big) \cong \Hom_{\mc O (\mf t \oplus \C)} (\C_{\sigma'_0,r},V) \neq 0 ,
\] 
so $V$ is a quotient of $\ind_{\mc O (\mf t \oplus \C)}^{\mh H (G,M,q\cE)} \C_{\sigma'_0 ,r}$.
On the other hand, $\ind_{\mc O (\mf t \oplus \C)}^{\mh H (G,M,q\cE)} \C_{\sigma'_0 ,r}$ and
$\ind_{\mc O (\mf t \oplus \C)}^{\mh H (G,M,q\cE)} \C_{\sigma_0 ,r}$ have the same irreducible
subquotients, with the same multiplicities \cite[Lemma 9.1.a]{SolGHA}.\\
(b) The group $W_{q\cE}^M$ for $\mh H (M,M,q\cE) \cong \mc O (\mf t \oplus \C)$ is trivial, so
\[
\Hom_{\C [W_{q\cE}^M]} (\C_{\sigma_0 ,r}, \det) \cong \C .
\]
By Proposition \ref{prop:2.2}.a also
\begin{equation}\label{eq:2.2}
\Hom_{\C [W_{q\cE}]} \big( \ind_{\mc O (\mf t \oplus \C)}^{\mh H (G,M,q\cE)} \C_{\sigma_0 ,r},
\det \big) \cong \C .
\end{equation}
By \cite[Theorem 4.6]{AMS2} every irreducible $\mh H (G,M,q\cE)$-module with central character
$(W_{q\cE}\sigma_0 ,-r)$ is of the form $\sgn^* (M_{y,\sigma,r,\rho})$. By part (a) and
\eqref{eq:2.2}, exactly one of these modules is generic. That corresponds to a unique
$G$-conjugacy class of $(y,\sigma,\rho)$, and since $(\sigma,r)$ was fixed we find that
$(y,\rho)$ is unique up to $Z_G (\sigma)$.
\end{proof}

The $\C[W_{q\cE}]$-module structure of $M_{y,\sigma,r,\rho}$ can be studied more easily in
the case $r = 0$, so we consider that first. 

\begin{prop}\label{prop:2.4}
Let $(y,\sigma_0,0)$ be an L-parameter which is open with respect to $(G,M,q\cE)$.
\enuma{
\item  There exists a unique enhancement $\rho_0 \in \Irr \big( \pi_0 
(Z_G (\sigma_0,y)) \big)$ such that\\ $\sgn^* (M_{y,\sigma_0,0,\rho_0})$ is a generic
$\mh H (G,M,q\cE)$-module.
\item Suppose that $v = 0$ and that $q\cE$ is the trivial equivariant local system on 
$\cC_v^M = \{0\}$. Then $\rho_0 =$ triv in part (a), $(y,\sigma_0,0)$ is open and
$y$ is regular in $Z_{\mf g}(\sigma_0)$.
}
\end{prop}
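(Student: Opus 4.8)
\textbf{Proof proposal for Proposition \ref{prop:2.4}.}

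The plan is to work entirely at $r=0$, where by \eqref{eq:1.23} the openness of $(y,\sigma_0,0)$ means precisely that $y$ is regular nilpotent in $Z_{\mf g}(\sigma_0)$, and then exploit the fact that $\mh H(G,M,q\cE)/(\mb r)$ is essentially a twisted group algebra. First I would record that $\mh H(G^\circ,M^\circ,\cE)/(\mb r) \cong \mc O(\mf t) \rtimes W_\cE$ and hence $\mh H(G,M,q\cE)/(\mb r) \cong \mc O(\mf t) \rtimes W_{q\cE}$, so that its module theory is governed by the orbits of $W_{q\cE}$ on $\mf t^*$ together with irreducible representations of stabilizer groups. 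Using Lemma \ref{lem:1.7}, the open parameter $(y,\sigma_0,0,\rho)$ gives $E_{y,\sigma_0,0,\rho} = M_{y,\sigma_0,0,\rho}$ for every admissible $\rho$, so the modules in play are honest irreducible modules and no Langlands-quotient subtleties intervene; by Proposition \ref{prop:1.13}.c all irreducible $\mh H(G,M,q\cE)/(\mb r)$-modules with the given central character arise this way. Existence and uniqueness of a generic one then follows from Lemma \ref{lem:2.3}.b applied with $r=0$: that lemma already tells us there is exactly one $Z_G(\sigma_0)$-conjugacy class of enhanced parameter $(y',\sigma_0,0,\rho')$ with $\sgn^*(M_{y',\sigma_0,0,\rho'})$ generic. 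The remaining content of part (a) is that this unique $y'$ may be taken to be our given open $y$ — equivalently, that the unique generic module is supported on the open orbit.

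To pin the generic module to the open orbit, I would argue via the geometry of Proposition \ref{prop:F} (Propositions \ref{prop:1.5} and \ref{prop:1.6}). The generic module $\sgn^* M_{y_g,\sigma_0,0,\rho_g}$ must, by Proposition \ref{prop:2.2}.a together with Lemma \ref{lem:2.3}.a, appear as a subquotient of $\sgn^*\ind_{\mc O(\mf t\oplus\C)}^{\mh H(G,M,q\cE)} \C_{\sigma_0,0}$, and the latter is, after choosing $Q = Z_G(\sigma_0)$ and invoking \cite[Theorem B.2]{SolKL} as in the proof of Proposition \ref{prop:2.2}.c, a direct summand of (an induction of) the geometric standard module attached to the regular nilpotent orbit in $Z_{\mf g}(\sigma_0)$ — i.e. to $y$ itself. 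Since that standard module is completely reducible at $r=0$ by \cite[Lemma 3.19]{AMS2}, its unique generic constituent, which by complete reducibility is a direct summand, is the module $\sgn^* M_{y,\sigma_0,0,\rho_0}$ for a uniquely determined $\rho_0$; this establishes (a). Alternatively — and this is the cleaner route if the bookkeeping with $Q$ gets heavy — I would reduce to the case $Q = G$, i.e. $\sigma_0 \in Z(\mf g)$ and $y$ regular in $\mf g$, and then directly analyze $\sgn^*\ind_{\mc O(\mf t)}^{\mh H(G,M,q\cE)/(\mb r)} \C_{\sigma_0}$: its $W_{q\cE}$-structure is the two-sided regular-type module and the multiplicity of $\det$ is one by \eqref{eq:2.2}, singling out one irreducible summand, which is $M_{y,\sigma_0,0,\rho_0}$ by the regularity of $y$.

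For part (b), assume $v = 0$ and $q\cE$ trivial on $\cC_v^M = \{0\}$. Then $M$ is a minimal quasi-Levi, $\sigma_v = 0$, so $\sigma = \sigma_0$, and the enhanced parameters $(y,\sigma_0,0,\rho)$ for $\mh H(G,M,q\cE)$ are governed by the Springer correspondence for $Z_G(\sigma_0)$ relative to the quasi-cuspidal support $(L,\{0\},\triv)$, exactly as in \eqref{eq:2.3}. I would then invoke the standard fact (Springer theory, cf. \cite{Lus-Cusp2}) that the regular nilpotent orbit corresponds to the \emph{trivial} (not the sign) representation of the relevant component group, and match this against the genericity criterion \eqref{eq:2.13}: the $\sgn^*$-twist converts the trivial-enhancement module on the regular orbit into the module whose $W_{q\cE}$-restriction contains $\det$, which is precisely the generic one. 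Concretely, one knows that $M_{y,\sigma_0,0,\triv}$ restricted to $\C[W_{q\cE}]$ contains the trivial representation (it is the ``big'' standard module's spherical constituent), so $\sgn^* M_{y,\sigma_0,0,\triv}$ contains $\det$, hence is generic; by the uniqueness in part (a), $\rho_0 = \triv$.

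\textbf{Main obstacle.} The delicate point is the identification in part (a) that the \emph{unique} generic module sits on the \emph{open} orbit rather than on some smaller orbit in its closure — a priori Lemma \ref{lem:2.3}.b only gives uniqueness, not which orbit. I expect to handle this by the complete-reducibility argument above (every generic constituent of the big standard module attached to the regular nilpotent is a direct summand, and the big standard module for a non-regular $y'$ simply cannot contain the regular-orbit module as a summand by the support estimate in \eqref{eq:1.12}), but making the reduction to $Q = Z_G(\sigma_0)$ airtight — in particular that $\epsilon(\sigma_0,0) \ne 0$ so that \cite[Theorem B.2]{SolKL} genuinely applies at $r=0$, which is exactly the content of \cite[Lemma B.3]{SolKL} — is where the real care is needed. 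Part (b) is then essentially a citation of Springer theory plus the $\sgn^*$ normalization, and should go through routinely.
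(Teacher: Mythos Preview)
Your existence-and-uniqueness reduction via Lemma \ref{lem:2.3}.b is fine, and your treatment of part (b) is essentially the paper's own (classical Springer correspondence plus the $\sgn^*$-twist). The genuine gap is in part (a), precisely at the point you flag as the ``Main obstacle'': you never actually prove that the unique generic module lives on the \emph{open} orbit, and neither of your proposed resolutions works.

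First, the identification of $\ind_{\mc O(\mf t\oplus\C)}^{\mh H(G,M,q\cE)}\C_{\sigma_0,0}$ with ``the geometric standard module attached to the regular nilpotent'' is incorrect: via \cite[Theorem B.2]{SolKL} that induced module is (up to component-group bookkeeping) $E_{v,\sigma_0,0}$ with $v \in \cC_v^M$, attached to the \emph{cuspidal-support} orbit, not the regular one. Its constituents are \emph{all} $M_{y',\sigma_0,0,\rho'}$ with the given central character, so complete reducibility at $r=0$ buys you nothing toward isolating the open orbit. Second, the support estimate \eqref{eq:1.12}--\eqref{eq:1.13} runs in the wrong direction for your purpose: $E_{y',\sigma_0,0,\rho'}$ for non-regular $y'$ \emph{can} contain $M_{y,\sigma_0,0,\rho}$ for regular $y$, since one needs $\mc O_{y'} \subset \overline{\mc O_y}$, which is automatic when $y$ is open. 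Your alternative ``reduce to $Q=G$'' route ends with ``which is $M_{y,\sigma_0,0,\rho_0}$ by the regularity of $y$'', but that is exactly the assertion to be proven.

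What the paper does instead is compute the $\C[W_\cE]$-structure directly. Writing $Q = Z_G(\sigma_0)$ and working first over $G^\circ$, one has $M^\circ_{y,\sigma_0,0,\rho^\circ} = \ind_{\mh H(Q^\circ,M^\circ,\cE)}^{\mh H(G^\circ,M^\circ,\cE)} M^{Q^\circ}_{y,\sigma_0,0,\rho^\circ}$ by \cite[(34)]{AMS2}, and by \cite[(33)]{AMS2} its restriction to $\C[W_\cE]$ is $\ind_{\C[W_\cE^{Q^\circ}]}^{\C[W_\cE]} M_{y,\rho^\circ}$, where $M_{y,\rho^\circ}$ is the generalized Springer representation for $(Q^\circ,M^\circ,\cE)$. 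By Frobenius reciprocity this contains $\triv_{W_\cE}$ iff $M_{y,\rho^\circ}$ contains $\triv_{W_\cE^{Q^\circ}}$, and \cite[Theorem 9.2]{Lus-Int} characterizes the latter: it holds iff $\Ad(Q^\circ)y \cap (\cC_v^M \oplus \mf u \cap \mf q)$ is dense in $\cC_v^M \oplus \mf u \cap \mf q$, which regularity of $y$ in $\mf q$ guarantees. The same theorem pins down $\rho^\circ$ uniquely; Clifford theory then passes from $G^\circ$ to $G$ and produces $\rho_0 = \rho^\circ \rtimes \triv$. The missing ingredient in your argument is thus this characterization of $\triv_{W_\cE^{Q^\circ}}$ in the generalized Springer correspondence --- a representation-theoretic input that the purely categorical/support-estimate machinery does not supply.
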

\begin{proof}
(a) The condition to be checked is equivalent to:
$\Res^{\mh H (G,M,q\cE)}_{\C [W_{q\cE}]} M_{y,\sigma_0,0,\rho}$ contains $\mr{triv}_{W_{q\cE}}$.
First we consider the analogous question for $\mh H (G^\circ,M^\circ,\cE)$. To avoid confusion,
we endow modules for this algebra with a superscript $\circ$. Write $Q = Z_G (\sigma_0)$ and 
$\mf q = \mr{Lie}(Q) = Z_{\mf g}(\sigma_0)$. By \cite[(34)]{AMS2}, for 
$\rho^\circ \in \Irr (Z_{G^\circ}(\sigma_0,y))$:
\begin{equation}\label{eq:2.4}
M^\circ_{y,\sigma_0,0,\rho^\circ} = \ind_{\mh H (Q^\circ,M^\circ,\cE)}^{\mh H (G^\circ,M^\circ,\cE)}
M^{Q^\circ}_{y,\sigma_0,0,\rho^\circ} .
\end{equation}
The $\C[W_\cE]$-module structure of \eqref{eq:2.4} follows from \cite[(33)]{AMS2}:
\begin{equation}\label{eq:2.5}
\Res^{\mh H (G^\circ,M^\circ,\cE)}_{\C [W_\cE]} M^\circ_{y,\sigma_0,0,\rho^\circ} =
\ind_{\C [W_\cE^{Q^\circ}]}^{\C [W_\cE]} M_{y,\rho^\circ} ,
\end{equation}
where $M_{y,\rho^\circ}$ comes from the generalized Springer correspondence \cite{Lus-Int} for \\
$(Q^\circ,M^\circ,\cE)$. By Frobenius reciprocity \eqref{eq:2.5} contains $\mr{triv}_{W_\cE}$
if and only if $M_{y,\rho^\circ}$ contains $\mr{triv}_{W_\cE^{Q^\circ}}$. 
By \cite[Theorem 9.2]{Lus-Int}, the latter happens if and only if 
\[
\mr{Ad}(Q^\circ) y \cap (\cC_v^M \oplus \mf u \cap \mf q) \quad \text{is dense in} \quad
\cC_v^M \oplus (\mf u \cap \mf q) .
\]
This holds in our setting, by Lemma \ref{lem:3.A}. From \cite[Theorem 9.2]{Lus-Int} 
or Lemma \ref{lem:2.3} we obtain that $\rho^\circ$ is unique. 

Consider a $\rho \in \Irr \big( \pi_0 (Z_Q (y)) \big)$ whose restriction to 
$\pi_0 (Z_{Q^\circ}(y))$ contains $\rho^\circ$. In the notation from \cite[Lemma 3.18]{AMS2} we 
have $\rho = \rho^\circ \rtimes \tau^\vee$, where $\tau^\vee$ is an irreducible 
representation of the stabilizer $S_{\rho^\circ}$ of $\rho^\circ$ in $Z_Q (y) / Z_{Q^\circ}(y)$. 
Then \cite[(67)]{AMS2} says that
\begin{equation}\label{eq:2.6}
M_{y,\sigma_0,0,\rho} \cong \tau \ltimes M^\circ_{y,\sigma_0,0,\rho^\circ} , 
\end{equation}
where the latter module is described explicitly in \cite[Lemma 3.16]{AMS2}. This description 
simplifies a bit in our setup, because the 2-cocycles in \cite[\S 3]{AMS2} are by assumption 
trivial. Namely, the structure of \eqref{eq:2.6} as $\C[W_{q\cE}]$-module is 
\begin{equation}\label{eq:2.7}
\ind^{\C [W_{q\cE}]}_{\C [W'_{q\cE}]} (\tau \otimes J)
\end{equation}
for some extension $J$ of $\mr{triv}_{W_\cE}$ to $W'_{q\cE}$. Here 
$W_\cE \subset W'_{q\cE} \subset W_{q\cE}$ such that $W'_{q\cE} / W_\cE$ is naturally isomorphic to 
$S_{\rho^\circ}$. This $J$ comes from \cite[Proposition 3.15]{AMS2}, and it is only unique up 
characters. As the underlying vector space of $J$ is that of $\mr{triv}_{W_\cE}$, we may renormalize
the operators $J(\gamma)$ with $\gamma \in W'_{q\cE}$, and arrange that $J =$ triv. Now it is clear
that, if we take $\tau = \mr{triv}_{S_{\rho^\circ}}$, then \eqref{eq:2.7} contains 
$\mr{triv}_{W_{q\cE}}$. Thus $\rho_0 := \rho^\circ \rtimes \mr{triv}$ fulfills the requirements. 
By Lemma \ref{lem:2.3}.a it is unique.\\
(b) Under these assumptions, the generalized Springer correspondence for $(Q^\circ,M^\circ,\cE)$,
as encountered in the proof of part (a), becomes the classical Springer correspondence for
$Q^\circ$. Then \eqref{eq:2.5} contains $\mr{triv}_{W_\cE}$ if and only $y$ is regular nilpotent 
in $\mf q$ and $\rho^\circ = \mr{triv}$. (Since these constructions are in the end based on
\cite{Lus-Int}, we have to use the normalization of the Springer correspondence from there.) 
Then $\rho_0$ in part (a) reduces to $\mr{triv}_{\pi_0 (Z_{Q^\circ}(y))} \rtimes \mr{triv} =
\mr{triv}_{\pi_0 (Z_Q (y))}$. As $y$ is regular nilpotent, $(y,\sigma_0,0)$ is open.
\end{proof}

We are ready to complete the analysis of the L-parameters of generic irreducible 
$\mh H (G,M,q\cE)$-modules.

\begin{thm}\label{thm:2.5}
Consider an enhanced L-parameter $(y,\sigma,r)$ for $\mh H (G,M,q\cE)$.
\enuma{
\item If $(y,\sigma,r)$ is open with respect to $(G,M,q\cE)$, then 
$\sgn^* (M_{y,\sigma,r,\rho})$ is generic for a unique enhancement $\rho$, say $\rho_g$.
\item If, in the setting of part (a), $q\cE$ is the trivial equivariant local system 
on $\cC_v^M = \{0\}$, then $\rho_g = \mr{triv}$ and $(y,\sigma,r)$ is open.
\item If $(y,\sigma,r)$ is not open with respect to 
$(G,M,q\cE)$, then $\sgn^* (M_{y,\sigma,r,\rho})$ is not generic, for any $\rho$.
}
\end{thm}
\begin{proof}
(a) and (b) Recall from \eqref{eq:1.22} that 
\[
\sigma = \sigma_r = \sigma_0 + \textup{d}\gamma_y ( \matje{r}{0}{0}{-r}) ,
\]
and knowing $(y,\sigma,r)$ up to $G$-conjugacy is the same as knowing
$(y,\sigma_0,r)$ up to $G$-conjugacy. By \cite[Lemma 3.6]{AMS2} there is a natural
isomorphism $\pi_0 (C_y) \cong \pi_0 (Z_G (\sigma_0,y))$. For any $\rho \in 
\pi_0 (Z_G (\sigma_0,y))$ we can vary $r$ in $\C$ and obtain an algebraic family of 
$\mh H (G,M,q\cE)$-modules $E_{y,\sigma_r,r,\rho}$. 
By \cite[Theorem 3.2.b]{SolKL}, which is based on \cite{Lus-Cusp1}, the underlying 
family of $\C [W_{q\cE}]$-modules is constant. By Lemma \ref{lem:1.7} we have
$E_{y,\sigma_r,r,\rho} = M_{y,\sigma_r,r,\rho}$. Now Proposition \ref{prop:2.4} 
proves the claims.\\
(c) By Lemma \ref{lem:2.3}.b there are unique $(y',\rho')$ such that
$\sgn^* M_{y',\sigma,r,\rho'}$ is generic. By part (a) $(y',\sigma,r)$ is open
with respect to $(G,M,q\cE)$. Hence $\mc O_{y'} \neq \mc O_y$ and 
$\sgn^* M_{y,\sigma,r,\rho}$ is not generic.
\end{proof}

We conclude this section with a proof of the generalized injectivity conjecture for 
geometric graded Hecke algebras.

\begin{cor}\label{cor:2.7}
Assume that $q\cE$ is a trivial equivariant local system on $\cC_v^M = \{0\}$.
Let $E$ be an analytic standard $\mh H (G,M,q\cE)$-module and let $M$ be a generic
irreducible subquotient of $E$. Then $M$ is a submodule of $E$.
\end{cor}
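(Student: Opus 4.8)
The plan is to deduce the statement directly from the two structural results already established: Theorem~\ref{thm:2.5}.a, which says that a generic irreducible module has an open L-parameter, and Theorem~\ref{thm:1.8}.a, which says that an irreducible module with open L-parameter occurring as a subquotient of a geometric standard module is in fact a submodule of it. The sign automorphism $\sgn$ is what converts the \emph{analytic} standard module $E$ into a \emph{geometric} one.

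First I would fix a presentation $E = \sgn^* E_{y,\sigma,r,\rho}$ for a suitable enhanced L-parameter $(y,\sigma,r,\rho)$ of $\mh H(G,M,q\cE)$ (the precise sign of $\Re(r)$ will not matter). Since $\sgn$ is an algebra automorphism with $\sgn^2 = \mathrm{id}$, its pullback $\sgn^*$ is an exact self-equivalence of $\Mod (\mh H(G,M,q\cE))$; in particular it preserves irreducibility, the relation ``is a subquotient of'' and embeddings. Applying $\sgn^*$ to the semisimplification \eqref{eq:1.13} of $E_{y,\sigma,r,\rho}$ then shows that every irreducible subquotient of $E$ is isomorphic to $\sgn^* M_{y',\sigma,r,\rho'}$ for some enhanced L-parameter $(y',\sigma,r,\rho')$; I fix such a pair with $M \cong \sgn^* M_{y',\sigma,r,\rho'}$.

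Next, since $M$ is generic by hypothesis, $\sgn^* M_{y',\sigma,r,\rho'}$ is generic, so the contrapositive of Theorem~\ref{thm:2.5}.a forces $(y',\sigma,r)$ to be open. Pulling the subquotient relation back through $\sgn^*$ shows that $M_{y',\sigma,r,\rho'}$ is an irreducible subquotient of the geometric standard module $E_{y,\sigma,r,\rho}$, so Theorem~\ref{thm:1.8}.a yields an embedding $M_{y',\sigma,r,\rho'}\hookrightarrow E_{y,\sigma,r,\rho}$. Applying $\sgn^*$ once more gives $M \cong \sgn^* M_{y',\sigma,r,\rho'} \hookrightarrow \sgn^* E_{y,\sigma,r,\rho} = E$, which is the desired conclusion. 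No distinction on the value of $r$ is needed, since \eqref{eq:1.13}, Theorem~\ref{thm:2.5}.a and Theorem~\ref{thm:1.8}.a all hold for arbitrary $r$, including $r=0$. The only point deserving care — and the only thing I would check line by line — is the bookkeeping with $\sgn^*$: it twists the $\C[W_{q\cE}]$-structure by $\det$ and therefore does \emph{not} preserve genericity verbatim, but this is harmless here because Theorem~\ref{thm:2.5} is already phrased in terms of the genericity of $\sgn^* M_{y,\sigma,r,\rho}$, so the hypothesis on $M$ feeds straight into it. Getting that interplay right, rather than any hard estimate, is the main (and essentially only) obstacle.
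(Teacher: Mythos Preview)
Your argument is correct and follows essentially the same route as the paper: write $E = \sgn^* E_{y,\sigma,r,\rho}$, identify $M \cong \sgn^* M_{y',\sigma,r,\rho'}$, use Theorem~\ref{thm:2.5}.a to conclude $(y',\sigma,r)$ is open, then apply Theorem~\ref{thm:1.8}.a and pull back through $\sgn^*$. The only difference is that the paper appends one further sentence: invoking Proposition~\ref{prop:2.2}.c (uniqueness of the generic constituent), it upgrades ``$M$ is isomorphic to a submodule of $E$'' to ``$M$ \emph{is} a submodule of $E$'', i.e.\ the given subquotient, not just its isomorphism class, sits inside $E$---you could equally well cite Theorem~\ref{thm:1.8}.b for this.
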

\begin{proof}
Write $E = \sgn^* E_{y,\sigma,r,\rho}$. Since $M$ has the same central character as $E$,
it equals $\sgn^* M_{y',\sigma,r,\rho'}$ for some $y',\rho'$. By Theorem \ref{thm:2.5}.b 
$(y',\sigma,r)$ is open and $\rho'$ is trivial. By Theorem \ref{thm:1.8} 
$M_{y',\sigma,r,\rho'}$ is isomorphic to a submodule of $E_{y,\sigma,r,\rho}$, and 
that remains true if we apply $\sgn^*$ to both. We know from Proposition \ref{prop:2.2}.c
that $\sgn^* E_{y,\sigma,r,\rho}$ has at most one generic irreducible subquotient,
so $M$ must be a submodule of $E$.
\end{proof}

\section{Transfer to affine Hecke algebras}
\label{sec:AHA}

We will show how the representation theoretic results from the previous sections can be translated
to suitable affine Hecke algebras. This section is largely based on \cite{Lus-Gr,SolAHA,AMS3}.

Let $\mc R = (X,R,Y,R^\vee,\Delta)$ be a based root datum and let $W$ be the Weyl group of $R$.
Let $\lambda, \lambda^* : R \to \Z_{\geq 0}$ be $W$-invariant functions such that
$\lambda^* (\alpha) = \lambda (\alpha)$ whenever $\alpha^\vee \notin 2 Y$. Let $\mb q$ be an 
invertible indeterminate. To these data one can associate an affine Hecke algebra
\[
\mc H = \mc H (\mc R,\lambda,\lambda^*, \mb q),
\] 
as for instance in \cite{Lus-Gr,SolHecke}. The underlying vector space is $\C [X] \otimes \C[W] 
\otimes \C [\mb q, \mb q^{-1}]$ and the quadratic relation for a simple reflection $s_\alpha$ is
\[
(T_{s_\alpha} + 1) (T_{s_\alpha} - \mb q^{2 \lambda (\alpha)}) = 0 .
\]
Let $\Gamma$ be a finite group acting on $\mc R$ and on $T = \Hom (X,\C^\times)$. Assume 
that $\lambda$ and $\lambda^*$ are $\Gamma$-invariant and that $\alpha (\gamma (1_T)) = 1$ for
all $\gamma \in \Gamma, \alpha \in R$. For any 2-cocycle $\natural : \Gamma^2
\to \C^\times$ we can build the twisted affine Hecke algebra
\[
\mc H \rtimes \C[ \Gamma,\natural] = 
\mc H (\mc R,\lambda,\lambda^*, \mb q) \rtimes \C[ \Gamma,\natural] ,
\]
see \cite[Proposition 2.2]{AMS3}.
As a vector space it is the tensor product of its subalgebras $\mc H$ and $\C [\Gamma,\natural]$,
and for a standard basis element $T_\gamma$ of $\C [\Gamma,\natural]$ we have the cross relations
\[
T_\gamma T_w \theta_x T_\gamma^{-1} = T_{\gamma w \gamma^{-1}} \gamma (\theta_x)
\qquad w \in W, x \in X.
\]
We can specialize $\mb q$ to any $q \in \C^\times$, and then we obtain Hecke algebras denoted
\[
\mc H (\mc R,\lambda,\lambda^*, q) \quad \text{and} \quad 
\mc H (\mc R,\lambda,\lambda^*, q) \rtimes \C[ \Gamma,\natural] .
\]
In practice we will only specialize to $q \in \R_{>0}$.

When $\natural$ is trivial, \cite[\S 6 and (8.9)]{OpSo} provide a good notion of genericity
for $\mc H \rtimes \Gamma$-modules, as follows. The elements $T_{w\gamma}$ with $w \in W$ and
$\gamma \in \Gamma$ form a $\C[\mb q,\mb q^{-1}]$-basis of a subalgebra 
$\mc H (W,q^\lambda) \rtimes \Gamma$. The Steinberg representation of 
$\mc H (W,{\mb q}^\lambda) \rtimes \Gamma$ (with $\mb q$ specialized to some chosen $q \in 
\C^\times$) has dimension one and is defined by 
\begin{equation}\label{eq:3.3}
\mr{St}(T_{w\gamma}) = \det (w\gamma) .
\end{equation} 
Here det denotes the determinant of the action of $W\Gamma$ on $X$. We say that a 
$\mc H \rtimes \Gamma$-module $V$ is generic if $\mb q$ acts as multiplication by some 
$q \in \C^\times$ and $\Res^{\mc H \rtimes \Gamma}_{\mc H (W,{\mb q}^\lambda) \rtimes \Gamma} V$ 
contains St.

The centre of $\mc H \rtimes \C [\Gamma,\natural]$ contains 
\begin{equation}\label{eq:3.1}
\mc O (T)^{W\Gamma} \otimes \C[\mb q,\mb q^{-1}] \cong \mc O (T / W\Gamma \times \C^\times ).
\end{equation}
Often we will analyse representations of $\mc H \rtimes \C [\Gamma,\natural]$ via localization to
suitable subsets of $T / W\Gamma \times \C^\times$. That involves decomposing representations along
their weights for \eqref{eq:3.1}, which works well for finite length representations but does not
always apply to infinite dimensional representations. Therefore we will usually restrict our
attention to the category $\Mod_{\mr{fl}} (\mc H \rtimes \C [\Gamma,\natural])$ of finite length (or
equivalently finite dimensional) modules.

There is a two-step reduction procedure which assigns to $\mc H \rtimes \C[ \Gamma,\natural]$ 
a twisted graded Hecke algebra that governs a well-defined part of its representation theory.
A suitable family of such twisted graded Hecke algebras covers the entire category
$\Mod_{\mr{fl}} (\mc H \rtimes \C [\Gamma,\natural])$.

We write $\mf t_\R = R \otimes_\Z X_* (T)$ and $T_\R = \exp (\mf t_\R)$.
We fix a unitary element $u \in \Hom (X,S^1) \subset T$, and we want to study representations
whose $\mc O (T)^{W\Gamma}$-weights are close to $W \Gamma u T_\R$ in $T / W \Gamma$. 
There is a subroot system $R_u = \{\alpha \in R : s_\alpha (u) = u\}$, with a basis $\Delta_u$
determined by $\Delta$. These fit into a based root datum 
\[
\mc R_u = (X,R_u,Y,R_u^\vee,\Delta_u) .
\]
The group $(W\Gamma)_u$ decomposes as 
\[
(W\Gamma)_u = W(R_u) \rtimes \Gamma_u ,\qquad 
\Gamma_u = \{ \gamma \in (W \Gamma)_u : \gamma (\Delta_u) = \Delta_u \} .
\]
Let $\lambda_u, \lambda_u^*$ be the restrictions of $\lambda,\lambda^*$ to $R_u$ and let $\natural_u$
be the restriction of $\natural : (W\Gamma)^2 \to \Gamma^2 \to \C^\times$ to $\Gamma_u^2$. Altogether
these objects yield a new twisted affine Hecke algebra 
\[
\mc H_u \rtimes \C [\Gamma_u,\natural_u] = 
\mc H (\mc R_u, \lambda_u,\lambda_u^*,\mb q) \rtimes \C [\Gamma_u,\natural_u] ,
\]
a subalgebra of $\mc H \rtimes \C [\Gamma,\natural]$. An advantage is that $u$ is fixed by $W(R_u)$,
so $\alpha (u) \in \{\pm 1\}$ for all $\alpha \in R_u$. There is a $(W\Gamma)_u$-equivariant map
\[
\exp_u : \mf t \to T , \exp_u (\sigma) = u \exp (\sigma) .
\]
It is a local diffeomorphism around $\mf t_\R$ and restricts to a diffeomorphism 
$\mf t_\R \to u T_\R$.
Via this map we can pass from $\mc H_u \rtimes \C[\Gamma_u,\natural_u]$ to a twisted graded Hecke
\[
\mh H_u \rtimes \C[\Gamma_u,\natural_u] = \mh H (\mf t,(W \Gamma)_u,k_u,\mb r,\natural_u) .
\]
Here the parameter function $k_u : R_u \to \Z$ is given by
\begin{equation}\label{eq:3.2}
k_{u,\alpha} = (\lambda (\alpha) + \alpha (u) \lambda^* (\alpha) ) / 2.
\end{equation}
The next theorem was proven in \cite[Theorems 2.5, 2.11 and Proposition 2.7]{AMS3}, based on
similar results in \cite[\S 8--9]{Lus-Gr} and \cite[\S 2.1]{SolAHA}. The part about genericity
was checked in \cite[Theorems 6.1 and 6.2]{SolQS}.

\begin{thm}\label{thm:3.1}
The following three categories are canonically equivalent:
\begin{itemize}
\item finite dimensional $\mc H \rtimes \C [\Gamma,\natural]$-modules, all whose $\mc O (T /
W \Gamma \times \C^\times)$-weights belong to $W\Gamma u T_\R \times \R_{>0}$,
\item finite dimensional $\mc H_u \rtimes \C [\Gamma_u,\natural_u]$-modules, all whose 
$\mc O (T / (W \Gamma_u \times \C^\times)$-weights belong to $u T_\R \times \R_{>0}$,
\item finite dimensional $\mh H_u \rtimes \C [\Gamma_u,\natural_u]$-modules, all whose 
$\mc O (\mf  t / (W \Gamma)_u \times \C)$-weights belong to $\mf t_\R \times \R$.
\end{itemize}
The equivalences have the following features:
\begin{enumerate}[(i)]
\item They are compatible with parabolic induction and parabolic restriction.
\item They respect temperedness.
\item They respect essentially discrete series when $\mr{rk}(R_u) = \mr{rk}(R)$, and otherwise
the involved category of $\mc H \rtimes \C [\Gamma,\natural]$-modules does not contain essentially 
discrete series representations.
\item They respect genericity whenever $\natural$ is trivial.
\item Any $\mc O (\mf t \oplus \C)$-weight of a $\mh H_u \rtimes \C [\Gamma_u,\natural_u]$-module
is transformed into a\\ $\mc O (T \times \C^\times)$-weight $(\exp_u (\sigma), \exp (r))$ for
$\mc H_u \rtimes \C[\Gamma_u,\natural_u]$ and into a collection of $\mc O (T \rtimes \C^\times)
$-weights $(w \exp_u (\sigma),\exp (r))$ for $\mc H \rtimes \C [\Gamma,\natural]$, where $w$ 
runs through a certain set of representatives for $W \Gamma / (W\Gamma)_u$.
\end{enumerate}
\end{thm}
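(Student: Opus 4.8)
The plan is to construct the two equivalences in turn and then read off the five features, assembling and lightly extending the constructions of \cite{Lus-Gr,SolAHA,AMS3} together with the genericity computations of \cite{SolQS}. For the first equivalence I would argue by localization: the centre of $\mc H \rtimes \C[\Gamma,\natural]$ contains $\mc O(T/W\Gamma \times \C^\times)$, and a finite dimensional module all of whose $\mc O(T/W\Gamma \times \C^\times)$-weights lie in $W\Gamma u T_\R \times \R_{>0}$ is the same datum as a module over the completion of $\mc H \rtimes \C[\Gamma,\natural]$ along the ideal cut out by that subset. The key input is that this completion is Morita equivalent, via an explicit idempotent, to the analogous completion of the subalgebra $\mc H_u \rtimes \C[\Gamma_u,\natural_u]$. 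For the untwisted affine Hecke algebra this is \cite[\S 8]{Lus-Gr}; the twisted statement is \cite[\S 2.1]{SolAHA} and \cite[Theorem 2.5]{AMS3}, which I would invoke. The explicit form of the idempotent makes the functor concrete, which is exactly what feature (v) records on the level of weights, together with the bookkeeping of coset representatives $w \in W\Gamma/(W\Gamma)_u$.

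For the second equivalence I would pass to formal completions of the two centres along the ideals determined by $u T_\R$ and by $\mf t_\R$ respectively, and use the $(W\Gamma)_u$-equivariant diffeomorphism $\exp_u : \mf t_\R \to u T_\R$ to identify the commutative parts. The nontrivial point is to check that the quadratic and Bernstein--Lusztig relations of $\mc H_u \rtimes \C[\Gamma_u,\natural_u]$ go over to the defining relations of $\mh H_u \rtimes \C[\Gamma_u,\natural_u]$, with the parameter function produced exactly as in \eqref{eq:3.2}; the sign $\alpha(u) \in \{\pm 1\}$ enters at this stage and accounts for the shape of \eqref{eq:3.2}. This is Lusztig's comparison from \cite[\S 9]{Lus-Gr}, made twist-compatible in \cite[\S 2.1]{SolAHA} and \cite[Theorem 2.11, Proposition 2.7]{AMS3}. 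The finite dimensionality hypothesis is used precisely to move between modules supported (in the weight sense) near $u T_\R$ and modules over the completed algebra; for infinite dimensional modules the weight decomposition need not behave.

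It then remains to verify the features for the composed equivalence. Feature (i) holds because both steps are implemented by idempotents, respectively completions, that are visibly compatible with the inclusions of parabolic subalgebras on both sides, and this compatibility is recorded in the cited references. Features (ii) and (iii) are read off from feature (v): temperedness and essential discreteness are conditions on the location of the $\mc O(T)$-, respectively $\mc O(\mf t)$-weights relative to the closed, respectively open, obtuse negative cone attached to the positive roots, and since $\exp_u$ matches $\mf t_\R$ with $u T_\R$ and the reduction $R \leadsto R_u$ does not push weights across the relevant cone walls, these conditions are preserved; the rank equality $\mathrm{rk}(R_u) = \mathrm{rk}(R)$ is exactly the condition for the central directions of the root datum to survive the reduction, which is why it governs (iii). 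Feature (iv) requires tracking the one-dimensional Steinberg representation \eqref{eq:3.3} of $\mc H(W,\mb q^\lambda) \rtimes \Gamma$ through both reductions and identifying its image with the determinant representation of $\C[(W\Gamma)_u]$; for this I would simply invoke \cite[Theorems 6.1 and 6.2]{SolQS}.

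I expect feature (iv) to be the main obstacle: the Steinberg representation does not restrict transparently through the first, localization, step, and one must verify that the signs picked up from the coset representatives $w \in W\Gamma/(W\Gamma)_u$ of feature (v) combine correctly with the determinant of $(W\Gamma)_u$ on $X$. Everything else amounts to lining up constructions already present in the literature, so the bulk of the work is careful bookkeeping rather than new input.
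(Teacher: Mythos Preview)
Your proposal is correct and follows the same approach as the paper: the paper does not give an independent proof but simply cites \cite[Theorems 2.5, 2.11 and Proposition 2.7]{AMS3}, \cite[\S 8--9]{Lus-Gr} and \cite[\S 2.1]{SolAHA} for the equivalences and features (i)--(iii), (v), and \cite[Theorems 6.1 and 6.2]{SolQS} for feature (iv), exactly as you do. Your outline of what actually happens inside those references (localization along the centre, the $\exp_u$ comparison, reading off temperedness from the weight transformation) is accurate and, if anything, more informative than the paper's bare citation.
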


Langlands standard modules for twisted affine Hecke algebras can be defined like for twisted graded 
Hecke algebras, see \eqref{eq:1.25}. This provides satisfactory
collections of standard modules in each of the three categories in Theorem \ref{thm:3.1}.
In each case they are in bijection with the irreducible modules in that category, via taking
irreducible quotients of standard modules.

\begin{lem}\label{lem:3.2}
The equivalences of categories in Theorem \ref{thm:3.1} restrict to bijections between the three
sets of Langlands standard modules.
\end{lem}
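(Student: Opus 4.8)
The plan is to prove that each of the two equivalences in Theorem~\ref{thm:3.1} carries Langlands standard modules into the set of Langlands standard modules; once this is established, the fact recalled just before the lemma---that in each of the three categories the Langlands standard modules are in bijection with the irreducible modules via passage to the irreducible quotient---combined with the observation that an equivalence of categories preserves irreducibility and the formation of (irreducible) quotients, forces the restriction to be a \emph{bijection} between the three sets of Langlands standard modules. Indeed, if $W$ is a Langlands standard module on the target side with irreducible quotient $L$, then $L$ is the image of a unique irreducible $L'$, which is the irreducible quotient of a unique Langlands standard $V'$; the image of $V'$ is then Langlands standard with irreducible quotient $L$, hence equal to $W$. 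So the entire content reduces to: the image of a Langlands standard module is again Langlands standard.

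To prove this I would unwind the construction of a Langlands standard module, as in \eqref{eq:1.25} and the surrounding discussion, in the setting of a twisted affine (resp.\ twisted graded) Hecke algebra: such a module is $\ind(\tau' \otimes t)$, built from a standard parabolic subalgebra, an irreducible tempered module $\tau$ of it, a character (resp.\ weight) $t$ in positive position relative to the parabolic, the stabilizer $\Gamma_{P,t}$ of $(P,t)$, and an irreducible module $\tau'$ of the $\Gamma_{P,t}$-extended parabolic whose restriction contains $\tau$. I would check four points. First, parabolic subalgebras correspond: by the construction of the equivalences in \cite{Lus-Gr,SolAHA,AMS3}, a (quasi-)parabolic subalgebra of $\mc H \rtimes \C[\Gamma,\natural]$ is matched (after restricting the root system to $R_u$ and the finite group to its $u$-fixed part) with a (quasi-)parabolic subalgebra of $\mh H_u \rtimes \C[\Gamma_u,\natural_u]$, compatibly with the inclusions into the ambient algebras, so that by feature~(i) the equivalences restrict to equivalences between the module categories of corresponding parabolic subalgebras. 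Second, by feature~(ii), applied inside the parabolic, the tempered input $\tau$ is sent to a tempered module. Third, by feature~(v) the $\mc O(T \times \C^\times)$-weights of the image are the images under $\exp_u$ of the graded weights, and since positivity of $t$ relative to $P$ is governed by the magnitudes $|\alpha(t)|$ for $\alpha$ ranging over the simple roots not in $P$, while $|\alpha(\exp_u(\sigma))| = e^{\langle \alpha, \Re \sigma\rangle}$ for $\alpha \in R_u$, the positivity cone on $\mf t$ relative to the parabolic is carried onto the positivity cone on $T$ relative to $P$. Fourth, the Clifford-theoretic bookkeeping---the groups $\Gamma_{P,t}$ and $\Gamma_{P,t,\tau}$, the restricted $2$-cocycles, and the induction from the intermediate $\Gamma_{P,t}$-extended parabolic---is formulated purely in terms of the $W\Gamma$-action, resp.\ the $(W\Gamma)_u$-action, on the torus, resp.\ on $\mf t$, and on the set of parabolic subalgebras, and these structures correspond under the equivalences; since the equivalences also commute with induction from intermediate subalgebras, the whole multi-step construction of $\ind(\tau' \otimes t)$ is transported verbatim, and the image is Langlands standard.

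For the second equivalence---between $\mc H_u \rtimes \C[\Gamma_u,\natural_u]$ and the twisted graded Hecke algebra $\mh H_u \rtimes \C[\Gamma_u,\natural_u]$---the four points are transparent: this is the Lusztig-type equivalence relating modules of $\mc H_u$ near $u$ with modules of $\mh H_u$ near $0$ through $\exp_u$, under which parabolic subalgebras, temperedness, and the positivity cones correspond visibly, and the finite group $\Gamma_u$ together with its $2$-cocycle is literally unchanged. For the first equivalence the finite group genuinely shrinks from $\Gamma$ to $\Gamma_u$, and the point requiring care is that the Langlands data for $\mc H \rtimes \C[\Gamma,\natural]$ producing modules in the weight regime near $W\Gamma u T_\R$ are exactly those whose parabolic and inducing datum are ``supported near $u$'', which are matched with the $\Gamma_u$-data for $\mc H_u \rtimes \C[\Gamma_u,\natural_u]$. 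This matching of parabolic-plus-finite-group data is the only step that genuinely requires working inside the reduction procedure of \cite{Lus-Gr,SolAHA,AMS3} rather than invoking the black-box features~(i)--(v), and it is where I expect the main (essentially bookkeeping) difficulty to lie.
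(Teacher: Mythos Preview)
Your proposal is correct and follows essentially the same approach as the paper: show that each equivalence sends Langlands standard modules to Langlands standard modules, then invoke the bijection with irreducibles on both sides to upgrade this to a bijection. The paper's proof is slightly more concrete at exactly the point you flag as the main difficulty: rather than leaving the first equivalence as ``bookkeeping inside the reduction procedure,'' it isolates the specific issue---that a positive character $t$ fixed by $\Gamma_{P,t}$ may still be moved by the larger $\Gamma_P$ when passing from $\mc H_u$ to $\mc H$---and writes down explicitly the induced representations on both sides to verify they match.
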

\begin{proof}
Theorem \ref{thm:3.1} respects almost all the operations and properties involved in (Langlands)
standard modules, the only potential issue being the weights in part (v). Between $\mh H_u \rtimes
\C [\Gamma_u,\natural_u]$ and $\mc H_u \rtimes \C [\Gamma_u,\natural_u]$, Theorem \ref{thm:3.1}
induces a bijection on weights, so the equivalence of categories provides a bijection between
the respective sets of standard modules.

It may seem that Theorem \ref{thm:3.1} does not necessarily match those two sets with standard
modules for $\mc H \rtimes \C [\Gamma,\natural]$. The problem lies in part (v) at the level
of parabolic subalgebras (associated to a set of simple roots $P$), which entails that a positive 
character $t$ for $\mc H_u^P \rtimes \C [\gamma_{P,u},\natural_u]$ may be moved by $\Gamma_P$ 
even if it is fixed by $\Gamma_{P,t}$. In such a situation the essentially
tempered irreducible representation $\tau \otimes t$ of is sent by Theorem \ref{thm:3.1} to
\[
\ind_{\mh H^P \rtimes \C [\Gamma_{P,t},\natural]}^{\mc H^P \rtimes \C[\Gamma_P,\natural]} 
(\tau' \otimes t) ,
\]
where $\tau'$ is the image of $\tau$ via Theorem \ref{thm:3.1} for the appropriate subalgebras.
The standard $\mc H \rtimes \C[\Gamma,\natural]$-module associated to $(P,\tau',t)$ is 
\[
\ind_{\mh H^P \rtimes \C [\Gamma_{P,t},\natural]}^{\mc H \rtimes \C[\Gamma,\natural]} 
(\tau' \otimes t) .
\]
In Theorem \ref{thm:3.1} this is matched with the standard $\mc H_u \rtimes 
\C[\Gamma_u,\natural_u]$-module
\[
\ind_{\mh H_u^P \rtimes \C [\Gamma_{P,u},\natural_u]}^{\mc H_u \rtimes \C[\Gamma_u,\natural_u]} 
(\tau \otimes t) .
\]
Thus Theorem \ref{thm:3.1} sends standard modules for $\mc H_u \rtimes \C[\Gamma_u,\natural_u]$
to standard modules for $\mc H \rtimes \C[\Gamma,\natural]$. Since we have an equivalence of
categories and on both sides the standard modules are canonically in bijection with the
irreducible modules, the equivalence is also bijective on standard modules.
\end{proof}

There are always classifications of irreducible $\mc H \rtimes \C [\Gamma,\natural]$-modules, see
\cite{SolHecke}, but in general these do not involve parameters like Langlands parameters for
reductive $p$-adic groups. To get the geometry from Sections \ref{sec:setup}--\ref{sec:generic}
into play, we need fairly specific parameter functions $\lambda,\lambda^*$, and the 2-cocycle
$\natural$ cannot be arbitrary either. Some twisted affine Hecke algebras that can be analysed
geometrically feature in \cite[\S 2]{AMS3}, they are based on reductive complex groups and cuspidal
local systems like our graded Hecke algebras. 

But the class of twisted affine Hecke algebras to which Sections 
\ref{sec:setup}--\ref{sec:open} can be applied is larger, we only need that for every 
fixed $u$ Theorem \ref{thm:3.1} yields a geometric graded Hecke algebra. From now on
we are mainly interested in genericity, and therefore we restrict to trivial $\natural$.
To transfer the results about submodules of standard modules, we impose the
following conditions.

\begin{cond}\label{cond:3.3}
The twisted affine Hecke algebra $\mc H (\mc R,\lambda,\lambda^*,\mb q) \rtimes \Gamma$ 
is such that, for each twisted graded Hecke algebra 
\[
\mh H (\mf t,(W\Gamma)_u ,k_u, \mb r ) = 
\mh H_u \rtimes \Gamma_u
\]
involved in Theorem \ref{thm:3.1} for some unitary element $u \in \Hom (X,S^1) \subset T$,
there are data $(G_u,M_u,q\cE_u,\cE_u)$ like in Section \ref{sec:setup} with $\cE_u$ trivial,  
and a Lie group isomorphism $\mf t \rtimes W_{q\cE_u} \cong \mr{Lie}(T_u) \rtimes (W\Gamma)_u$ 
which induces an algebra isomorphism $\mh H_u \cong \mh H (G_u^\circ,M_u^\circ,\cE_u)$.
\end{cond}

In Condition \ref{cond:3.3} we could take $G_u$ of the form $G_u^\circ \rtimes \Gamma_u$
where the action of $\Gamma_u$ on $G_u^\circ$ preserves a pinning.

\begin{thm}\label{thm:3.4}
Let $\mc H \rtimes \Gamma$ be an extended affine Hecke algebra satisfying 
Condition \ref{cond:3.3}. Let $E$ be a Langlands standard $\mc H \rtimes \Gamma
$-module on which $\mb q$ acts as multiplication by $q \in \R_{>1}$ and let $V$ be an
irreducible generic subquotient of $E$. Then $V$ is isomorphic to a submodule of $E$. 
\end{thm}
\begin{proof}
Since $E$ is standard, it admits a central character, say $(W\Gamma t,q)$. Put $u = t |t|^{-1} 
\in \Hom (X,S^1)$, so that $t \in u T_\R$. By Theorem \ref{thm:3.1}, the category 
$\Modf{W\Gamma t,q}(\mc H)$ is equivalent with the category 
\[
\Modf{(W\Gamma)_u |t|, \log q} (\mh H_u \rtimes \Gamma_u), 
\]
where $\mh H_u = \mh H (G_u,M_u,q\cE_u)$ by Condition \ref{cond:3.3}. By Lemma \ref{lem:3.2}
$E$ corresponds to a Langlands standard module $E_u$ of $\mh H_u \rtimes \Gamma_u$,
on which $\mb r$ acts as $\log q \in \R$. By \eqref{eq:1.26}, $E_u$ is a direct summand of 
$\ind_{\mh H_u}^{\mh H_u \rtimes \Gamma_u} E_u^\circ$ for some Langlands standard
$\mh H_u$-module $E_u^\circ$. More concretely, the steps from \eqref{eq:1.25} to \eqref{eq:1.26} 
show that
\begin{equation}\label{eq:3.4}
E_u = \ind_{\mh H_u \rtimes \Gamma'_u}^{\mh H_u \rtimes \Gamma_u} (\rho_u \otimes E_u^\circ),
\end{equation}
where $\Gamma'_u$ is the stabilizer of $E_u^\circ$ in $\Gamma_u$ and $\rho_u$ is an irreducible
representation of $\Gamma'_u$. By Proposition \ref{prop:1.13} $E_u^\circ$ is analytic standard.

Via Theorem \ref{thm:3.1}, $V$ corresponds to an irreducible 
subquotient $V_u$ of $E_u$. By Clifford theory, see for instance \cite[Appendix]{RaRa}, 
\cite[\S 11]{SolGHA} and \cite[\S 1]{AMS3},
$\Res^{\mh H_u \rtimes \Gamma_u}_{\mh H_u} V_u$ is completely reducible, and all
its irreducible summands are in one $\Gamma_u$-orbit. Via a composition series of $\rho_u \otimes 
E_u^\circ$ we see that $V_u$ arises from a subquotient of that, unique up to $\Gamma_u$.
It follows that $V_u$ contains an irreducible subquotient of $E_u^\circ$, say $V_u^\circ$, 
and is generated by $V_u^\circ$ as $\C [\Gamma_u]$-module. 

Clifford theory tells us that $V_u$ is a direct summand of $\ind_{\mh H_u}^{\mh H_u \rtimes 
\Gamma_u} V_u^\circ$. As $V_u$ is generic, \cite[(8.13)]{OpSo} says that
$V_u = \det \ltimes V_u^\circ$ and $V_u^\circ$ is generic. Thus 
$V_u^\circ$ has the same property as supposed for $V_u$. Now Corollary \ref{cor:2.7} proves 
the theorem for the subquotient $V_u^\circ$ of $E_u^\circ$.

Let soc$(E_u)$ denote the socle of $E_u$, that is, the sum of all irreducible submodules. Since
$\ind_{\mh H_u}^{\mh H_u \rtimes \Gamma_u}$ preserves completely reducibility
\cite[Theorem 11.2]{SolGHA} and $E_u$ is a direct summand of $\ind_{\mh H_u}^{\mh H_u \rtimes 
\Gamma_u} E_u^\circ$:
\[
\mr{soc}(E_u) = \C [\Gamma_u] \cdot \mr{soc}(E_u^\circ) .
\]
We already saw that $V_u^\circ$ is an irreducible submodule of $E_u^\circ$ which generates 
$V_u$, so $V_u \subset \C [\Gamma_u] \cdot \mr{soc}(E_u^\circ)$. Thus $V_u \subset 
\mr{soc}(E_u)$, which means that it is isomorphic to a submodule. We can go back to 
$\mc H \rtimes \Gamma$-modules via Theorem \ref{thm:3.1}, from which we conclude that $V$ 
is isomorphic to a submodule of $E$. 
\end{proof}

\section{Transfer to reductive $p$-adic groups}
\label{sec:padic}

Let $F$ be a non-archimedean local field and let $\mc G$ a connected reductive $F$-group. 
We will call $\mc G (F)$ a reductive $p$-adic group, although char$(F) > 0$ is allowed. 
We warn that $\mc G$ is not related to $G$ from Section \ref{sec:setup}.

We are interested in smooth complex representations of $\mc G (F)$, which form a ca\-te\-gory
$\Rep (\mc G (F))$. Let $\Rep (\mc G (F))^{\mf s}$ be a Bernstein block in there, coming 
from a unitary supercuspidal representation $\omega$ of a Levi subgroup 
$\mc M (F) \subset \mc G (F)$.

It is well-known that in many cases $\Rep (\mc G (F))^{\mf s}$ is closely related to the module
category of a (twisted) affine Hecke algebra. At the same time, it is known from \cite{SolEnd}
that one can increase the generality of such comparison results by using graded instead of
affine Hecke algebras.

Let $X_\nr (\mc M (F))$ be the Lie group of unramified characters of $\mc M (F)$, and let 
$X_\nr^+ (\mc M (F))$ be the subgroup $\Hom (\mc M (F), \R_{>0})$. Recall that 
$\Rep (\mc G (F))^{\mf s}$ consists of all smooth $\mc G (F)$-representations $\pi$ such that 
every irreducible subquotient of $\pi$ has cuspidal support in 
$(\mc M (F), X_\nr (\mc M (F)) \omega)$
up to $\mc G (F)$-conjugacy. Let $W_{\mf s}$ be the finite group associated 
to $\mf s = [\mc M (F),\omega]$ by Bernstein, and let $W_{\mf s,\omega}$ be the subgroup that
stabilizes $\omega$. Let $\mf t$ be the Lie algebra of $X_\nr (\mc M (F))$, identified with the
tangent space to $X_\nr (\mc M (F)) \omega$ at $\omega$. Since $W_{\mf s}$ operates faithfully on 
$X_\nr (\mc M (F)) \omega$ \cite[\S 2.16]{BeDe}, $W_{\mf s,\omega}$ acts faithfully on $\mf t$.

We define a root system $R_\omega$ as in \cite[\S 6.1]{SolEnd}, where it is called 
$\Sigma_{\sigma \otimes u}$. Parameters $k^\omega$ and a 2-cocycle $\natural_\omega$ of 
$\Gamma_\omega \cong W_{\mf s,\omega} / W(R_\omega)$ (denoted $\natural_u^{-1}$ in \cite{SolEnd}) 
are constructed in \cite[\S 7]{SolEnd}.

\begin{thm}\label{thm:4.1} \textup{\cite[Theorems B and C and (8.2)]{SolEnd}} \\
There exists an equivalence between the following categories:
\begin{itemize}
\item finite length smooth $\mc G (F)$-representations $\pi$, such that all irreducible
subquotients of $\pi$ have cuspidal support in $(\mc M (F),X_\nr^+ (\mc M (F)) \omega)$
up to $\mc G (F)$-conjugacy,
\item finite dimensional modules of the twisted graded Hecke algebra
\[
\mh H (\mf t,W_{\mf s,\omega},k^\omega,\natural_\omega) = 
\mh H (\mf t,W (R_\omega),k^\omega) \rtimes \C [\Gamma_\omega,\natural_\omega],
\]
all whose $\mc O (\mf t)$-weights belong to $\mf t_\R = \mr{Lie}\big(X_\nr^+ (\mc M (F)) \big)$.
\end{itemize}
This equivalence is canonical up to the choice of the 2-cocycle $\natural_\omega$, and it has
the following properties:
\begin{enumerate}[(i)]
\item compatibility with normalized parabolic induction and restriction,
\item respects temperedness,
\item sends essentially square-integrable $\mc G (F)$-representations to essentially discrete
series $\mh H (\mf t,W_{\mf s,\omega},k^\omega,\natural_\omega)$-modules 
(but not always conversely),
\item compatibility for twisting $\mc G (F)$-representations by elements of $X_\nr^+ (\mc M (F))$ 
and $\mh H (\mf t,W_{\mf s,\omega},k^\omega,\natural_\omega)$-modules by elements of
$\mr{Lie} \big( X_\nr^+ (\mc G (F)) \big) \subset \mf t_\R$.
\end{enumerate}
\end{thm}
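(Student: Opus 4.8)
The plan is to obtain this by assembling the results of \cite{SolEnd}, so the substantive work is to see that the two categories correspond and that the four listed properties survive the identification. First I would recall the structure of that paper: one fixes a suitable progenerator $\Pi_{\mf s}$ of $\Rep (\mc G (F))^{\mf s}$, so that $\Rep (\mc G (F))^{\mf s} \cong \Mod \big( \End_{\mc G (F)}(\Pi_{\mf s}) \big)$, and then identifies $\End_{\mc G (F)}(\Pi_{\mf s})$ --- up to a mild technical enlargement in the most general case --- with an algebra built from a root datum whose torus is $X_\nr (\mc M (F))$ and whose ``finite part'' is governed by the self-intertwining operators among the $\omega$-twists. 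Passing to finite length representations whose cuspidal support lies over $X_\nr^+ (\mc M (F)) \omega$ isolates the ``positive real'' slice of the Bernstein variety $X_\nr (\mc M (F)) \omega / W_{\mf s}$ through $\omega$. On that slice the relevant ``unitary part'' is trivial, so the localization-and-degeneration procedure (in the guise already present in \cite{SolEnd}) replaces the affine-type algebra by the twisted graded Hecke algebra $\mh H (\mf t, W_{\mf s,\omega}, k^\omega, \natural_\omega)$, with $\mf t = \mr{Lie}(X_\nr (\mc M (F)))$. The exponential map restricts to a diffeomorphism $\mf t_\R \xrightarrow{\sim} X_\nr^+ (\mc M (F))$, and this is exactly what converts the weight condition ``all $\mc O (\mf t)$-weights in $\mf t_\R$'' into ``cuspidal support over $X_\nr^+ (\mc M (F)) \omega$''; this is (8.2) of \cite{SolEnd}.

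Next I would transport the four properties. Property (i) is built into the construction, since the progenerators can be chosen compatibly for $\mc G$ and its Levi subgroups, and the parabolic subalgebras of $\mh H (\mf t, W_{\mf s,\omega}, k^\omega, \natural_\omega)$ correspond to those Levi subgroups. For (ii) one notes that temperedness is detected on both sides by a Casselman-type criterion --- the exponents, respectively the $\mc O (\mf t)$-weights, lying in the appropriate closed antidominant cone --- and the equivalence matches exponents with $\mc O (\mf t)$-weights. For (iii), essential square-integrability corresponds to weights in the open antidominant cone, so the forward implication is immediate; the converse can genuinely fail because restricting to $X_\nr^+ (\mc M (F))$ can lower the rank, so a discrete series $\mh H (\mf t, W_{\mf s,\omega}, k^\omega, \natural_\omega)$-module need not come from a square-integrable $\mc G (F)$-representation. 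Property (iv) is the statement that twisting a representation by $\chi \in X_\nr^+ (\mc M (F))$ corresponds under $\exp$ to translating all $\mc O (\mf t)$-weights by $\log \chi \in \mf t_\R$, which is again part of the bookkeeping in \cite{SolEnd}.

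The main obstacle is precisely the ``mild technical enlargement'': in full generality \cite{SolEnd} does not always deliver an honest twisted affine Hecke algebra but a somewhat larger algebra with equivalent representation theory, and one must check that the degeneration step still yields exactly $\mh H (\mf t, W_{\mf s,\omega}, k^\omega, \natural_\omega)$ with the root system $R_\omega$ of \cite[\S 6.1]{SolEnd}, the parameters $k^\omega$ of \cite[\S 7]{SolEnd}, and the $2$-cocycle $\natural_\omega$ --- the explicit identification of $k^\omega$ and $\natural_\omega$ being the genuinely delicate point. Once the dictionary between exponents and $\mc O (\mf t)$-weights is fixed, the rest is formal, and since all of this has been carried out in \cite[Theorems B, C and (8.2)]{SolEnd}, the statement follows.
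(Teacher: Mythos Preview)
Your proposal is correct and matches the paper's approach: the paper does not give an independent proof but simply cites \cite[Theorems B and C and (8.2)]{SolEnd}, and your sketch accurately reconstructs how those results assemble into the stated equivalence together with properties (i)--(iv). Your identification of the ``mild technical enlargement'' as the main subtlety is also on point, since this is precisely why \cite{SolEnd} works directly with graded Hecke algebras rather than passing through an affine Hecke algebra in full generality.
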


Notice that there is no $\mb r$ in the graded Hecke algebras in Theorem \ref{thm:4.1}. 
They relate to Sections \ref{sec:setup}--\ref{sec:generic} by specializing $\mb r$ 
at some $r > 0$.

Let $\mc L(F) \subset \mc G (F)$ be a Levi subgroup containing $\mc M (F)$ and let $\tau \in
\Irr (\mc L (F))$ be a tempered representation with cuspidal support in $(\mc M (F),
X_\nr^+ (\mc M (F))\omega )$. Let $\chi \in X_\nr^+ (\mc L (F))$ be in positive position with
respect to a parabolic subgroup $\mc P (F) \subset \mc G (F)$ with Levi factor $\mc L (F)$. Then
$I_{\mc P (F)}^{\mc G (F)} (\tau \otimes \chi)$ is a standard representation as in the Langlands
classification for $\mc G (F)$. Moreover every standard representation $\mc G (F)$-representation 
with cuspidal support in $(\mc M (F),X_\nr^+ (\mc M (F))\omega)$ (up to $\mc G (F)$-conjugation) 
is of this form.

\begin{lem}\label{lem:4.2}
The equivalence in Theorem \ref{thm:4.1} restricts to a bijection between the sets of 
Langlands standard representations in both categories.
\end{lem}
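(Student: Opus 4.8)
The plan is to reproduce the argument of Lemma~\ref{lem:3.2}, with Theorem~\ref{thm:4.1} in the role of Theorem~\ref{thm:3.1}. The structural reason this works is that on each of the two sides of Theorem~\ref{thm:4.1} the Langlands standard objects are in canonical bijection with the irreducible objects, via passage to the (unique) irreducible quotient. Since the equivalence $\mc F$ of Theorem~\ref{thm:4.1} is exact and identifies irreducible objects, it automatically commutes with this cosocle operation: every quotient of $\mc F(E)$ is $\mc F$ of a quotient of $E$, so $\mc F(E)$ has a unique irreducible quotient, namely $\mc F$ of the unique irreducible quotient of $E$. Hence it suffices to prove the single assertion that $\mc F$ carries every Langlands standard $\mc G(F)$-representation lying in the source category to a Langlands standard $\mh H(\mf t,W_{\mf s,\omega},k^\omega,\natural_\omega)$-module in the sense of \eqref{eq:1.25}; the claimed bijection between the two sets of standard objects then follows formally from bijectivity of $\mc F$ on the irreducibles.

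To establish this assertion I would start from a Langlands standard representation $E = I_{\mc P(F)}^{\mc G(F)}(\tau \otimes \chi)$, where $\mc L(F)$ is the Levi factor of $\mc P(F)$, $\tau \in \Irr(\mc L(F))$ is tempered with cuspidal support in $(\mc M(F),X_\nr^+(\mc M(F))\omega)$, and $\chi \in X_\nr^+(\mc L(F))$ is in positive position with respect to $\mc P(F)$; every standard representation in the source category has this shape, and $\tau \otimes \chi$ does lie in the prescribed Bernstein component. By property~(i) of Theorem~\ref{thm:4.1}, $\mc F(E)$ is the parabolic induction of $\mc F_{\mc L}(\tau \otimes \chi)$ from the parabolic subalgebra of $\mh H(\mf t,W_{\mf s,\omega},k^\omega,\natural_\omega)$ attached to $\mc L(F)$, where $\mc F_{\mc L}$ denotes the instance of Theorem~\ref{thm:4.1} for the block $[\mc M(F),\omega]$ of $\mc L(F)$. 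Property~(ii) makes $\mc F_{\mc L}(\tau)$ tempered, and (by the construction of \cite{SolEnd}, together with property~(iv)) the twist by $\chi$ becomes a twist of $\mc F_{\mc L}(\tau)$ by the element $t \in \mf t_\R$ matching $\chi$, in positive position relative to the simple roots determined by $\mc L(F)$. Thus $\mc F(E)$ is an induced module of exactly the type that feeds into the Langlands construction \eqref{eq:1.25}.

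\textbf{The main obstacle} is the twisting-group subtlety already met in the proof of Lemma~\ref{lem:3.2}. The parabolic subalgebra attached to $\mc L(F)$ has the form $\mh H^P \rtimes \C[\Gamma_P,\natural_\omega]$, and $\Gamma_P$ may be strictly larger than the stabilizer $\Gamma_{P,t}$ of the positive character $t$; to reach the precise normal form \eqref{eq:1.25} one must pass from $\mh H^P \rtimes \C[\Gamma_P,\natural_\omega]$ to $\mh H^P \rtimes \C[\Gamma_{P,t},\natural_\omega]$ and replace $\mc F_{\mc L}(\tau)$ by a suitable irreducible summand $\tau'$ of $\ind_{\mh H^P}^{\mh H^P \rtimes \C[\Gamma_{P,t},\natural_\omega]} \mc F_{\mc L}(\tau)$, checking that the ambient induced module is unchanged and equals the Langlands standard module attached to $(Q,\tau',t)$. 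This is a Clifford-theory bookkeeping identical in structure to the one in Lemma~\ref{lem:3.2}, carried out with the formalism of \cite[\S 11]{SolGHA} and \cite[\S 1]{AMS3}. I expect this, together with the verification that ``positive position with respect to $\mc P(F)$'' matches ``positive position relative to $P$'' under the parametrization of \cite{SolEnd} (where property~(iv) is used), to be the only genuine work; it also shows in passing that $\mc F(E)$ has $\mc O(\mf t)$-weights in $\mf t_\R$, so that it indeed lies in the target category of Theorem~\ref{thm:4.1}.

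Finally, having shown that $\mc F$ maps the $p$-adic Langlands standard representations into the Hecke-algebra Langlands standard modules, I would conclude as in the first paragraph: this map on standard objects is injective because $\mc F$ is, and surjective because a Langlands standard $\mh H(\mf t,W_{\mf s,\omega},k^\omega,\natural_\omega)$-module is determined by its irreducible quotient, which corresponds under $\mc F$ to the irreducible quotient of the unique $p$-adic Langlands standard representation covering $\mc F^{-1}$ of that quotient. That proves the lemma.
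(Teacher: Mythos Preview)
Your overall strategy is the paper's: show that the equivalence carries $p$-adic Langlands standard representations to Langlands standard $\mh H(\mf t,W_{\mf s,\omega},k^\omega,\natural_\omega)$-modules, then use the canonical bijections with irreducibles on both sides to upgrade this to a bijection.

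Where you diverge from the paper is in the identification of the ``main obstacle''. The Clifford-theory bookkeeping from Lemma~\ref{lem:3.2} concerning $\Gamma_P$ versus $\Gamma_{P,t}$ simply does not arise here. The character $\chi$ lies in $X_\nr^+ (\mc L(F))$, so it is central in $\mc L(F)$; hence $\log(\chi)$ is fixed by the entire group $W_{\mf s,\omega}^{\mc L}$ attached to the parabolic subalgebra coming from $\mc L(F)$. In your notation $\Gamma_{P,t} = \Gamma_P$ automatically, and the passage to the normal form \eqref{eq:1.25} is immediate. The paper notes this in one line and moves on.

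The genuine point, which you touch on only in passing, is the positivity check. The root system $R_\omega$ is built from scalar multiples of the roots of $Z^\circ(\mc M)(F)$ in $\mr{Lie}(\mc G(F))$, but depending on $\omega$ some of those roots may be omitted. Thus the positive-position requirement on $\log(\chi)$ for the Hecke algebra is in general \emph{weaker} than the positive-position requirement on $\chi$ for $\mc G(F)$: the implication runs only from the $p$-adic side to the Hecke side, not back. This asymmetry is exactly why one cannot argue ``standard $\to$ standard'' in both directions and why the bijection-with-irreducibles step at the end is not merely cosmetic.
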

\begin{proof}
Let $\mh H (\mf t,W_{\mf s,\omega}^{\mc L},k^\omega,\natural_\omega)$ be the parabolic subalgebra
of $\mh H (\mf t,W_{\mf s,\omega},k^\omega,\natural_\omega)$ determined by $\mc L (F)$. The
properties in Theorem \ref{thm:4.1} imply that $I_{\mc P (F)}^{\mc G (F)} (\tau \otimes \chi)$
is matched with 
\begin{equation}\label{eq:4.1}
\ind_{\mh H (\mf t,W_{\mf s,\omega}^{\mc L},k^\omega,\natural_\omega)}^{\mh H 
(\mf t,W_{\mf s,\omega},k^\omega,\natural_\omega)} \big( \tau_{\mh H} \otimes \log (\chi) \big) ,
\end{equation}
where $\tau_{\mh H}$ denotes the image of $\tau$ under Theorem \ref{thm:4.1} for $\mc L (F)$
and $\log (\chi)$ is fixed by $W_{\mf s,\omega}^{\mc L}$.
By assumption $\chi$ is positive with respect to all roots of $Z^\circ (\mc M) (F)$ in
$\mr{Lie}(\mc P (F) / \mc L (F))$. The root system $R_\omega$ consists of scalar multiples
of the roots of $Z^\circ (\mc M)(F)$ in $\mr{Lie}(\mc G (F))$, but some of those roots may be
left out depending on $\omega$. As a consequence the condition for a character of 
$\mh H (\mf t,W_{\mf s,\omega}^{\mc L},k^\omega,\natural_\omega)$ to be in positive position
may be weaker than the corresponding condition for $X_\nr (\mc L (F))$. Thus $\log (\chi)$
is in positive position (but one cannot conclude that in opposite direction). This shows that
\eqref{eq:4.1} is a standard module in the traditional sense, and since $W_{\mf s,\omega}^{\mc L}$
fixes $\log (\chi)$ it is also a Langlands standard module as in \eqref{eq:1.25}.

Thus the equivalence of categories in Theorem \ref{thm:4.1} sends standard representations to
Langlands standard modules. These two ``standard" sets are canonically in bijection with the
irreducible representations in the respective categories. Hence the equivalence of categories
is bijective on standard representations.
\end{proof}

When $\omega$ is simply generic \cite{BuHe}, one can improve on Theorem \ref{thm:4.1}. Let 
$\mc U$ be the unipotent radical of a minimal parabolic $F$-subgroup $\mc B$ of $\mc G$. For a 
nondegenerate character $\xi$ of $\mc U (F)$, the $\mc G (F)$-orbit of the pair $(\mc U (F),\xi)$ 
is called a Whittaker datum for $\mc G (F)$. By conjugating with a suitable element of $\mc G$, 
we may assume that $\mc M$ contains a Levi factor of $\mc B$. We recall that an 
$\mc M(F)$-representation $\pi$ is called simply generic if 
$\Hom_{\mc U (F) \cap \mc M (F)}(\pi,\xi)$ has dimension one. Although this depends on the
choice of the Whittaker datum for $\mc G (F)$, we suppress that in our terminology.

\begin{thm}\label{thm:4.3} \textup{\cite[Theorem E]{OpSo} and \cite[Theorem 10.9]{SolEnd}}\\
Assume that the supercuspidal unitary representation $\omega \in \Irr (\mc M (F))$ is simply
generic. There exists an extended affine Hecke algebra $\mc H_{\mf s} \rtimes \Gamma_{\mf s}$
whose module ca\-te\-gory is canonically equivalent with $\Rep (\mc G (F))^{\mf s}$. This 
$\mc H_{\mf s}$ is constructed from the following data:
\begin{itemize}
\item the complex torus $X_\nr (\mc M (F)) \omega \subset \Irr (\mc M(F))$,
\item a root system $R_{\mf s}$ such that $W(R_{\mf s}) \rtimes \Gamma_{\mf s} = W_{\mf s}$,
\item $q$-parameters in $\R_{\geq 1}$ as in \cite[(3.7)]{SolEnd} and $\lambda,\lambda^*$
as in \cite[(9.5)]{SolEnd}.
\end{itemize}
The equivalence of categories $\Rep (\mc G (F))^{\mf s} \cong 
\Mod (\mc H_{\mf s} \rtimes \Gamma_{\mf s})$:
\begin{enumerate}[(i)]
\item is compatible with normalized parabolic induction and restriction,
\item respects temperedness,
\item sends essentially square-integrable $\mc G (F)$-representations to essentially discrete
series $\mc H_{\mf s} \rtimes \Gamma_{\mf s}$-modules and conversely,
\item preserves genericity.
\item is compatible with twisting by unramified characters of $\mc G (F)$,
\end{enumerate}
\end{thm}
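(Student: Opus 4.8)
The plan is to obtain Theorem~\ref{thm:4.3} by combining the two cited results, so that the work consists of matching up structures rather than constructing anything new. The backbone is the general equivalence of \cite{SolEnd}: for an arbitrary supercuspidal $\omega$, \cite[Theorems~B and~C]{SolEnd} produce a \emph{twisted} affine Hecke algebra $\mc H (\mc R_{\mf s},\lambda,\lambda^*,\mb q) \rtimes \C [\Gamma_{\mf s},\natural_{\mf s}]$ whose module category is equivalent to $\Rep (\mc G (F))^{\mf s}$, with $\mc R_{\mf s}$ built from the torus $X_\nr (\mc M (F))\omega$, the root system $R_{\mf s}$ with $W (R_{\mf s}) \rtimes \Gamma_{\mf s} = W_{\mf s}$, and the $q$-parameters and $\lambda,\lambda^*$ recorded in \cite[(3.7) and (9.5)]{SolEnd}. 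The role of the simple genericity hypothesis---this is where \cite[Theorem~E]{OpSo} and \cite[Theorem~10.9]{SolEnd} enter---is twofold: it forces the 2-cocycle $\natural_{\mf s}$ to be trivial, so that the twisted algebra is the honest extended affine Hecke algebra $\mc H_{\mf s} \rtimes \Gamma_{\mf s}$, and it lets the equivalence be normalized so as to preserve genericity. So the first step is simply to quote these two theorems to obtain the equivalence and the stated shape of $\mc H_{\mf s}$.

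Next I would verify properties (i)--(v) by reading them off the construction. Properties (i) and (v)---compatibility with normalized parabolic induction/restriction and with unramified twists---are structural features already present in \cite{SolEnd}: Levi subgroups $\mc L (F)$ containing $\mc M (F)$ match parabolic subalgebras, and unramified twisting on the group side corresponds to translation by elements of $X_\nr^+ (\mc G (F))$ in the central torus of $\mc H_{\mf s}$. Property (ii), preservation of temperedness, follows because the equivalence matches the two tempered-support conditions, just as in Theorem~\ref{thm:4.1}(ii). Property (iii) is the delicate one: it asserts a converse for essentially square-integrable representations that the graded statement Theorem~\ref{thm:4.1}(iii) lacks, and the point is that $\mc H_{\mf s}$ is attached directly to the full torus $X_\nr (\mc M (F))\omega$, so that no rank is lost (contrast the rank hypothesis in Theorem~\ref{thm:3.1}(iii)); this is part of the content of \cite[Theorem~10.9]{SolEnd}. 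Property (iv), preservation of genericity, is the essential new ingredient and is precisely \cite[Theorem~E]{OpSo}. Finally, by the same argument as Lemma~\ref{lem:4.2} one records that the equivalence carries standard $\mc G (F)$-representations to Langlands standard $\mc H_{\mf s} \rtimes \Gamma_{\mf s}$-modules.

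The hard part, and the reason one cannot dispense with the hypothesis that $\omega$ be simply generic, is the triviality of $\natural_{\mf s}$ together with the normalization underlying (iv). Both rest on the multiplicity-one property of the Whittaker model---for $\omega$ itself and for the generic tempered representations occurring in standard modules---which yields a canonical normalization of the intertwining operators between parabolically induced representations $I_{\mc P (F)}^{\mc G (F)}(\tau \otimes \chi)$; feeding these normalized operators into the construction of \cite{SolEnd} is what kills the cocycle and makes the generators of $\mc H_{\mf s} \rtimes \Gamma_{\mf s}$ act compatibly with the Whittaker functional. Without simple genericity one is left with the twisted graded algebra of Theorem~\ref{thm:4.1}, and statement (iv) is unavailable there. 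All remaining compatibilities are then routine.
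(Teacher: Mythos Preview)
The paper does not give its own proof of Theorem~\ref{thm:4.3}; the statement is recorded as a direct citation of \cite[Theorem~E]{OpSo} and \cite[Theorem~10.9]{SolEnd}, with no argument supplied. Your sketch of how the theorem is assembled from those references is reasonable and in fact does more than the paper does here.

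One point of comparison worth noting: you frame the argument as starting from a general twisted affine Hecke algebra produced by \cite[Theorems~B and~C]{SolEnd} and then using simple genericity to kill the cocycle $\natural_{\mf s}$. But in this paper those theorems are cited (in Theorem~\ref{thm:4.1}) for a twisted \emph{graded} Hecke algebra, not an affine one; the paper's own commentary immediately after Theorem~\ref{thm:4.3} runs the logic in the opposite direction---one obtains Theorem~\ref{thm:4.1} \emph{from} Theorem~\ref{thm:4.3} by applying the affine-to-graded reduction of Theorem~\ref{thm:3.1}. So the affine equivalence in the simply generic case is taken as the primitive input from \cite[Theorem~10.9]{SolEnd} rather than deduced from a more general twisted affine statement. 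This is a difference of exposition rather than of mathematical content, and your identification of the role of simple genericity (trivializing the cocycle via Whittaker normalization of intertwining operators, and enabling property~(iv)) matches the paper's viewpoint, as confirmed by Corollary~\ref{cor:4.4}.
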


From Theorem \ref{thm:4.3} one can obtain Theorem \ref{thm:4.1} (when $\omega$ is simply generic)
by applying a variation on Theorem \ref{thm:3.1} to $\mc H_{\mf s} \rtimes \Gamma_{\mf s}$,
that is essentially what happens in \cite[\S 6--7]{SolEnd}. We need the version of Theorem 
\ref{thm:3.1} proven in \cite[\S 2.1]{SolAHA}, with $\mb q$ specialized to $q \in \R_{>1}$, $\mb r$ 
specialized to $r \in \R_{>0}$ and $\lambda,\lambda^*,k^u$ real-valued (but not necessarily
integral). From Theorem \ref{thm:4.3}.iv and Theorem \ref{thm:3.1}.iv we deduce:

\begin{cor}\label{cor:4.4}
If $\omega$ is simply generic, then $\natural_\omega = 1$ and the equivalence of categories in 
Theorem \ref{thm:4.1} preserves genericity.
\end{cor}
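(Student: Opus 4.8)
The plan is to deduce the corollary by bookkeeping from Theorems~\ref{thm:4.3} and~\ref{thm:3.1}, using that when $\omega$ is simply generic the equivalence of Theorem~\ref{thm:4.1} factors through the \emph{extended} affine Hecke algebra $\mc H_{\mf s}\rtimes\Gamma_{\mf s}$, whose $2$-cocycle is trivial. Triviality of that cocycle will propagate to every cocycle appearing further down the chain, and genericity will transport along each link because all the relevant equivalences are known to respect it in the untwisted case.

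First I would recall the factorization already sketched just before the corollary: for simply generic $\omega$, the equivalence of Theorem~\ref{thm:4.1} is the composite of the equivalence $\Rep(\mc G(F))^{\mf s}\cong\Mod(\mc H_{\mf s}\rtimes\Gamma_{\mf s})$ of Theorem~\ref{thm:4.3} with the real-parameter, specialized version of Theorem~\ref{thm:3.1} (as in \cite[\S 2.1]{SolAHA}, see \cite[\S 6--7]{SolEnd}) applied to $\mc H_{\mf s}\rtimes\Gamma_{\mf s}$ at the unitary element $u$ corresponding to $\omega$ in the torus $X_\nr(\mc M(F))\omega$. Under that application of Theorem~\ref{thm:3.1} the graded Hecke algebra attached to $u$ is precisely $\mh H(\mf t,W_{\mf s,\omega},k^\omega,\natural_\omega)$, and $\natural_\omega$ is, up to inversion, the restriction $\natural_u$ of the $2$-cocycle of $\mc H_{\mf s}\rtimes\Gamma_{\mf s}$ to $\Gamma_u^2$. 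Since the latter cocycle is trivial, $\natural_u$ is trivial, whence $\natural_\omega=1$; this is the first assertion.

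For the second assertion I would chain together the genericity statements in the two factors. Theorem~\ref{thm:4.3}(iv) gives that $\Rep(\mc G(F))^{\mf s}\cong\Mod(\mc H_{\mf s}\rtimes\Gamma_{\mf s})$ preserves genericity, and Theorem~\ref{thm:3.1}(iv), applicable because $\natural$ (hence $\natural_u$) is trivial, gives that the equivalence between the pertinent finite-dimensional module categories of $\mc H_{\mf s}\rtimes\Gamma_{\mf s}$ and of $\mh H(\mf t,W_{\mf s,\omega},k^\omega,1)$ respects genericity. Composing the two equivalences shows that the equivalence of Theorem~\ref{thm:4.1} takes generic $\mc G(F)$-representations to generic modules and back.

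The one point requiring genuine care — and the only real obstacle — is that the factorization uses the real-valued, not necessarily integral, parameter version of Theorem~\ref{thm:3.1} with $\mb q$ and $\mb r$ specialized, rather than the integral statement reproduced in the excerpt, and that genericity is formulated via the Steinberg representation of $\mc H(W,\mb q^\lambda)\rtimes\Gamma$ on the affine side but via $\det$ of $\C[W_{q\cE}]$ on the graded side. Both issues are exactly what \cite[Theorems 6.1 and 6.2]{SolQS} address; since those results are representation-theoretic and do not rely on integrality of $\lambda,\lambda^*$, they carry over verbatim, and the corollary follows.
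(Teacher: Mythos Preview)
Your proposal is correct and follows exactly the route the paper takes: the corollary is deduced from Theorem~\ref{thm:4.3}(iv) and Theorem~\ref{thm:3.1}(iv) via the factorization of Theorem~\ref{thm:4.1} through $\mc H_{\mf s}\rtimes\Gamma_{\mf s}$, using that this extended affine Hecke algebra has trivial cocycle so that $\natural_u$ (and hence $\natural_\omega$) is trivial. Your expansion of the real-parameter subtlety and the appeal to \cite[Theorems~6.1 and~6.2]{SolQS} is precisely the justification the paper invokes in the paragraph preceding the corollary.
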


Like in Section \ref{sec:AHA}, to apply our results from Sections 
\ref{sec:setup}--\ref{sec:generic} we need the graded Hecke algebras in Theorem 
\ref{thm:4.1} to be of geometric type, a condition on the parameter functions 
$k^\omega$. Lusztig \cite{Lus-open} has conjectured that it is valid in general. 
To apply our results about genericity, we use the following:

\begin{cond}\label{cond:4.5}
Let $\mc G (F),\mc M (F),\omega$ and $\mh H (\mf t,W(R_\omega),k^\omega) 
\rtimes \Gamma_\omega]$ be as in Theorem \ref{thm:4.1}. There must exist data 
$(G_\omega,M_\omega, q \cE_\omega,\cE_\omega)$ as in Section \ref{sec:setup}
with $\cE_\omega$ trivial, $r \in \R_{>0}$, and a Lie group isomorphism 
$\mf t \rtimes W_{\mf s,\omega} \cong \mr{Lie}(T_\omega) \rtimes W_{q\cE_\omega}$, 
which induce an algebra isomorphism 
\[
\mh H (\mf t, W(R_\omega),k^\omega) \cong 
\mh H (G_\omega^\circ, M_\omega^\circ,\cE_\omega) / (\mb r - r) .
\]
\end{cond}

\begin{thm}\label{thm:4.6}
Assume that Condition \ref{cond:4.5} holds for a unitary supercuspidal re\-pre\-sentation 
$\omega$ of a Levi subgroup $\mc M (F) \subset \mc G (F)$. Let $\pi_{st}$ be a standard 
$\mc G (F)$-representation with cuspidal support in $(\mc M (F), X_\nr^+ (\mc M (F)) \omega)$ 
and let $\pi$ be an irreducible subquotient of $\pi_{st}$. Suppose that $\omega$ is simply 
generic and $\pi$ is generic. Then $\pi$ is a subrepresentation of $\pi_{st}$.
\end{thm}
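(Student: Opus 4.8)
The plan is to reduce Theorem \ref{thm:4.6} to the corresponding statement for twisted graded Hecke algebras, namely Theorem \ref{thm:1.11} and Corollary \ref{cor:2.7}, passing through the equivalence of categories of Theorem \ref{thm:4.1} and the fact (Condition \ref{cond:4.5}) that the resulting graded Hecke algebra is of geometric type. The argument will be a combination of the bookkeeping already carried out in Lemma \ref{lem:4.2} (matching Langlands standard representations with Langlands standard modules) and the Clifford-theoretic descent performed in the proof of Theorem \ref{thm:3.4}.

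\textbf{Step 1: Set up the equivalence and locate the relevant block.} First I would observe that, since $\pi_{st}$ is standard, all its constituents have the same cuspidal support, lying in $X_\nr^+(\mc M(F))\omega$ up to conjugacy; after possibly twisting by an unramified character and replacing $\omega$ inside its inertial class, Theorem \ref{thm:4.1} applies and gives an equivalence of categories onto the category of finite-dimensional $\mh H(\mf t,W_{\mf s,\omega},k^\omega,\natural_\omega)$-modules with $\mc O(\mf t)$-weights in $\mf t_\R$. By Lemma \ref{lem:4.2} the standard representation $\pi_{st}$ corresponds to a Langlands standard module $E_u$ of this twisted graded Hecke algebra, and by Condition \ref{cond:4.5} we may identify $\mh H(\mf t,W(R_\omega),k^\omega)$ with $\mh H(G_\omega^\circ,M_\omega^\circ,\cE_\omega)/(\mb r - r)$ for some $r\in\R_{>0}$, so that $\mh H(\mf t,W_{\mf s,\omega},k^\omega,\natural_\omega)\cong \mh H(G_\omega,M_\omega,q\cE_\omega)/(\mb r-r)$. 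The irreducible subquotient $\pi$ corresponds to an irreducible subquotient $V_u$ of $E_u$.

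\textbf{Step 2: Descend to the connected graded Hecke algebra via Clifford theory.} Exactly as in the proof of Theorem \ref{thm:3.4}, I would use the decomposition \eqref{eq:1.26}: $E_u$ is an indecomposable direct summand of $\ind_{\mh H_u^\circ}^{\mh H_u}(\rho_u\otimes E_u^\circ)$ for a Langlands standard $\mh H_u^\circ := \mh H(G_\omega^\circ,M_\omega^\circ,\cE_\omega)/(\mb r - r)$-module $E_u^\circ$. Since $r>0$, Proposition \ref{prop:1.13}.a tells us $E_u^\circ$ is an analytic standard module. Clifford theory (as cited there) shows $\Res_{\mh H_u^\circ} V_u$ is semisimple with all summands in one $\Gamma_\omega$-orbit, so $V_u$ contains an irreducible subquotient $V_u^\circ$ of $E_u^\circ$ and is generated by it over $\C[\Gamma_\omega,\natural_\omega]$; moreover $V_u$ is a direct summand of $\ind_{\mh H_u^\circ}^{\mh H_u} V_u^\circ$. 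Because the action of $\Gamma_\omega$ preserves the set of positive roots of $R_\omega$, it preserves temperedness, essential discreteness, and (when $\natural_\omega$ is trivial) genericity; so $V_u^\circ$ has whichever of these properties $V_u$ has. Here Theorem \ref{thm:4.1}.ii,iii transport temperedness and essential square-integrability of $\pi$ to $V_u$, and in case (b) Corollary \ref{cor:4.4} gives $\natural_\omega=1$ and transports genericity.

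\textbf{Step 3: Apply the graded Hecke algebra results and reassemble.} Now Theorem \ref{thm:1.11} (in case (a), with $\Re(r)>0$ and $E_u^\circ$ analytic standard) and Corollary \ref{cor:2.7} (in case (b), noting $q>1$ so $r=\log q>0$, and $E_u^\circ$ analytic standard, generic) show that $V_u^\circ$ is a submodule of $E_u^\circ$. Since $\ind_{\mh H_u^\circ}^{\mh H_u}$ preserves complete reducibility \cite[Theorem 11.2]{SolGHA}, one gets $\mr{soc}(E_u) = \C[\Gamma_\omega,\natural_\omega]\cdot\mr{soc}(E_u^\circ)$, whence $V_u\subset\C[\Gamma_\omega,\natural_\omega]\cdot\mr{soc}(E_u^\circ)\subset\mr{soc}(E_u)$, i.e.\ $V_u$ is a submodule of $E_u$. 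Finally, transporting back through the equivalences of Theorem \ref{thm:4.1} and the identification in Lemma \ref{lem:4.2}, $\pi$ is a subrepresentation of $\pi_{st}$. The main obstacle I anticipate is not conceptual but bookkeeping: one must check that the equivalence of categories genuinely sends \emph{submodules} to \emph{subrepresentations} (it is an equivalence, so it does, but one should confirm it is exact and identity-on-objects in the relevant sense) and that the passage through the two reductions — $p$-adic group $\to$ twisted graded Hecke algebra $\to$ connected graded Hecke algebra — is compatible with the Langlands classification on all three levels, which is precisely what Lemma \ref{lem:4.2}, Proposition \ref{prop:1.13} and the Clifford-theory input in the proof of Theorem \ref{thm:3.4} were set up to guarantee. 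In effect this theorem is a near-verbatim repackaging of Theorem \ref{thm:3.4} with Theorem \ref{thm:4.1} replacing Theorem \ref{thm:3.1}, so the proof should be short.
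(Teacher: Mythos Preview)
Your proposal is correct and is essentially the same approach as the paper's: the paper's proof of Theorem \ref{thm:4.6} is a one-liner stating that it ``can be shown exactly like in Theorem \ref{thm:3.4}, using the results in Section \ref{sec:padic} instead of those in Section \ref{sec:AHA}''. You have written out precisely those details --- the transfer via Theorem \ref{thm:4.1} and Lemma \ref{lem:4.2}, the identification from Condition \ref{cond:4.5}, the Clifford-theoretic descent to the connected graded Hecke algebra, the application of Theorem \ref{thm:1.11} and Corollary \ref{cor:2.7}, and the socle argument --- and your closing remark that this is a repackaging of Theorem \ref{thm:3.4} is exactly the paper's point.
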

\begin{proof}
This can be shown exactly like in Theorem \ref{thm:3.4}, using the results in Section 
\ref{sec:padic} instead of those in Section \ref{sec:AHA}.
\end{proof}

For quasi-split groups, we will improve on Theorem \ref{thm:4.6} by verifying Condition 
\ref{cond:4.5}. The next result was already known for principal series representations 
\cite[Lemma 6.4]{SolQS}, and anticipated in \cite[Appendix]{OpSo}.

\begin{thm}\label{thm:4.7}
Let $\omega$ be a generic unitary supercuspidal representation of a Levi subgroup 
$\mc M (F)$ of $\mc G (F)$. Assume that (a) or (b) holds:
\enuma{
\item $\mc G(F)$ is quasi-split,
\item $\omega$ is simply generic and the extended affine Hecke algebra
$\mc H_{\mf s} \rtimes \Gamma_{\mf s}$ from Theorem \ref{thm:4.3} has equal parameters.
}
Then the twisted graded Hecke algebra $\mh H (\mf t,W(R_\omega),k^\omega) \rtimes
\C [\Gamma_\omega,\natural_\omega]$ from Theorem \ref{thm:4.1} is isomorphic to an 
extended graded Hecke algebra with equal parameters.
\end{thm}
\begin{proof}
(a) As observed in and before Corollary \ref{cor:4.4}, $\natural_\omega = 1$ and 
$\mh H (\mf t,W(R_\omega),k^\omega) \rtimes \Gamma_\omega$ can be obtained from the
extended affine Hecke algebra $\mc H_{\mf s} \rtimes \Gamma_{\mf s}$ in Theorem 
\ref{thm:4.3} by applying the
reduction procedure from \cite[\S 2.1]{SolAHA} and  more generally 
\cite[Theorems 2.5 and 2.11]{AMS3}. The effect on the parameters is given by 
\eqref{eq:3.2}. In view of \cite[(95)]{SolEnd}, this works out to
\begin{equation}\label{eq:4.2}
k^\omega_\alpha = \log (q_\alpha) / \log (q_F) \quad \text{or} \quad 
k^\omega_\alpha = \log (q_{\alpha*}) / \log (q_F) ,
\end{equation}
with $q_\alpha$ and $q_{\alpha*}$ as in Harish-Chandra's $\mu$-function 
\cite[(3.7)]{SolEnd}.
Which of the options from \eqref{eq:4.2} depends on $\omega$. We must use $q_\alpha$ if 
$\alpha$ (as a function on $X_\nr (\mc M (F)) \omega$) takes the same value at $\omega$ 
and at the base point of $X_\nr (\mc M (F)) \omega$ chosen in \cite[\S 3]{SolEnd}, and 
we must use $q_{\alpha*}$ otherwise.

Thus $k^\omega$ agrees with the function $k^{\sigma'}$ (for $\sigma' \cong \omega$) from
\cite[Proposition A.2]{OpSo}, except that the domain of $k^{\sigma'}$ is obtained from 
the domain of $k^\omega$ by omitting the $\alpha$'s with $k^\omega_\alpha = 0$. Put 
\[
R_{\sigma'} = \{ \alpha \in R_\omega : k^\omega_\alpha \neq 0 \}
\]
and let $\Gamma_{\sigma'}$ be the stabilizer in $W(R_\omega)$ of the set of positive 
roots in $R_{\sigma'}$. As in \cite[Lemma 6.3]{SolQS}, one checks that 
\[
\mh H( \mf t, W(R_\omega),k^\omega) \rtimes \Gamma_\omega 
\cong \mh H( \mf t, W(R_{\sigma'}),k^{\sigma'}) \rtimes (\Gamma_{\sigma'} \rtimes
\Gamma_\omega) .
\]
The setup for $\mc H$ and $\mh H (\mf t,W(R_\omega),k^\omega)$ in \cite[\S 3]{SolEnd}
entails that the roots in our setting are the coroots in \cite[Appendix A]{OpSo}. 
More precisely, from \cite[(A.4) and (A.5)]{OpSo} one sees that $\alpha^\vee$ over 
there corresponds to $h_\alpha^\vee$ from \cite{SolEnd}, which is just 
$\alpha \in R_{\mf s}$ in Theorem \ref{thm:4.3}. Consider an 
irreducible component $R$ of $R_{\sigma'}$. Let $\alpha \in R$ be long and $\beta \in R$ 
be short. Then \cite[Proposition A.2]{OpSo} says that 
$\kappa_R := k^{\sigma'}_\alpha / k^{\sigma'}_\beta$
equals either 1 or the square of the ratio of the lengths of $\alpha$ and $\beta$ (which is
1, 2 or 3). If $\kappa_R \neq 1$, then we can divide all long roots in $R$ by $\kappa_R$,
and obtain a new root system $R'$ with the same Weyl group. As observed in 
\cite[Example 5.4]{SolHecke}, this gives rise to an algebra isomorphism
\[
\mh H (\mf t,W(R),k^{\sigma'}) \to \mh H (\mf t,W(R'),k') ,
\]
which is the identity on $\mc O (\mf t)$, such that $k'$ takes the value 
$k^{\sigma'}_\beta \in \R_{>0}$ on all roots in $R'$. Rescaling all the elements of 
$R'$ by a factor $2 / k^{\sigma'}_\beta$ (still not touching $\mf t$), we may further
assume that $k ' = 2$ on $R'$. We do this for all irreducible components $R$ 
of $R^{\sigma'}$, and we obtain an algebra isomorphism
\begin{equation}\label{eq:4.3}
\mh H (\mf t,W(R_{\sigma'}),k^{\sigma'}) \to \mh H (\mf t,W(R'_{\sigma'}),k')
\end{equation}
which is the identity on $\mc O (\mf t)$, and $k' = 2$ on $R'_{\sigma'}$. In 
\eqref{eq:4.3} each root is scaled by a factor that depends only on $k^{\sigma'}$. 
Since $k^{\sigma'} = k^\omega |_{R_{\sigma'}}$ is $\Gamma_{\sigma'} \rtimes 
\Gamma_\omega$-invariant, the isomorphism \eqref{eq:4.3} is 
$\Gamma_{\sigma'} \rtimes \Gamma_\omega$-equivariant. Hence it extends to an
algebra isomorphism
\begin{align*}
\mh H (\mf t, W(R_\omega),k^\omega) \rtimes \Gamma_\omega & \cong \mh H 
(\mf t,W(R_{\sigma'}),k^{\sigma'}) \rtimes (\Gamma_{\sigma'} \rtimes \Gamma_\omega) \\
& \to \mh H (\mf t,W(R'_{\sigma'}),k') \rtimes 
(\Gamma_{\sigma'} \rtimes \Gamma_\omega) . 
\end{align*}
(b) This is analogous to part (a). By the equal parameter assumption and \eqref{eq:3.2}
we have
\[
k^\omega_\alpha = (\lambda (\alpha) \pm \lambda^* (\alpha))/2 \in 
\{\lambda (\alpha), 0 \} \quad \text{for all } \alpha \in R_\omega .
\]
Hence $k^\omega$ is an equal parameter function on $\{ \alpha \in R_\omega :
k^\omega_\alpha \neq 0 \}$. The rest of the argument is the same as for part (a). 
\end{proof}

From Theorem \ref{thm:4.7} we deduce that Condition \ref{cond:4.5} is automatic.

\begin{lem}\label{lem:4.8}
In the setting of Theorem \ref{thm:4.7}, Condition \ref{cond:4.5} holds with 
$(M,\cC_v^M,q\cE) = (T,\{0\},\mr{triv})$.
\end{lem}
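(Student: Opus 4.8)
The plan is to deduce Condition~\ref{cond:4.5} from Theorem~\ref{thm:4.7} by exhibiting the resulting equal-parameter graded Hecke algebra as $\mh H (G_\omega^\circ, T_\omega, \mr{triv})$ for a suitable complex reductive group $G_\omega$. First note that $\natural_\omega = 1$ by Corollary~\ref{cor:4.4}, so the algebra from Theorem~\ref{thm:4.1} is $\mh H (\mf t, W(R_\omega), k^\omega) \rtimes \Gamma_\omega$. The isomorphism constructed in the proof of Theorem~\ref{thm:4.7} is the identity on $\mc O (\mf t)$ and merely rescales the roots of $R_\omega$ within each irreducible component; restricting it to the subalgebra $\mh H (\mf t, W(R_\omega), k^\omega)$ yields $\mh H (\mf t, W(R_\omega), k^\omega) \cong \mh H (\mf t, W(R'_{\sigma'}), k') \rtimes \Gamma_{\sigma'}$ with $k' \equiv 2$, where $R_{\sigma'} = \{\alpha \in R_\omega : k^\omega_\alpha \neq 0\}$, $R'_{\sigma'}$ is its rescaling, $W(R_\omega) = W(R_{\sigma'}) \rtimes \Gamma_{\sigma'}$, and $\Gamma_{\sigma'}$ is the stabiliser in $W(R_\omega)$ of the positive system of $R_{\sigma'}$.

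The crucial step is to show $R_{\sigma'} = R_\omega$, equivalently $k^\omega_\alpha \neq 0$ for every $\alpha \in R_\omega$, equivalently $\Gamma_{\sigma'} = \{1\}$. This is exactly where quasi-splitness of $\mc G (F)$ and genericity of $\omega$ enter. Using the description of $k^\omega$ through Harish-Chandra's $\mu$-function (cf.\ \eqref{eq:4.2} and the proof of Theorem~\ref{thm:4.7}) together with Shahidi's results on reducibility of generic parabolically induced representations, one checks that whenever the $\mu$-function of $\omega$ degenerates along a root $\alpha$ --- so that $\alpha \in R_\omega$ --- the corresponding reducibility point occurs at a nonzero value of $\langle \,\cdot\,, \alpha^\vee \rangle$, whence $k^\omega_\alpha \neq 0$; in the principal series case this is \cite[Lemma~6.4]{SolQS}, and the general mechanism is implicit in \cite[Proposition~A.2]{OpSo}. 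Granting this, $\mh H (\mf t, W(R_\omega), k^\omega) \cong \mh H (\mf t, W(R_\omega), 2)$, where the root system is a per-component rescaling $R'_\omega$ of $R_\omega$ --- still a reduced root system with Weyl group $W(R_\omega)$ --- and the parameter function is the constant $2$.

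It remains to realise this algebra geometrically with cuspidal quasi-support $(T_\omega, \{0\}, \mr{triv})$. Choose a root datum with root system $\tfrac{c}{2} R'_\omega$ (where $c$ is the constant parameter the geometric construction attaches to $(G_\omega^\circ, T_\omega, \mr{triv})$; in fact $c = 2$, so no rescaling is needed), whose character lattice is a full-rank $W_{\mf s,\omega}$-stable lattice in $\mf t^*$ and whose Weyl group acts on $\mr{Lie}(T_\omega) = \mf t$ as $W(R_\omega)$; by the existence theorem for reductive groups with prescribed root datum this is the root datum of a connected reductive complex group $G_\omega^\circ$ with maximal torus $T_\omega$, and $W_{\mf s,\omega}$ acts faithfully on $T_\omega$ since it does so on $\mf t$. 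Put $G_\omega = G_\omega^\circ \rtimes \Gamma_\omega$ with $\Gamma_\omega$ acting through a pinning-preserving automorphism adapted to the chosen positive system (possible because $\Gamma_\omega$ preserves positivity). Then $T_\omega$ is a maximal torus, hence a Levi subgroup, of $G_\omega^\circ$, and $Z_{G_\omega}(Z(T_\omega)^\circ) = Z_{G_\omega}(T_\omega) = T_\omega$ by faithfulness, so $(M_\omega, \cC_v^{M_\omega}, q\cE_\omega) = (T_\omega, \{0\}, \mr{triv})$ is a cuspidal quasi-support with $\cE_\omega = \mr{triv}$ and $W_{q\cE_\omega} = N_{G_\omega}(T_\omega)/T_\omega = W(R_\omega) \rtimes \Gamma_\omega = W_{\mf s,\omega}$, which gives the required Lie group isomorphism $\mf t \rtimes W_{\mf s,\omega} \cong \mr{Lie}(T_\omega) \rtimes W_{q\cE_\omega}$. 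Finally $\mh H (G_\omega^\circ, M_\omega^\circ, \cE_\omega) = \mh H (G_\omega^\circ, T_\omega, \mr{triv}) = \mh H (\mf t, W(R_\omega), c, \mb r)$ with roots $\tfrac{c}{2} R'_\omega$; rescaling the roots by $2/c$ (which does not touch $\mf t$) and passing to the quotient by $(\mb r - 1)$ identifies this with $\mh H (\mf t, W(R_\omega), 2)$ on the roots $R'_\omega$ from the previous paragraph. Hence Condition~\ref{cond:4.5} holds with $r = 1$.

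The main obstacle is the crucial step of the second paragraph: verifying that $k^\omega$ is nowhere vanishing on $R_\omega$ in the quasi-split generic situation, i.e.\ that no reflection of $W(R_\omega)$ degenerates into a ``$\Gamma$-like'' element of the graded algebra. Everything else is routine --- Theorem~\ref{thm:4.7} already equalises the parameters, and the last paragraph is only bookkeeping with root data and possibly disconnected complex reductive groups.
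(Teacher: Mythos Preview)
Your proof hinges on the ``crucial step'' that $k^\omega_\alpha \neq 0$ for every $\alpha \in R_\omega$, i.e.\ that $\Gamma_{\sigma'}$ is trivial. You correctly flag this as the main obstacle, and indeed you do not prove it: the references you cite (\cite[Lemma~6.4]{SolQS}, \cite[Proposition~A.2]{OpSo}) treat the principal series case and give the ratio $k^{\sigma'}_\alpha / k^{\sigma'}_\beta$ on $R_{\sigma'}$, but neither establishes that no root of $R_\omega$ has vanishing parameter in the general quasi-split generic setting.

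The paper's proof avoids this difficulty entirely. Rather than trying to show $R_{\sigma'} = R_\omega$, it takes $G^\circ$ to be a connected reductive group with root system $R_{\sigma'}$ (not $R_\omega$), passes to a simply connected cover of the derived group so that the action of $\Gamma_{\sigma'} \rtimes \Gamma_\omega$ on $\mf t \rtimes W(R_{\sigma'})$ lifts to pinning-preserving automorphisms of $G^\circ$, and forms $G = G^\circ \rtimes (\Gamma_{\sigma'} \rtimes \Gamma_\omega)$. The possibly nontrivial $\Gamma_{\sigma'}$ is simply absorbed into the component group $G/G^\circ$. Since $W_{\mf s,\omega}$ acts faithfully on $\mf t$ one has $Z_G(T) = T$, so $(T,\{0\},\mr{triv})$ is a cuspidal quasi-support, the 2-cocycle is trivial, and
\[
\mh H(\mf t, W(R_\omega), k^\omega) \rtimes \Gamma_\omega \;\cong\;
\mh H(\mf t, W(R_{\sigma'}), k^{\sigma'}) \rtimes (\Gamma_{\sigma'} \rtimes \Gamma_\omega) \;\cong\;
\mh H(G, T, \mr{triv})/(\mb r - 1).
\]
Your construction in the final paragraph is essentially the same bookkeeping, but applied to the wrong root system. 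If you replace $R_\omega$ by $R_{\sigma'}$ and $\Gamma_\omega$ by $\Gamma_{\sigma'} \rtimes \Gamma_\omega$ there, your argument becomes the paper's and the ``main obstacle'' disappears.
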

\begin{proof}
Let $G^\circ$ be a connected complex reductive group with a maximal torus $T$, so that
$R(G^\circ,T) \cong R_{\sigma'}$ and 
\[
\mf t \rtimes W(R_{\sigma'}) \cong \mr{Lie}(T) \rtimes W(G^\circ,T) .
\]
By passing to a cover, we may assume that the derived group of $G^\circ$ is simply connected.
The action of $\Gamma_{\sigma'} \rtimes \Gamma_\omega$ on $\mf t \rtimes W(R_{\sigma'})$ can
be transferred to $\mr{Lie}(T) \rtimes W(G^\circ,T)$, and then (using the simply connectedness
of $G^\circ_\der$) lifted to an action on $G^\circ$ that preserves a pinning. In this way we 
build the complex reductive group $G = G^\circ \rtimes (\Gamma_{\sigma'} \rtimes \Gamma_\omega)$.

Since $W_{\mf s,\omega}$ acts faithfully on $\mf t$, $Z_G (T) = Z_{G^\circ}(T) = T$. In particular
$T$ is a quasi-Levi subgroup of $G$, and it admits a quasi-cuspidal support $(T,\{0\},\mr{triv})$.
All the parameters $k_\alpha$ for $(T,\{0\},\mr{triv})$ are equal to 2 \cite[\S 0.3]{Lus-Cusp1}.
Moreover the 2-cocycle $\natural_{\mr{triv}}$ is trivial because $\Gamma_{\sigma'} \rtimes 
\Gamma_\omega$ acts naturally on all the relevant perverse sheaves constructed from 
$(T,\{0\},\mr{triv})$. We conclude that
\[
\mh H (\mf t, W(R_\omega),k^\omega) \rtimes \Gamma_\omega \cong
\mh H (\mf t,W(R_{\sigma'}),k^{\sigma'}) \rtimes (\Gamma_{\sigma'} \rtimes \Gamma_\omega) \cong
\mh H (G,T,\mr{triv}) / (\mb r - 1) . \qedhere
\]
\end{proof}

We are ready to prove the generalized injectivity conjecture from \cite{CaSh}.

\begin{thm}\label{thm:4.9}
Let $\pi_{st}$ be a standard representation of a quasi-split reductive $p$-adic group $\mc G (F)$.
Let $\pi$ be a generic irreducible subquotient of $\pi_{st}$. Then $\pi$ is a subrepresentation
of $\pi_{st}$.
\end{thm}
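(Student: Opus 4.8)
The plan is to reduce Theorem~\ref{thm:4.9} to Theorem~\ref{thm:4.6}.b, the only extra ingredient being that in the quasi-split case Condition~\ref{cond:4.5} is automatic. First I would write $\pi_{st} = I_{\mc P (F)}^{\mc G (F)}(\tau \otimes \chi)$ in Langlands form, with $\tau$ an irreducible tempered representation of a Levi subgroup $\mc L (F) \subseteq \mc G (F)$ and $\chi \in X_\nr^+ (\mc L (F))$ in positive position relative to $\mc P (F)$. Since $\tau$ is tempered, its cuspidal support is carried by a pair $(\mc M (F),\omega)$ with $\mc M (F) \subseteq \mc L (F)$ and $\omega$ a unitary supercuspidal representation of $\mc M (F)$. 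As the restriction to $\mc M (F)$ of a positive unramified character of $\mc L (F)$ is again positive, the cuspidal support of $\pi_{st}$ lies in $(\mc M (F), X_\nr^+ (\mc M (F))\omega)$ up to $\mc G (F)$-conjugacy, so $\pi_{st}$ is precisely a standard representation of the kind handled in Theorem~\ref{thm:4.6}, for the inertial class $\mf s = [\mc M (F),\omega]$; in particular $\pi$ lies in the Bernstein block $\Rep (\mc G (F))^{\mf s}$.

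Next I would argue that $\omega$ is simply generic. Since $\pi$ is a generic irreducible representation lying in $\Rep (\mc G (F))^{\mf s}$, its cuspidal support has the form $(\mc M (F),\chi_0 \omega)$ with $\chi_0 \in X_\nr (\mc M (F))$; and it is a standard fact (from the theory of Whittaker functionals and the behaviour of genericity under parabolic induction) that the cuspidal support of a generic irreducible representation of a quasi-split group is itself generic. Hence $\chi_0 \omega$, and therefore $\omega = \chi_0^{-1}(\chi_0 \omega)$, is a generic representation of $\mc M (F)$; since $\mc M (F)$ is quasi-split this implies that $\omega$ is simply generic \cite{Rod,Shal}.

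Finally, Theorem~\ref{thm:4.7} shows that the twisted graded Hecke algebra $\mh H (\mf t, W(R_\omega),k^\omega) \rtimes \C [\Gamma_\omega,\natural_\omega]$ attached to $(\mc G (F),\mc M (F),\omega)$ in Theorem~\ref{thm:4.1} is isomorphic to an extended graded Hecke algebra with equal parameters, and Lemma~\ref{lem:4.8} then gives that Condition~\ref{cond:4.5} holds for $\omega$, with geometric data $(T,\{0\},\mr{triv})$. With Condition~\ref{cond:4.5} verified, $\omega$ simply generic and $\pi$ generic, Theorem~\ref{thm:4.6}.b applies directly to $\pi_{st}$ and its generic irreducible subquotient $\pi$ and yields that $\pi$ is a subrepresentation of $\pi_{st}$. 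The only step here that is not formal bookkeeping with cuspidal supports and the Langlands classification is the passage from genericity of $\pi$ to genericity of $\omega$ — which is what makes Theorem~\ref{thm:4.7} and Lemma~\ref{lem:4.8} available — and that is exactly where I would expect the (rather mild) difficulty to lie.
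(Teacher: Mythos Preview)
Your proposal is correct and follows essentially the same route as the paper: both arguments pass from genericity of $\pi$ to genericity of its supercuspidal support (the paper cites \cite{BuHe} for this), twist to make that support unitary, invoke Lemma~\ref{lem:4.8} to verify Condition~\ref{cond:4.5}, and then apply Theorem~\ref{thm:4.6}.b. The only cosmetic difference is that you extract the unitary supercuspidal $\omega$ from the tempered inducing datum $\tau$, whereas the paper takes the cuspidal support of $\pi$ directly and divides off the absolute value of its central character; both produce the same $\omega$.
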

\begin{proof}
Let $(\mc M (F),\omega')$ be the cuspidal support of $\pi$ (and hence of $\pi_{st}$). 
The normalized parabolic induction of a nongeneric irreducible representation is not generic 
\cite{BuHe}, so $\omega'$ must be generic. More precisely, when we choose a representative 
$(\mc U (F),\xi)$ for the Whittaker datum so that $\mc M \cap \mc U$ is a maximal 
unipotent subgroup of $\mc M$, then $\omega'$ is $(\mc U (F) \cap \mc M (F),\xi)$-generic.
Let $|\chi|$ be the absolute value of the central character of $\omega'$. Then
$|\chi| \in X_\nr^+ (\mc M (F))$ and $\omega = \omega' \otimes |\chi|^{-1}$ is unitary.
Twisting by unramified characters does not perturb the genericity of $\omega'$, so we are in the
setting of Theorems \ref{thm:4.6} and \ref{thm:4.7}.a. Condition \ref{cond:4.5} holds by 
Lemma \ref{lem:4.8}, so Theorem \ref{thm:4.6} yields the desired statement.
\end{proof}

\section{Relations with the local Langlands correspondence}
\label{sec:LLC}

To prepare for the upcoming arguments, we compare properties of (enhanced) L-parameters
for reductive $p$-adic groups and for twisted graded Hecke algebras. 

Let $\mb W_F$ be the Weil group of $F$ and let ${}^L \mc G = \mc G^\vee \rtimes \mb W_F$ be the
Langlands dual group of $\mc G (F)$. Let $\mc M (F)$ be a standard Levi subgroup of $\mc G (F)$ 
and let $\mc M^\vee \subset \mc G^\vee$ be the associated standard Levi subgroup on the dual side. 
Let $(\phi_b, q\epsilon)$ be a bounded enhanced cuspidal L-parameter for $\mc M (F)$, in the form
\[
\phi_b : \mb W_F \times SL_2 (\C) \to {}^L \mc M
\]
and with $q\epsilon$ an irreducible representation of the appropriate component group $S_\phi$.
Consider the group
\[
G_{\phi_b} = Z^1_{\mc G^\vee_\Sc} (\phi_b |_{\mb W_F})
\]
from \cite[(75)]{AMS1}. We mainly need its identity component 
\[
G_{\phi_b}^\circ = Z_{\mc G^\vee_\Sc} (\phi_b (\mb W_F))^\circ 
\]
and its Lie algebra
\[
\mf g^\vee_{\phi_b} = \mr{Lie} (G_{\phi_b}) = \mr{Lie} (\mc G^\vee_\Sc)^{\phi_b (\mb W_F)} .
\]
Let $M_{\phi_b}$ be the quasi-Levi subgroup of $G_{\phi_b}$ determined by $(\mc M^\vee, \phi_b, 
q\epsilon)$ and let $q\cE$ be the cuspidal local system (determined by $q\epsilon$) on the nilpotent 
orbit in Lie$(M_{\phi_b})$ from $\phi_b |_{SL_2 (\C)}$.
Recall from \cite[\S 3.3.1]{Hai} that the group of unramified characters $X_\nr (G)$ is naturally
isomorphic to the complex torus 
\[
X_\nr ({}^L \mc G) := (Z(\mc G^\vee)^{\mb I_F})^\circ_{\Fr_F},
\]
where $\mb I_F$ denotes the inertia subgroup of $\mb W_F$. From these data we build
\[
\mh H (\phi_b,q\epsilon) = \mh H \big( G_{\phi_b} \times X_\nr ({}^L \mc G), 
M_{\phi_b} \times X_\nr ({}^L \mc G), q\cE \big) .
\]
It is almost the same as the algebra 
\[
\mh H ( \phi_b, v, q\epsilon, \vec{\mb r}) = \mh H \big( G_{\phi_b} \times X_\nr ({}^L \mc G), 
M_{\phi_b} \times X_\nr ({}^L \mc G), q\cE, \vec{\mb r} \big) 
\]
from \cite[(3.9)]{AMS3}. The only difference is that we set all the indeterminates $\mb r_i$ from
\cite{AMS3} equal to $\mb r$, by dividing out the ideal generated by the $\mb r_i - \mb r_j$ with
$i \neq j$.

The group $X_\nr (\mc M (F)) \cong X_\nr ({}^L \mc M)$ acts naturally on L-parameters for 
$\mc M (F)$, by adjusting the value $\phi (\Fr_F)$ while keeping 
$\phi|_{\mb I_F \times SL_2 (\C)}$ fixed. We refer to this as twisting Langlands parameters by
unramified characters. Let $X_\nr^+ ({}^L \mc M) \subset X_\nr ({}^L \mc M)$ be the subgroup
corresponding to $X_\nr^+ (\mc M (F)) \subset X_\nr (\mc M (F))$. As 
\[
Z(M)^\circ \times X_\nr (\mc G^\vee) \to X_\nr (\mc M^\vee)
\]
is a finite covering \cite[Lemma 3.7]{AMS3}, $X_\nr^+ ({}^L \mc M)$ can be identified with 
a subgroup of $Z(M)^\circ \times X_\nr^+ ({}^L \mc G)$. 

Recall that an L-parameter $(y,\sigma,r)$ for $\mh H (\phi_b,q\epsilon)$ contains 
(up to conjugation) precisely the same information as $(y,\sigma_0,r)$, for $\sigma_0$ as 
in \eqref{eq:1.22}. As worked out in \cite[\S 3.1]{AMS3}, the exponential map for 
$\mc G^\vee$ provides a natural bijection from 
\begin{itemize}
\item the set of enhanced L-parameters $(y,\sigma,r,\rho)$ or $(y,\sigma_0,r,\rho)$ for $\mh H
(\phi_b ,q\epsilon)$, with $r = -\log (q_F)/2$ and $\sigma_0 \in X_\nr^+ ({}^L \mc M)$ \qquad to
\item the set of enhanced L-parameters for $\mc G (F)$ with cuspidal support in\\
$(\mc M (F), X_\nr^+ ({}^L \mc M) \phi_b, q\epsilon)$.
\end{itemize}
By construction this bijection is compatible with the cuspidal support maps.
We note that in \cite[\S 3.1]{AMS3} the role of $\sigma_0$ is played by 
$\log (\phi (\Fr_F) \phi_b (\Fr_F)^{-1})$. 
Explicitly, $\exp (y,\sigma,-\log(q_F)/2,\rho) = (\phi ,\rho)$ where
\[
\phi |_{\mb I_F} = \phi_b |_{\mb I_F} ,\; \phi (\Fr_F) = \exp (\sigma_0) \phi_b (\Fr_F) ,\; 
\phi ( \matje{1}{1}{0}{1} ) = \exp (y) .
\]
If we present $\phi$ as a Weil--Deligne parameter $\phi'$, then 
\[
\phi' (\Fr_F) = \exp (\sigma) \phi_b (\Fr_F) 
\text{ and the nilpotent operator for } \phi' \text{ is } y.
\]
Recall the definition of an open Langlands parameter from \eqref{eq:0.open} and the definition 
of ``open with respect to cuspidal supports" from \eqref{eq:0.scopen}. We call a 
Langlands parameter $\psi : \mb W_F \times SL_2 (\C) \to {}^L \mc G$ for essentially bounded if 
$\psi (\Fr_F) = (a,\Fr_F)$ with $a Z(\mc G^\vee)$ in a bounded subgroup of $\mc G^\vee / 
Z(\mc G^\vee)$. By the Langlands classification for L-parameters \cite{SiZi}, every discrete
L-parameter $\psi$ is essentially bounded. More precisely, $\psi$ can be expressed (uniquely
up to conjugation) as a bounded discrete L-parameter twisted by an element of $X_\nr^+ ({}^L \mc G)$.

\begin{lem}\label{lem:7.3}
Let $(y,\sigma,r,\rho)$ be an enhanced L-parameter for $\mh H (\phi_b,q\epsilon)$, with 
$r = -\log (q_F)/2$ and $\sigma_0 \in X_\nr^+ ({}^L \mc M)$.
\enuma{
\item $(y,\sigma,r,\rho)$ is bounded if and only if $\exp (y,\sigma,r,\rho)$ is bounded.
The same holds with essentially bounded.
\item $(y,\sigma,r)$ is open if and only if $\exp (y,\sigma,r)$ is open.
\item $(y,\sigma,r,\rho)$ is open with respect to 
$\big( G_{\phi_b} \times X_\nr ({}^L \mc G), M_{\phi_b} \times X_\nr ({}^L \mc G), q\cE \big)$  
if and only if $\exp (y,\sigma,r,\rho)$ is open with respect to cuspidal supports.
}
\end{lem}
\begin{proof}
(a) Recall from \eqref{eq:1.22} that $\sigma = \sigma_0 + \textup{d}\gamma_y ( \matje{r}{0}{0}{-r})$.
The first condition is equivalent with $\sigma_0 = 0$ because $\sigma_0 \in X_\nr^+ ({}^L M)$,
and the second condition is equivalent with $\sigma_0 = 0$ because $\phi_b$ is bounded.

Both for $\mh H (\phi_b,q \epsilon)$ and for $\mc G (F)$, essential boundedness is 
equivalent to $\sigma_0 \in Z(\mf g^\vee) \cap \log ( X_\nr^+ ({}^L \mc M))$.\\
(b) We analyse the vector space in which $y$ lives:
\begin{align*}
\mf g^{\vee,\sigma,r}_{\phi_b,N} & = \{ X \in \mf g^\vee_{\phi_b} : [\sigma,X] = 2r X \} \\
& = \big\{ X \in \mr{Lie} \big( Z_{\mc G^\vee_\Sc}(\phi_b (\mb W_F)) \big) : 
[\sigma,X] = -\log (q_F) X \big\} \\
& = \{ X \in \mf g^\vee : [\sigma,X] = \log (q_F^{-1}) X, 
\mr{Ad}(\phi_b (w)) X = X \; \forall w \in \mb W_F \} .
\end{align*}
As $\phi_b$ is bounded, Ad$(\phi_b (w))$ only has eigenvalues of absolute value 1. Hence
\[
\mf g^{\vee,\sigma,r}_{\phi_b,N} = \{ X \in \mf g^\vee : \mr{Ad}(\exp (\sigma) \phi_b (\Fr_F)) X 
= q_F^{-1} X, \mr{Ad}(\phi_b (w)) X = X \; \forall w \in \mb I_F \} ,
\]
which equals $\mf g_{\phi'}^\vee$ for $(\phi',\rho)$ the Weil--Deligne parameter associated to
$\exp (y,\sigma,r,\rho)$. This shows that the nilpotent parts of the L-parameters on both sides
can be chosen from the same vector space $\mf g^{\vee,\sigma,r}_{\phi_b,N} = \mf g^\vee_{\phi'}$.

The group whose orbits on $\mf g^\vee_{\phi'}$ determine the ``open" property is
\begin{equation}\label{eq:7.1}
Z_{\mc G^\vee}(\phi' (\mb W_F)) = Z_{\mc G^\vee} (\phi_b (\mb W_F), \exp (\sigma)) =
Z_{\mc G^\vee_\der} (\phi_b (\mb W_F), \exp (\sigma)) Z(\mc G^\vee)^{\mb W_F} .
\end{equation}
On the other hand, for $\mf g^{\vee,\sigma,r}_{\phi_b,N}$ openness comes from orbits for the group\\
$Z_{G_{\phi_b} \times X_\nr ({}^L \mc G)}(\sigma)$, which has identity component
\begin{equation}\label{eq:7.2}
Z_{G_{\phi_b}}(\sigma)^\circ \times X_\nr ({}^L \mc G) = 
Z_{\mc G^\vee_\Sc} (\phi_b (\mb W_F), \exp (\sigma) )^\circ \times X_\nr ({}^L \mc G) .
\end{equation}
On both sides, the unique open orbit in the vector space is already a single orbit for the identity
component of the acting group, by \cite[\S 2.6]{KaLu}. Since $Z(\mc G^\vee)$ acts trivially,
the identity components of the groups \eqref{eq:7.1} and \eqref{eq:7.2} have exactly the same 
orbits on $\mf g^{\vee,\sigma,r}_{\phi_b,N} = \mf g^\vee_{\phi'}$. Thus $(y,\sigma,r)$ is
open if and only if $y$ lies in the unique open orbit of \eqref{eq:7.2} in 
$\mf g^{\vee,\sigma,r}_{\phi_b,N}$, which is the same as the open orbit of \eqref{eq:7.1} in
$\mf g^\vee_{\phi'}$. That condition is equivalent to openness of $\exp (y,\sigma,r)$.\\
(c) This is shown in the same way as part (b), we only have to add a condition to both
sets of nilpotent elements. For $\mf g^{\vee,\sigma,r}_{\phi_b,N}$, Definition \ref{def:3.1} 
says we need to restrict to $X$ such that there exists an enhancement $\rho'$ for which 
$(X,\sigma,r,\rho')$ has the same cuspidal support as $(y,\sigma,r,\rho)$. From 
$\mf g^\vee_{\phi'}$ we need to restrict to those $X$ such that there exists an enhancement 
$\rho'$ for which $(\phi',X,\rho')$ has the same cuspidal support as $(\phi',y,\rho)$. 
As our bijection exp between the two kinds of enhanced L-parameters is compatible with
cuspidal supports, we end up with the same subset of $\mf g^{\vee,\sigma,r}_{\phi_b,N} = 
\mf g^\vee_{\phi'}$ on both sides. 
\end{proof}

The following result has been proven in another way in \cite[Proposition 4.1 and \S 6]{CDFZ}, 
although in less generality.

\begin{prop}\label{prop:7.4}
Every essentially bounded (e.g. bounded or discrete) L-parameter for $\mc G (F)$ is open.
\end{prop}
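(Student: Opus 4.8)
The plan is to transfer the statement to the setting of geometric graded Hecke algebras, where openness of essentially bounded parameters is already recorded in Lemma \ref{lem:1.9}.b, and then to pull it back through the dictionary of Lemma \ref{lem:7.3}. Since a bounded parameter is trivially essentially bounded, and a discrete parameter is essentially bounded by \cite{SiZi} as recalled just above, it suffices to treat an essentially bounded L-parameter $\phi$ for $\mc G(F)$, say in Weil--Deligne form with nilpotent part $N$; note that the ``open'' condition \eqref{eq:0.open} depends only on $(\phi,N)$ and not on any enhancement.

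First I would fix the cuspidal support. Choose an enhancement making $(\phi,N)$ into an enhanced L-parameter $(\phi,N,\rho)$ and let its cuspidal support be $(\mc M(F),\phi_L,q\epsilon)$, with $\mc M$ a standard Levi. Because cuspidal supports are compatible with twisting by unramified characters and take bounded parameters to bounded parameters, and because $\phi$ is essentially bounded, I may choose the bounded cuspidal base point $\phi_b$ for $\mc M(F)$ so that $\phi_L \in X_\nr^+ ({}^L\mc M)\phi_b$; equivalently, one decomposes the unramified character relating $\phi_L$ to any bounded representative into a unitary factor and a factor in $X_\nr^+$, and absorbs the unitary factor into $\phi_b$ (this is parallel to the decomposition $\omega' = \omega\otimes|\chi|$ used in the proof of Theorem \ref{thm:4.9}). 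Thus $(\phi,N,\rho)$ has cuspidal support in $(\mc M(F),X_\nr^+({}^L\mc M)\phi_b,q\epsilon)$, which places us exactly in the situation of the bijection recalled before Lemma \ref{lem:7.3}.

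Now apply that bijection from \cite[\S 3.1]{AMS3}: it produces an enhanced L-parameter $(y,\sigma,r,\rho)$ for $\mh H(\phi_b,q\epsilon)$ with $r = -\log(q_F)/2$ and $\sigma_0 \in X_\nr^+({}^L\mc M)$, such that $\exp(y,\sigma,r,\rho) = (\phi,\rho)$ (with $N$ corresponding to $y$). Since $\phi$ is essentially bounded, Lemma \ref{lem:7.3}.a gives that $(y,\sigma,r)$ is essentially bounded as a parameter for $\mh H(\phi_b,q\epsilon)$. As $q_F \geq 2$ we have $r = -\log(q_F)/2 < 0$, so $\Re(r)\neq 0$, and Lemma \ref{lem:1.9}.b applies, showing that $(y,\sigma,r)$ is an open parameter. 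Finally Lemma \ref{lem:7.3}.b translates this back: $\exp(y,\sigma,r,\rho) = (\phi,N)$ is open, which is the assertion.

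Essentially all the content here is packaged into Lemmas \ref{lem:1.9} and \ref{lem:7.3}, so the proof is short; the one place that genuinely needs care is the normalization in the second paragraph --- ensuring that the twist landing on $\sigma_0$ is \emph{positive} and not merely unramified, since $\sigma_0 \in X_\nr^+({}^L\mc M)$ is precisely the hypothesis under which Lemma \ref{lem:7.3} is stated. (Implicitly one should also note that $\mh H(\phi_b,q\epsilon)$ is of the form $\mh H(G,M,q\cE)$ considered in Section \ref{sec:setup}, with $G = G_{\phi_b}\times X_\nr(\mc G^\vee)$ possibly disconnected, so that Lemma \ref{lem:1.9} legitimately applies to it.) Beyond this bookkeeping I do not expect any serious obstacle.
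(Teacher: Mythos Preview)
Your proof is correct and follows essentially the same route as the paper: take a cuspidal support, write it as a positive unramified twist of a bounded $\phi_b$, transfer via Lemma~\ref{lem:7.3} to an essentially bounded parameter for $\mh H(\phi_b,q\epsilon)$, invoke Lemma~\ref{lem:1.9}.b there (using $r=-\log(q_F)/2<0$), and transfer openness back via Lemma~\ref{lem:7.3}.b. You are slightly more explicit than the paper about the normalization and the disconnectedness of $G_{\phi_b}\times X_\nr({}^L\mc G)$, but the argument is the same.
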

\begin{proof}
Every enhanced L-parameter for $\mc G (F)$ has a cuspidal support \cite{AMS1}. That can be written
as $(\mc M (F),\phi, q\epsilon)$ with $(\phi,q\epsilon)$ a cuspidal enhanced L-parameter for a
Levi subgroup $\mc M (F)$ of $\mc G (F)$. Then $\phi$ is discrete, so by \cite{SiZi} of the form
$z \phi_b$ with $\phi_b$ bounded discrete and $z \in X_\nr^+ ({}^L \mc M)$. Now Lemma \ref{lem:7.3}
allows us to transfer the issue to L-parameters for $\mh H (\phi_b,q\epsilon)$. In that setting,
we proved the desired statement in Lemma \ref{lem:1.9}.

Alternatively, one can translate the proofs of Lemmas \ref{lem:1.12} and \ref{lem:1.9} directly
to the current setting. 
\end{proof}

To get Langlands parameters into play from representations of a reductive $p$-adic group 
$\mc G (F)$, we need to assume some reasonable form of the local Langlands correspondence 
involving Hecke algebras. As in Section \ref{sec:padic} we consider a Bernstein block 
$\Rep (\mc G (F))^{\mf s}$ 
determined by a unitary supercuspidal representation $\omega$ of $\mc M (F)$. We write 
\[
\Irr (\mc G (F))^{\mf s} = \Irr (\mc G (F)) \cap \Rep (\mc G (F))^{\mf s}.
\]

\begin{cond}\label{cond:7.1}
A local Langlands correspondence is known for $\Irr (\mc G (F))^{\mf s}$ and 
$\Irr (\mc M (F))^{\mf s}$, where $\mf s = [\mc M (F),\omega]$. 
Let $(\phi_b,\rho)$ be the enhanced bounded L-parameter of $\omega$. 
There is an isomorphism between the graded Hecke algebras of geometric type
\begin{itemize}
\item $\mh H (\mf t, W_{\mf s,\omega}, k^\omega, \natural_\omega)$ from Theorem \ref{thm:4.1},
\item $\mh H (\phi_b,q\epsilon) / (\mb r - \log (q_F)/2)$ from \cite[\S 3.1]{AMS3} 
\end{itemize}
induced by isomorphisms between $(\mf t \rtimes W_{\mf s,\omega},R_\omega)$ and the analogous 
data for $\mh H (\phi_b,q\epsilon)$. 

The same holds if we twist $\omega$ by a unitary unramified character of $\mc M (F)$.

Furthermore, the local Langlands correspondence for $\Irr (\mc G (F))^{\mf s}$ can be constructed 
via Theorem \ref{thm:4.1}, the above isomorphisms (for all such twists of $\omega$) and the
parametrization of $\Irr \big( \mh H (\phi_b,q\epsilon) / (\mb r - \log (q_F)/2) \big)$ from 
\cite[Theorem 3.8]{AMS3}.
\end{cond}

We point out that Condition \ref{cond:7.1} implies most of Condition \ref{cond:4.5}, 
only not the triviality of $\cE$. We refer to 
\cite{AMS1,AMS2,AMS3} for more background. A list of cases in which Condition 
\ref{cond:7.1} has been verified can be found in \cite[Theorem 5.4]{SolKL} and in the 
introduction before Theorem \ref{thm:E}. We expect that Condition \ref{cond:7.1} 
is always fulfilled.

\begin{thm}\label{thm:7.2}
Assume Condition \ref{cond:7.1} and let $\pi \in \Irr (\mc G (F))^{\mf s}$.
\enuma{
\item If $\pi$ is tempered or essentially square-integrable, then its L-parameter is open.
\item Suppose that $\mf s = [\mc M (F),\omega]$ with $\omega$ simply generic and 
$\pi$ generic. Then the L-parameter of $\pi$ is open with respect to cuspidal supports.
\item Suppose that $\mc G (F)$ is quasi-split, $\mf s = [\mc M (F),\omega]$ and $\omega$ and 
$\pi$ are generic. Then the L-parameter of $\pi$ is open.
}
\end{thm}
\begin{proof}
(a) By Lemma \ref{lem:7.3}.a and by the known properties of the LLC imposed by Condition 
\ref{cond:7.1}, or by \cite{AMS2,AMS3}, the L-parameter of $\pi$ is bounded or discrete. 
As shown in Proposition \ref{prop:7.4} and in \cite{CDFZ}, such an L-parameter is open. \\
(b) By Theorem \ref{thm:4.1} and Corollary \ref{cor:4.4}, the associated 
$\mh H (\phi_b,q\epsilon)$-module $\pi_{\mh H}$ is generic. Then Theorem \ref{thm:2.5}.(a,c)
says that the enhanced L-parameter of $\pi_{\mh H}$ is open with respect to 
$\big( G_{\phi_b} \times X_\nr ({}^L \mc G), M_{\phi_b} \times X_\nr ({}^L \mc G), q\cE \big)$.
Let $(\phi,\rho)$ be the image (under exp) of this enhanced L-parameter, Lemma \ref{lem:7.3}.c
says it is open with respect to cuspidal supports. The constructions in 
\cite[\S 3]{AMS3} entail that $(\phi,\rho)$ is the L-parameter of 
$\pi \in \Irr (\mc G (F))$.\\
(c) We only need to modify the arguments for part (b) a little. Namely, Condition 
\ref{cond:7.1}, Theorem \ref{thm:4.7} and Lemma \ref{lem:4.8} entail that 
$\mh H (\phi_b, q\epsilon)$ comes from a trivial equivariant local system $\cE$. 
Then Theorem \ref{thm:2.5}.(b,c) and Lemma \ref{lem:7.3}.b give the stronger conclusion.
\end{proof}

We note that Theorem \ref{thm:7.2}.b proves Conjecture \ref{conj:B}.a for all the cases
listed in \cite[Theorem 5.4]{SolKL} or just before Theorem \ref{thm:E}. Similarly
Theorem \ref{thm:7.2}.c proves a part of Conjecture \ref{conj:B}.b.
\vspace{5mm}

\textbf{Acknowledgement.}
We thank the referee for his or her careful report, and especially for pointing out 
a problem in an earlier version of this paper.

\end{document}